\documentclass[12pt,a4paper]{amsart}

\usepackage[top=35mm, bottom=35mm, left=30mm, right=30mm]{geometry}
\usepackage[colorlinks=true,citecolor=blue]{hyperref}
\usepackage{mathptmx}
\usepackage{eucal}
\usepackage{graphicx}
\usepackage{mathrsfs}
\usepackage{amssymb}
\usepackage{amsmath}
\usepackage{amsthm}
\usepackage{amscd}
\usepackage{xcolor}
\usepackage{verbatim}
\usepackage[pagewise]{lineno}
\usepackage{tikz-cd}
\usepackage{pgfplots}
\pgfplotsset{compat=newest}

\theoremstyle{plain}

\newtheorem{thm}{Theorem}[section]
\newtheorem{cor}[thm]{Corollary}
\newtheorem{lem}[thm]{Lemma}
\newtheorem{prop}[thm]{Proposition}

\newtheorem{ques}[thm]{Question}

\theoremstyle{definition}
\newtheorem{defn}[thm]{Definition}
\newtheorem{rem}[thm]{Remark}
\newtheorem{exmp}[thm]{Example}

\numberwithin{equation}{section}

\def\htop{h_{\rm top}}
\def\Ptop{P_{\rm top}}

\newcommand{\N}{\mathbb{N}}
\newcommand{\R}{\mathbb{R}}

\newcommand{\Z}{\mathbb{Z}}

\newcommand{\ep}{\epsilon}

\def \C {\mathcal C}

\numberwithin{equation}{section}



\input xy
\xyoption{all}
\begin{document}
\title[Variational principles of topological pressure for correspondences]
{Variational principles of topological pressure for correspondences} 

\author[T. Wang]{Tao Wang}
\address{T. Wang: LCSM (Ministry of Education), School of Mathematics and Statistics, Hunan Normal University, Changsha, Hunan 410081, P. R. China}
\email{twang@hunnu.edu.cn}

\subjclass[2010]{37F05, 54C60, 37B40, 37D35}

\keywords{variational principle, topological pressure, correspondences, transition probability kernels, extreme points}

\thanks{This work was supported by Natural Science Foundation of Hunan Province (2023JJ40423)}

\maketitle

\begin{abstract}
Recently, Li, Li and Zhang introduced the topological pressure for correspondences and measure-theoretic entropy for transition probability kernels. Building thereon, they established a variational principle for correspondences satisfying the forward expansiveness condition.
In this work, we extend this research by deriving two types of variational principles:
\begin{enumerate}
  \item [(i)] For a class of correspondences, the topological pressure equals the supremum of the measure-theoretic pressures over extreme points of invariant measures.
  \item [(ii)] An abstract variational principle holds for general correspondences without requiring forward expansiveness.
\end{enumerate}
Furthermore, the differentiability and equilibrium states of the topological pressure for correspondences are also investigated.
\end{abstract}

\section{Introduction}

\subsection{Classical variational principle for single-valued continuous maps}

A {\it topological dynamical system} is a pair $(X,f)$ where $X$
is a compact metric space and $f:~X\rightarrow X$ is a continuous self-map.
Similarly, a {\it measure-preserving dynamical system} is a quadruple  $(X,\mathscr{M}(X),\mu,f)$ consisting of a set $X$, a $\sigma$-algebra $\mathscr{M}(X)$ on $X$, and a measure-preserving transformation $f$ on the probability space $(X,\mathscr{M}(X),\mu)$.
Let $\mathcal{P}(X), \mathcal{P}_f(X)$, $\mathcal{P}^e_f(X)$ denote the sets of all Borel probability measures, $f$-invariant Borel probability measures, and $f$-invariant ergodic Borel probability measures on $X$, respectively.
Let $(X,f)$ be a topological dynamical system and $\mu\in \mathcal{P}_f(X)$ be an $f$-invariant Borel probability measure, the system $(X,f)$ naturally induces a measure-preserving dynamical system $(X,\mathscr{B}(X),\mu,f)$, where $\mathscr{B}(X)$ refers to the Borel $\sigma$-algebra on $X$. For a real-valued continuous potential function $\varphi$ on $X$, define
$\Ptop(f,\varphi)$ as the topological pressure of $\varphi$, and
$P_\mu(f,\varphi):=h_\mu(f)+\int\varphi\,d\mu$ as the measure-theoretic pressure of $\varphi$ for $\mu$,
where $h_\mu(f)$ is the measure-theoretic entropy of $\mu$.
The classical variational principle for topological
pressure states that
\begin{equation}\label{eq:classical variational principle}
\Ptop(f,\varphi)=\sup_{\mu\in \mathcal{P}_f(X)}\left\{P_\mu(f,\varphi)\right\}=\sup_{\mu\in \mathcal{P}^e_f(X)}\left\{P_\mu(f,\varphi)\right\}.
\end{equation}
This variational principle was established by Ruelle \cite{Ru73} and  Walters \cite{Wa75}.
An $f$-invariant Borel probability measure that attains the supremum is called an equilibrium state for $f$ and $\varphi$. In particular, if the potential $\varphi\equiv0$, then the equilibrium state is called a measure of maximal entropy.
The variational principle for topological pressure establishes a fundamental connection between ergodic theory and dynamical systems, serving as a cornerstone in multifractal analysis and dimension theory in dynamical systems \cite{KH95,Ke98,Pe97}.

\subsection{Variational principle for correspondences}

A {\it correspondence} $T$ on a compact metric space $X$ is a map from $X$ to the set of all nonempty closed subsets of $X$, such that the graph $\{(x,y)\in X^2:y\in T(x)\}$ is closed in $X^2$. This structure is also termed upper semi-continuous set-valued functions in \cite{KT17}, set-valued maps in \cite{RT18}, and closed relations in \cite{MA99}. As a natural generalization of single-valued continuous maps, correspondences arise extensively in control theory \cite{Po21}, differential games \cite{Pe93}, mathematical economics and game theory \cite{CPMP08}, and among other fields.

Now we recall some fundamental advances in the dynamical systems theory of correspondences. Topologically, foundational contributions include the extension of Poincar\'{e}'s recurrence to correspondences by Aubin, Frankowska, and Lasota \cite{AFL91}. Meanwhile, the notion of topological entropy is also extended to correspondences from different perspectives by many authors \cite{AK21,CP16,KT17,WZZ23,ZZ24}, with significant contributions addressing its properties and estimation. Moreover, several variants of expansiveness and specification properties have been proposed and analyzed  in \cite{CP16,PV17,RT18}.
Measure-theoretically, invariant measures and their equivalent characterizations for correspondences have been systematically investigated in \cite{MA99} (see also \cite{AFL91,Mi95}).

Formulating a rigorous variational principle for the topological entropy of correspondences is highly significant yet poses substantial challenges.
Very recently, Li, Li and Zhang \cite{LLZ23} systematically developed a thermodynamic formalism for correspondences. They first introduced the definitions of topological pressure for correspondences and measure-theoretic entropy for transition probability kernels. Subsequently, they established a variational principle for correspondences satisfying the forward expansiveness condition. Furthermore, the authors constructed a thermodynamic formalism for equilibrium states of correspondences endowed with some strong expansion properties. Finally, these results were applied to holomorphic and anti-holomorphic correspondences. More precisely, Li, Li and Zhang \cite{LLZ23} proved the following variational principle:

\begin{thm}\label{thm:variational-principle-correspondence}\cite[Theorem A]{LLZ23}.
Let $(X, d)$ be a compact metric space, $T$ be a forward expansive correspondence on $X$, and $\phi: \mathcal{O}_2(T) \rightarrow \mathbb{R}$ be a continuous function. Then the variational principle holds:
\[
\Ptop(T, \phi) = \sup_{\mathcal{Q},\, \mu} \left\{ h_\mu (\mathcal{Q}) + \int_X \int_{T(x_1)} \phi (x_1, x_2) \, d\mathcal{Q}_{x_1}(x_2) \, d\mu (x_1) \right\},
\]
where the supremum is taken over all pairs $(\mathcal{Q},\mu)$ such that:
\begin{itemize}
  \item $\mathcal{Q}$ is a transition probability kernel on $X$ supported by $T$, and
  \item $\mu$ is a $\mathcal{Q}$-invariant Borel probability measures on $X$.
\end{itemize}
Furthermore, this supremum can be attained at some pair $(\mathcal{Q}, \mu)$.   \end{thm}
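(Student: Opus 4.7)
\emph{The plan is to adapt the classical two-sided Misiurewicz argument for topological pressure to the correspondence setting, exploiting forward expansiveness to substitute for the local injectivity that a single-valued map would normally supply.}

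\textbf{Upper bound.} I would first fix a valid pair $(\mathcal{Q},\mu)$ and a finite Borel partition $\xi=\{A_1,\ldots,A_k\}$ of $X$ whose atoms have diameter strictly smaller than the forward expansive constant $\varepsilon$ of $T$. Lifting $\xi$ through the iterated kernel yields a partition $\xi^{(n)}$ of the admissible orbit space $\mathcal{O}_n(T)$, and selecting one representative orbit per atom of $\xi^{(n)}$ produces an $(n,\varepsilon)$-spanning subset $F_n\subset\mathcal{O}_n(T)$ — forward expansiveness is precisely what guarantees the required uniform diameter control along admissible $T$-orbits. The elementary inequality $\sum_i t_i(a_i-\log t_i)\leq\log\sum_i e^{a_i}$, applied atom by atom and combined with the standard trick of refining $\xi$ near its boundary, then gives
\[
H_\mu\bigl(\xi^{(n)}\bigr)+\int_{\mathcal{O}_n(T)} S_n\phi\,d(\mu,\mathcal{Q})\;\leq\;\log\!\sum_{\mathbf{x}\in F_n}e^{S_n\phi(\mathbf{x})}+o(n),
\]
so that dividing by $n$ and passing to the limit yields $h_\mu(\mathcal{Q})+\int\phi\,d\mu\leq\Ptop(T,\phi)$.

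\textbf{Lower bound and attainment.} For the reverse inequality I would use the Katok/Misiurewicz empirical-measure construction. Fix $\varepsilon$ smaller than the expansive constant, pick a maximal $(n,\varepsilon)$-separated set $E_n\subset\mathcal{O}_n(T)$ nearly realizing the pressure with the weights $e^{S_n\phi}$, and set
\[
\sigma_n=\frac{1}{Z_n}\sum_{\mathbf{x}\in E_n}e^{S_n\phi(\mathbf{x})}\delta_{\mathbf{x}},\qquad \mu_n=\frac{1}{n}\sum_{k=0}^{n-1}(P_k)_*\sigma_n,
\]
where $P_k$ denotes projection onto the $k$-th coordinate of $\mathcal{O}_n(T)$. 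Along a weak-$*$ convergent subsequence of $(\mu_n)$ with limit $\mu$, I would simultaneously extract a limiting transition probability kernel $\mathcal{Q}$ from the disintegrations of $\sigma_n$ along $P_0$, using the compactness of the space of probability kernels together with the closedness of $\mathrm{graph}(T)$ to ensure $\mathcal{Q}$ is supported by $T$ and $\mu$ is $\mathcal{Q}$-invariant. The Misiurewicz entropy-averaging inequality applied to $\sigma_n$ then transfers the bound $H_{\sigma_n}(\xi^{(n)})/n\geq\Ptop(T,\phi)-o(1)$ to the limit, producing the sought pair and in particular establishing attainment.

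\textbf{Main obstacle.} The delicate step — where the correspondence case genuinely departs from the single-valued one — is the upper semicontinuity needed to push the Misiurewicz lower bound through the weak-$*$ limit. Classically this rests on $\mu$-nullity of partition boundaries combined with forward expansiveness forcing any small-diameter Borel partition to act as a generator. Here $\xi^{(n)}$ must be built from the \emph{kernel} $\mathcal{Q}$ via iterated products of conditional measures $\mathcal{Q}_{x_1}\otimes\mathcal{Q}_{x_2}\otimes\cdots$, and one must verify that this disintegration construction is compatible with the joint weak-$*$ convergence of $\mu_n$ and of the kernels obtained from $\sigma_n$. Managing this interaction between disintegration and weak-$*$ convergence — and showing that a fixed partition of diameter below $\varepsilon$ really is a generator for every admissible limit kernel — is where the bulk of the analytic work concentrates, and where forward expansiveness earns its keep.
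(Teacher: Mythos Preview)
This theorem is not proved in the present paper at all: it is quoted verbatim as \cite[Theorem~A]{LLZ23} and used as background. So there is no ``paper's own proof'' to compare your proposal against.

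That said, the route taken in \cite{LLZ23}---and the one the machinery of this paper is built around---is quite different from yours. Rather than run a Misiurewicz argument directly on $\mathcal{O}_n(T)$, one reduces everything to the shift $(\mathcal{O}_\omega(T),\sigma)$: Lemma~\ref{lem:connection of two pressure} gives $\Ptop(T,\phi)=\Ptop(\sigma,\tilde\phi)$, the classical variational principle applies to $\sigma$, and then the dictionary of Lemma~\ref{lem:invariant-measure-characterization-correspondence} (together with \cite[Lemma~6.16]{LLZ23} and the identity $h_{\mu\mathcal{Q}^\omega}(\sigma)=h_\mu(\mathcal{Q})$) translates $\sigma$-invariant measures on the orbit space into pairs $(\mathcal{Q},\mu)$ and back. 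Forward expansiveness enters only to guarantee that the entropy map on $\mathcal{P}_\sigma(\mathcal{O}_\omega(T))$ is upper semicontinuous, which yields attainment. This is much shorter and avoids all the kernel-convergence issues you flag.

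Your direct approach has a concrete gap in the lower-bound step. You propose to ``extract a limiting transition probability kernel $\mathcal{Q}$ from the disintegrations of $\sigma_n$ along $P_0$, using the compactness of the space of probability kernels''. But there is no such compactness: the set of measurable maps $X\to\mathcal{P}(X)$ carries no natural compact topology in which limits remain measurable and supported by $T$. The way around this---and this is exactly what \cite[Proposition~A.11]{LLZ23} and Lemma~\ref{lem:invariant-measure-characterization-correspondence} encode---is to reverse the order of operations: first pass to a weak-$*$ limit of the \emph{measures} on $\mathcal{O}_2(T)$ (or on $\mathcal{O}_\omega(T)$), which \emph{is} compact, and only then disintegrate the limit measure to obtain $\mathcal{Q}$. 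Once you do that, you are essentially back to the shift-space argument anyway.
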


Indeed, the variational principle in Theorem \ref{thm:variational-principle-correspondence}
can be reformulated in terms of invariant measures for correspondences as follows:
\begin{equation}\label{eq:variational principle2 for correspondence}
\Ptop(T, \phi) = \sup_{\mu \in \mathcal{P}_T(X)} \left\{ P_\mu (T,\phi) \right\},
\end{equation}
where $\mathcal{P}_T(X)$ is the set of all $T$-invariant Borel probability measures on $X$ and $P_\mu (T,\phi)$ is the measure-theoretic pressure of $\phi$ for $\mu$ (see Theorem \ref{thm:restatement of variational principle for correspondence} for more details).

\subsection{Our work}
Inspired by the classical variational principle for topological pressure \eqref{eq:classical variational principle}, we aim to investigate whether the aforementioned variational principle \eqref{eq:variational principle2 for correspondence} remains valid when replacing the set \(\mathcal{P}_T(X)\) of \(T\)-invariant probability measures with \(\mathcal{P}^e_T(X)\), the collection of all extreme points of the compact convex set \(\mathcal{P}_T(X)\) (see Remark \ref{rem:remark of invariant measure for correspondences} for more details). That is, we pose the following fundamental problem:

\begin{ques}\label{ques:1}
Let $(X, d)$ be a compact metric space, $T$ be in a specified class of correspondences on $X$ and $\phi: \mathcal{O}_2(T) \rightarrow \mathbb{R}$ be a continuous function. Does the topological pressure satisfy
\[
\Ptop(T, \phi) = \sup_{\mu \in \mathcal{P}^e_T(X)} \left\{ P_\mu (T,\phi) \right\}?
\]
where $\mathcal{P}^e_T(X)$ is the set of all extreme points of the compact convex set \(\mathcal{P}_T(X)\).
\end{ques}

Since the classical variational principle for topological pressure \eqref{eq:classical variational principle} holds for general topological dynamical systems, a natural question arises:
Can a variational principle for topological pressure be established for general correspondences?

\begin{ques}\label{ques:2}
Let $(X, d)$ be a compact metric space, $T$ be a correspondences on $X$ and $\phi: \mathcal{O}_2(T) \rightarrow \mathbb{R}$ be a continuous function. How can we define an appropriate quantity $\mathfrak{h}_\mu(\mathcal{Q})$ such that
\[
\Ptop(T, \phi) = \sup_{\mathcal{Q},\, \mu} \left\{ \mathfrak{h}_\mu (\mathcal{Q}) + \int_X \int_{T(x_1)} \phi (x_1, x_2) \, d\mathcal{Q}_{x_1}(x_2) \, d\mu (x_1) \right\},
\]
where $\mathcal{Q}$ ranges over all transition probability kernels on $X$ supported by $T$, and $\mu$ ranges over all $\mathcal{Q}$-invariant Borel probability measures on $X$.
\end{ques}

In the present paper, we address the aforementioned questions by establishing two classes of variational principles for correspondences:
\begin{enumerate}
  \item [(i)] For a class of correspondences, the topological pressure satisfies
  \[
  \Ptop(T, \phi) = \sup_{\mu \in \mathcal{P}^e_T(X)} \left\{ P_\mu (T,\phi) \right\},
  \]
  where $\mathcal{P}^e_T(X)$ is the set of all extreme points of the compact convex set \(\mathcal{P}_T(X)\).
  \item [(ii)] An abstract variational principle holds for general correspondences without the forward expansiveness hypothesis. Specifically, we introduce a quantity $\mathfrak{h}_\mu(\mathcal{Q})$ such that
  \[
  \Ptop(T, \phi) = \max_{\mathcal{Q}, \mu} \left\{ \mathfrak{h}_\mu (\mathcal{Q}) + \int_X \int_{T(x_1)} \phi (x_1, x_2) \, d\mathcal{Q}_{x_1}(x_2) \, d\mu (x_1) \right\},
  \]
  where $\mathcal{Q}$ ranges over all transition probability kernels on $X$ supported by $T$, and $\mu$ ranges over all Borel probability measures or $\mathcal{Q}$-invariant Borel probability measures on $X$.
\end{enumerate}
Additionally, we investigate the differentiability of the topological pressure and characterize its equilibrium states.

\section{Preliminary}

This section reviews essential foundations for our analysis: basic notations, the definitions of correspondences and transition probability kernels, topological pressure for correspondences, and measure-theoretic entropy for transition probability kernels. All these definitions presented here are explicitly drawn from \cite{LLZ23}.

\subsection{Basic notations}

In this subsection, we introduce some basic notations to be used throughout this paper.

Let $\N= \{1, 2, 3, \dots\}$, $\N_0= \{0, 1, 2, 3, \dots\}$ and
$\hat{\N}= \N\cup\{\omega\}$. Here $\omega$ is the least infinite ordinal.
Let $X$ be a set and $n\in\N$. Define the reversal $\gamma_n:X^n\to X^n$ by
\[
\gamma_n(x_1,\ldots,x_n):=(x_n,\ldots,x_1) \text{ for all }(x_1,\ldots,x_n)\in X^n.
\]
Denote by $\mathscr{M}(X)$ a $\sigma$-algebra on $X$.


Let \(X\) be a compact metric space. We denote by
\begin{itemize}
  \item $\mathscr{B}(X)$ the (completed) Borel $\sigma$-algebra on $X$,
  \item $\mathcal{P}(X)$ the set of (completed) Borel probability measures on $X$,
  \item \(\mathcal{F}(X)\) the set of all non-empty closed subsets of \(X\), and
  \item $C(X)$ the space of real-valued continuous functions on $X$.
\end{itemize}

Let $X$ be a compact metric space with the metric $d$ and $T: X\to \mathcal{F}(X)$ be a map. For any $A\subset X$, define $T(A):=\bigcup_{x\in A}T(x)$. For $n\in\N$, define $T^{n}(A)$ inductively on $n$ with $T^1(A):=T(A)$ and $T^{n+1}(A):=T(T^n(A))$. Moreover, define $T^{-1}(A):=\{x\in X:~T(x)\cap A\neq\emptyset\}$. For $n\in\N$, define $T^{-n}(A)$ inductively on $n$ with $T^{-(n+1)}(A):=T^{-1}(T^{-n}(A))$.
For each $n\in \Z\setminus\{0\}$ and each $x\in X$, write $T^n(x):=T^{n}(\{x\})$. For a subset $Y\subset X$ and $x\in X$, define $T|_Y(x):=T(x)\cap Y$.

For each $n\in \N$, equip the product space $X^n:=\{(x_1,\ldots,x_n):x_i \in X, i=1,\ldots,n\}$ with the metric $d_n$ given by
\[
d_n((x_1,\dots,x_n),(y_1,\dots,y_n))=\max_{1\leq i\leq n}d(x_i,y_i)
\]
for all $(x_1,\dots,x_n), (y_1,\dots,y_n)\in X^n$.
Similarly, equip the product space
$X^\omega=\{(x_1,x_2,\dots): x_i \in X$ for all $i\in \N \}$ with the metric
$d_\infty$ given by
\[
d_\omega((x_1,x_2,\dots),(y_1,y_2,\dots))
=\sum_{i=1}^{\infty}\frac{d(x_i,y_i)}{2^i(1+d(x_i,y_i))}
\]
for all $(x_1,x_2,\dots),(y_1,y_2,\dots)\in X^\omega$. For each $n\in\hat{\N}$, the topology of $X^n$ induced by the metric $d_n$
is the product topology.

For each $n\in \N$, write
\begin{equation*}
\mathcal{O}_n(T)=\{(x_1,\dots,x_n)\in X^n: x_{i+1}\in T(x_i)
\text{ for each }i=1,\ldots,n-1\}.
\end{equation*}
The {\it orbit space} $\mathcal{O}_\omega(T)$ induced by $T$ is given by
\begin{equation*}
\mathcal{O}_\omega(T)= \{(x_1, x_2, \dots)\in X^\omega: x_{i+1}\in T(x_i)
\text{ for each } i\in \N\}.
\end{equation*}
For each $n\in\hat{\N}$, we call an element in $\mathcal{O}_n(T)$ an {\it orbit}. A sequence of orbits
$\underline{x}^{(n)}=(x_1^{(n)}, x_2^{(n)}, \ldots)$ in $X^\omega$
converges to an orbit $\underline{x}=(x_1, x_2, \ldots)\in X^\omega$
if and only if
$x_i^{(n)}$ converges to $x_i$ as $n\to +\infty$ for each $i\in \N$.

Let $\phi:\mathcal{O}_2(T)\to \R$ be a continuous function. Define
$\tilde{\phi}:\mathcal{O}_\omega(T)\to \R$ as follows:
\begin{equation}\label{eq:tilde of phi}
\tilde{\phi}(x_1,x_2,\ldots):=\phi(x_1,x_2).
\end{equation}
So $\tilde{\phi}$ is a continuous function on $\mathcal{O}_\omega(T)$.

Denote by $\tilde{\pi}_1, \tilde{\pi}_2: \bigcup_{n\in\hat{\N}\setminus\{1\}}X^n\to X$, and $\tilde{\pi}_{12}:\bigcup_{n\in\hat{\N}\setminus\{1\}}X^n\to X^2$ the projection maps given by
\[
\tilde{\pi}_{1}(x_n)_n=x_1, ~\tilde{\pi}_{2}(x_n)_n=x_2, ~\tilde{\pi}_{12}(x_n)_n=(x_1,x_2),
\]
respectively. Let $X$ be a compact metric space. If $\mu$ is a Borel probability measure on $X^n$ for some $n\in\hat{\N}\setminus\{1\}$, then $\mu\circ\tilde{\pi}_{12}^{-1}$ refers to a Borel probability measure on $X^2$ given by
$\mu\circ\tilde{\pi}_{12}^{-1}(A):=\mu(\tilde{\pi}_{12}^{-1}(A))$ for all $A\in \mathscr{B}(X^2)$, and $\mu\circ\tilde{\pi}_{i}^{-1}$ refers to a Borel probability measure on $X$ given by
$\mu\circ\tilde{\pi}_{i}^{-1}(A):=\mu(\tilde{\pi}_{i}^{-1}(A))$ for all $A\in \mathscr{B}(X)$ where $i=1,2$.

\subsection{Correspondences}

In this subsection, we state the definition of correspondences on compact metric spaces.

\begin{defn}
Let \((X,d)\) be a compact metric space. A map \( T: X \to \mathcal{F}(X) \) is called a \textit{correspondence} on \(X\) if for any \(x \in X\) and any open neighborhood \(U\) of \(T(x)\), there exists an open neighborhood \(V\) of \(x\) such that \(T(y) \subset U\) for all \(y \in V\).
\end{defn}

\begin{rem}
We provide several remarks.
\begin{enumerate}
  \item [(i)] By \cite[Theorems 1, 2, 3]{IM06}, a map \( T: X \to \mathcal{F}(X) \) is a correspondence if and only if the following two equivalent conditions hold:
      \begin{enumerate}
        \item The graph \(\mathcal{O}_2(T) = \{(x_1, x_2) \in X^2 : x_2 \in T(x_1)\}\) is closed in \(X^2\).
        \item \(\mathcal{O}_n(T)\) is closed in \(X^n\) for all \(n \in \mathbb{\widehat{N}}\).
      \end{enumerate}
  \item [(ii)] Let $T$ be a correspondence on a compact metric space $(X, d)$. If \(Y\subset X\) is a closed subset, then \( T|_Y: Y \to \mathcal{F}(Y) \) is a correspondence on \(Y\), where
      \(T|_Y(x)=T(x)\cap Y\) for each \(x\in Y\).
  \item [(iii)] Let $T$ be a correspondence on a compact metric space $(X, d)$. Recall
  \[
  T^{-1}(x)=\{y\in X:~x\in T(y)\} \text{ for all } x\in X.
  \]
  It follows from \cite[Lemma 4.4]{LLZ23} that if $T$ be a correspondence on $X$ satisfying $T(X)=X$, then so is $T^{-1}$.
\end{enumerate}
\end{rem}

Next we review the the concept of topological conjugacy between correspondences (see \cite{KT17}).

\begin{defn}
Let $T$ be a correspondence on a compact metric space $X$, and $S$ be a correspondence on a compact metric space $Y$. The correspondences $T$ and $S$ are said to be {\it topological conjugate} if there exists a homeomorphism $\theta: X\to Y$ such that $S\circ\theta= \theta\circ T$. In this case, $\theta$ is called a topological conjugacy between $T$ and $S$.
\end{defn}

Let $\theta: X\to Y$ be a map. For each $n\in\N$, define the map $\theta^{(n)}: X^n\to Y^n$ as
\[
\theta^{(n)}(x_1,\ldots,x_n)=(\theta(x_1),\ldots,\theta(x_n))
\text{ for all }(x_1,\ldots,x_n)\in\mathcal{O}_n(T).
\]
Moreover, define $\theta^{(\omega)}: \mathcal{O}_\omega(T)\to \mathcal{O}_\omega(S)$
as
\[
\theta^{(\omega)}(x_1,x_2,\ldots):=(\theta(x_1),\theta(x_2),\ldots)
\text{ for all }(x_1,x_2,\ldots)\in\mathcal{O}_\omega(T).
\]
It is not difficult to verify that if $\theta$ is continuous, then $\theta^{(n)}$ is continuous for each $n\in\hat{\N}$.

\subsection{Topological pressure for correspondences}

In this subsection, we recall the definition of topological pressure for correspondences introduced in \cite{LLZ23}.

Given a compact metric space \((X,d)\) and \(\epsilon>0\), we say that \(E\subset X\) is \(\epsilon\)-separated if for each pair of distinct
points \(x,y\in E\), we have \(d(x,y)\geq \epsilon\). We say that \(F\subset X\) is \(\epsilon\)-spanning if for each \(x\in X\) there exists \(y\in F\) such that \(d(x,y)< \epsilon\). For each continuous function \(\varphi: X\to \R\) and each \(\delta>0\), set \(\Delta(\varphi,\delta):=\sup\{|\varphi(x)-\varphi(y)|: x,y\in X \text { and } d(x,y)<\delta\}\), and $\|\varphi\|_\infty:=\sup\{|\varphi(x)|: x\in X\}$.

Let $T$ be a correspondence on a compact metric space $(X, d)$ and $\phi: \mathcal{O}_2(T) \rightarrow \mathbb{R}$ be a continuous function. For each
\(n\in\N\), the function \(S_n\phi: \mathcal{O}_{n+1}(T) \rightarrow \mathbb{R}\) is given by
\[
S_n\phi(x_1,\ldots,x_{n+1}) := \sum_{i=1}^{n} \phi(x_i, x_{i+1}).
\]
For each $n \in \mathbb{N}$ and $\epsilon > 0$, define
\[
s_n(T,\phi,\epsilon):= \sup\left\{ \sum_{x \in E} e^{S_n\phi(x)} : E \text{ is an $\epsilon$-separated subset of } \mathcal{O}_{n+1}(T) \right\},
\]
and
\[
r_n(T,\phi,\epsilon):= \inf\left\{ \sum_{x \in F} e^{S_n\phi(x)} : F \text{ is an $\epsilon$-spanning subset of } \mathcal{O}_{n+1}(T) \right\}.
\]

\begin{defn}
Let $ T $ be a correspondence on a compact metric space $ (X, d) $ and $ \phi: \mathcal{O}_2(T) \to \mathbb{R} $ be a continuous function. The \textit{ topological pressure} $ \Ptop(T, \phi) $ is defined as
\begin{align*}
\Ptop(T, \phi) &:= \lim_{\epsilon \to 0^+} \limsup_{n \to +\infty} \frac{1}{n} \log \left( \sup_{E_n(\epsilon)} \sum_{\underline{x} \in E_n(\epsilon)} e^{S_n\phi(x)} \right) \notag \\
&= \lim_{\epsilon \to 0^+} \limsup_{n \to +\infty} \frac{1}{n} \log \left( \inf_{F_n(\epsilon)} \sum_{\underline{x} \in F_n(\epsilon)} e^{S_n\phi(x)} \right),
\end{align*}
where $ E_n(\epsilon) $ ranges over all $\epsilon$-separated subsets of $ (\mathcal{O}_{n+1}(T), d_{n+1}) $ and $ F_n(\epsilon) $ ranges over all $\varepsilon$-spanning subsets of $ (\mathcal{O}_{n+1}(T), d_{n+1}) $.

In particular, if $ \phi \equiv 0 $, we call $ \Ptop(T, 0) $ the \textit{topological entropy} of $ T $ and denote it by $ \htop(T) $.
\end{defn}

\begin{rem}\
\begin{enumerate}
  \item[(i)] The above definition of topological pressure for correspondences is well-defined (see \cite[Definition 4.6]{LLZ23}).
  \item[(ii)] By \cite[Remark 4.7]{LLZ23} we know that $-\infty<P(T, \phi)\leq+\infty$.
\end{enumerate}
\end{rem}

If $T$ is a correspondence on a compact metric space $(X, d)$, then $(X^\omega,\sigma)$ is a topological dynamical system, where
$X^\omega$ is equipped with the metric $d_\omega$ and
$\sigma: X^\omega\to X^\omega$ is the shift map given by
\[
\sigma(x_1,x_2,x_3,\ldots):(x_2,x_3,\ldots)\text{ for all }(x_1,x_2,x_3,\ldots)\in X^\omega.
\]
Since $\mathcal{O}_\omega(T)$ is closed in $X^\omega$, $(\mathcal{O}_\omega(T),\sigma)$ is a topological dynamical subsystem.
Now we recall a characterization of the topological pressure for correspondences via the topological pressure of the system $(\mathcal{O}_\omega(T),\sigma)$.

\begin{lem}\label{lem:connection of two pressure}
Let $T$ be a correspondence on a compact metric space $(X, d)$ and $\phi: \mathcal{O}_2(T) \rightarrow \mathbb{R}$ be a continuous function. Then
\[
\Ptop(T,\phi)=\Ptop(\sigma,\tilde{\phi}),
\]
where $\Ptop(\sigma,\tilde{\phi})$ refers to the classical topological pressure of the dynamical system $(\mathcal{O}_\omega(T),\sigma)$ with the potential function $\tilde{\phi}$ given in \eqref{eq:tilde of phi} (see \cite{Wa82} for the definition of the classical topological pressure).
\end{lem}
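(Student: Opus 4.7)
The plan is to prove the equality by a two-sided sandwich, directly comparing the separated-set quantities defining each pressure. First I would observe that the Birkhoff sums align: since $\tilde{\phi}(\sigma^{i}\underline{x})=\phi(x_{i+1},x_{i+2})$, one has $\sum_{i=0}^{n-1}\tilde{\phi}(\sigma^{i}\underline{x})=S_{n}\phi(x_{1},\dots,x_{n+1})$, so summands match identically under the continuous surjective projection $\pi_{N}:\mathcal{O}_\omega(T)\to \mathcal{O}_{N}(T)$, $(x_{1},x_{2},\dots)\mapsto (x_{1},\dots,x_{N})$; surjectivity uses only that $T(x)\neq\emptyset$ for every $x\in X$.

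Next I would establish two quantitative comparisons between the Bowen metric $d_{n,\omega}(\underline{x},\underline{y}):=\max_{0\leq i\leq n-1}d_{\omega}(\sigma^{i}\underline{x},\sigma^{i}\underline{y})$ on $\mathcal{O}_{\omega}(T)$ and the metric $d_{n+1}$ on $\mathcal{O}_{n+1}(T)$. Writing $D:=\diam(X,d)$, the pointwise lower bound $d_{\omega}(\sigma^{i}\underline{x},\sigma^{i}\underline{y})\geq \frac{d(x_{i+1},y_{i+1})}{2(1+D)}$, together with its $j=2$ variant at $i=n-1$ for the coordinate $x_{n+1}$, yields
\[
d_{n+1}(\pi_{n+1}\underline{x},\pi_{n+1}\underline{y})\leq 4(1+D)\, d_{n,\omega}(\underline{x},\underline{y}).
\]
Conversely, splitting the series defining $d_{\omega}(\sigma^{i}\underline{x},\sigma^{i}\underline{y})$ at a cutoff and estimating the tail by a geometric series, I would show that for every $\eta>0$ there exist $\delta>0$ and $k\in\N$ (depending only on $\eta$ and $D$) such that
\[
d_{n+k}(\pi_{n+k}\underline{x},\pi_{n+k}\underline{y})<\delta \ \Longrightarrow\ d_{n,\omega}(\underline{x},\underline{y})<\eta.
\]

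These two estimates translate into the required pressure inequalities. For $\Ptop(T,\phi)\leq \Ptop(\sigma,\tilde{\phi})$, I take a $\delta$-separated set $E\subset (\mathcal{O}_{n+1}(T),d_{n+1})$, lift each point to an orbit in $\mathcal{O}_{\omega}(T)$; the first estimate makes the lift $\delta/[4(1+D)]$-separated in $(\mathcal{O}_{\omega}(T),d_{n,\omega})$, and matching of Birkhoff sums dominates the separated-set sum for $T$ at scale $\delta$ by that for $\sigma$ at scale $\delta/[4(1+D)]$. For the reverse direction, I take an $\eta$-separated $\tilde{E}\subset (\mathcal{O}_{\omega}(T),d_{n,\omega})$ and apply the contrapositive of the second estimate: $\pi_{n+k}$ is injective on $\tilde{E}$ with $\delta$-separated image in $(\mathcal{O}_{n+k}(T),d_{n+k})$. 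The Birkhoff-sum mismatch $|S_{n+k-1}\phi-S_{n}\phi|\leq (k-1)\|\phi\|_{\infty}$ is absorbed into a constant factor $e^{(k-1)\|\phi\|_{\infty}}$ that disappears after $\frac{1}{n}\log$ and $n\to\infty$ since $k$ is fixed once $\eta$ is fixed; reindexing $m=n+k-1$ leaves the $\limsup$ unchanged. Sending $\eta\to 0^{+}$ (hence $\delta\to 0^{+}$) closes the sandwich. The analogous argument for spanning sets proceeds in parallel.

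The main obstacle is the reverse direction: $d_{n,\omega}$ incorporates information from infinitely many coordinates of $\underline{x}$, whereas $d_{n+1}$ only sees the first $n+1$. The remedy is to project to length $n+k$ with $k$ depending only on $\eta$, so that the tail of the series defining $d_{\omega}$ is suppressed below $\eta$, coupled with the asymptotic invisibility of lengthening a Birkhoff sum by a bounded amount to $\frac{1}{n}\log$.
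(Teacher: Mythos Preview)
The paper does not supply its own proof of this lemma; it is stated as a known fact drawn from \cite{LLZ23} and then used as a tool. So there is nothing to compare against on the paper's side.

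Your argument is correct. The Birkhoff-sum identity is exact, the surjectivity of $\pi_{N}$ follows from $T(x)\neq\emptyset$, and both metric comparisons are valid: the first comes from keeping only the $j=1$ (and $j=2$ at the last step) term of the series defining $d_{\omega}$, and the second from choosing $k$ so that the tail $\sum_{j>k}2^{-j}<\eta/2$ and then $\delta<\eta/2$. The transfer of separated sets in each direction then goes through as you describe, and the bounded offset $(k-1)\|\phi\|_{\infty}$ indeed vanishes after taking $\frac{1}{n}\log$ and $n\to\infty$. One minor point worth making explicit in a written version: in the reverse direction, after you obtain $\limsup_{n}\frac{1}{n}\log s_{n}(\sigma,\tilde{\phi},\eta)\leq \limsup_{m}\frac{1}{m}\log s_{m}(T,\phi,\delta)$ for the specific $\delta=\delta(\eta)$, you should bound the right-hand side by $\Ptop(T,\phi)$ before sending $\eta\to 0$, rather than trying to make $\delta\to 0$ simultaneously; this is harmless since the $\delta$-separated pressure is monotone in $\delta$.
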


The following basic properties of topological pressure for correspondences
are standard, which can be obtained from the well-known properties of topological pressure for single-valued continuous maps (see \cite[Theorem 9.7]{Wa82}) alongside Lemma \ref{lem:connection of two pressure}.

\begin{lem}\label{lem:basic properties of topological pressure for correspondences}
Let $T$ be a correspondence on a compact metric space $(X, d)$.
For any continuous functions $\phi, \varphi: \mathcal{O}_2(T) \rightarrow \mathbb{R}$, we have
\begin{enumerate}
  \item[(i)]
  $\Ptop(T,\phi)+\inf\varphi\leq\Ptop(T,\phi+\varphi)
  \leq\Ptop(T,\phi)+\sup\varphi$.
  In particular,
  \[
  \phi\leq\varphi \Rightarrow \Ptop(T,\phi)\leq \Ptop(T,\varphi).
  \]
  \item[(ii)] $\Ptop(T,\phi+c)=\Ptop(T,\phi)+c$, $\forall c\in \R$.
  \item[(iii)] $\Ptop(T,t\phi+(1-t)\varphi)\leq t \Ptop(T,\phi)+(1-t)\Ptop(T,\varphi)$, $\forall t\in [0,1]$.
  \item[(iv)] $\Ptop(T,\phi)=\Ptop(T,\phi+\psi\circ\tilde{\pi}_1-\psi\circ\tilde{\pi}_2)$, $\forall \psi\in C(X)$.
\end{enumerate}
\end{lem}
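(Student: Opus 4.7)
The plan is to reduce each of (i)--(iv) to the corresponding classical property of topological pressure for the single-valued system $(\mathcal{O}_\omega(T),\sigma)$ via Lemma~\ref{lem:connection of two pressure}, which identifies $\Ptop(T,\phi)$ with $\Ptop(\sigma,\tilde\phi)$. The key elementary observations are that the lifting map $\phi\mapsto\tilde\phi$ from $C(\mathcal{O}_2(T))$ to $C(\mathcal{O}_\omega(T))$ is $\R$-linear and order-preserving, that $\widetilde{\phi+c}=\tilde\phi+c$ for $c\in\R$, and that $\inf\tilde\phi=\inf\phi$, $\sup\tilde\phi=\sup\phi$ (both extrema are attained on the closed subset $\mathcal{O}_\omega(T)\subset X^\omega$, and the first two coordinates of an orbit exhaust $\mathcal{O}_2(T)$).

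For (i)--(iii) the argument is then a direct transfer. By Walters~\cite[Theorem 9.7]{Wa82}, the classical topological pressure of $\sigma$ on $(\mathcal{O}_\omega(T),\sigma)$ satisfies the analogues of (i)--(iii) with $\phi,\varphi$ replaced by $\tilde\phi,\tilde\varphi$. Combining with the bullet list of identities above and Lemma~\ref{lem:connection of two pressure} applied to both sides yields (i)--(iii) verbatim. For instance, for (iii), linearity of the tilde map gives $\widetilde{t\phi+(1-t)\varphi}=t\tilde\phi+(1-t)\tilde\varphi$, so convexity of $\Ptop(\sigma,\cdot)$ on $C(\mathcal{O}_\omega(T))$ transfers to convexity of $\Ptop(T,\cdot)$ on $C(\mathcal{O}_2(T))$.

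Property (iv) is the only one requiring a small additional observation: the expression $\psi\circ\tilde\pi_1-\psi\circ\tilde\pi_2$ on $\mathcal{O}_2(T)$ should be recognized as a coboundary after lifting. Given $\psi\in C(X)$, define $\Psi\in C(\mathcal{O}_\omega(T))$ by $\Psi(x_1,x_2,\ldots):=\psi(x_1)$. Then
\[
\widetilde{(\psi\circ\tilde\pi_1-\psi\circ\tilde\pi_2)}(x_1,x_2,\ldots)
=\psi(x_1)-\psi(x_2)
=\Psi(x_1,x_2,\ldots)-\Psi(\sigma(x_1,x_2,\ldots)),
\]
so the lifted potential $\tilde\phi+\widetilde{\psi\circ\tilde\pi_1-\psi\circ\tilde\pi_2}$ equals $\tilde\phi+\Psi-\Psi\circ\sigma$. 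The classical coboundary invariance of topological pressure (see \cite[Theorem 9.7(v)]{Wa82}) gives $\Ptop(\sigma,\tilde\phi+\Psi-\Psi\circ\sigma)=\Ptop(\sigma,\tilde\phi)$, and a second application of Lemma~\ref{lem:connection of two pressure} yields (iv).

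There is no substantive obstacle here; the lemma is essentially a bookkeeping corollary of Lemma~\ref{lem:connection of two pressure}. The only point meriting mild care is in (iv), where one must ensure that the coboundary candidate $\Psi$ is produced from a function on $X$ (not merely on $X^\omega$) so that the identification with $\psi\circ\tilde\pi_1-\psi\circ\tilde\pi_2$ on $\mathcal{O}_2(T)$ is literal; the definition $\Psi(x_1,x_2,\ldots):=\psi(x_1)$ precisely serves this purpose and is automatically continuous on $\mathcal{O}_\omega(T)$.
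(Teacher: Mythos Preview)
Your proposal is correct and follows precisely the approach the paper indicates: the paper does not give a detailed proof but simply states that the properties ``can be obtained from the well-known properties of topological pressure for single-valued continuous maps (see \cite[Theorem 9.7]{Wa82}) alongside Lemma~\ref{lem:connection of two pressure}.'' Your write-up supplies the straightforward details of this transfer, including the coboundary identification $\widetilde{\psi\circ\tilde\pi_1-\psi\circ\tilde\pi_2}=\Psi-\Psi\circ\sigma$ needed for (iv), and is more explicit than the paper itself.
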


We now investigate the behavior of topological pressure for correspondences under topological conjugacy.
\begin{thm}\label{thm:topological pressure under topological conjugacy}
Let $T$ be a correspondence on a compact metric space $X$, $S$ be a correspondence on a compact metric space $Y$, and $\phi: \mathcal{O}_2(S) \rightarrow \mathbb{R}$ be a continuous function. If $T$ and $S$ are topologically conjugate via a homeomorphism $\theta: X\to Y$, then
\[
\Ptop(T,\varphi)=\Ptop(S,\phi),
\]
where $\varphi:=\phi\circ\theta^{(2)}|_{\mathcal{O}_2(T)}$. Especially,
\[
\htop(T)=\htop(S).
\]
\end{thm}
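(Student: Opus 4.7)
The plan is to reduce the theorem to the classical fact that topological pressure is invariant under topological conjugacy for single-valued continuous maps, using Lemma \ref{lem:connection of two pressure} as the bridge. The key step is to promote $\theta$ to a topological conjugacy between the shift systems $(\mathcal{O}_\omega(T),\sigma)$ and $(\mathcal{O}_\omega(S),\sigma)$, after which the potentials transform in the obvious way.

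First, I would verify that $\theta^{(\omega)}$, as defined in the excerpt, is a well-defined homeomorphism from $\mathcal{O}_\omega(T)$ onto $\mathcal{O}_\omega(S)$. If $(x_1,x_2,\ldots)\in\mathcal{O}_\omega(T)$, then $x_{i+1}\in T(x_i)$, and the conjugacy relation $S\circ\theta=\theta\circ T$ (interpreted as equality of set-valued maps) yields $\theta(x_{i+1})\in\theta(T(x_i))=S(\theta(x_i))$, so $\theta^{(\omega)}(x_1,x_2,\ldots)\in\mathcal{O}_\omega(S)$. Applying the same argument to $\theta^{-1}$, which is a topological conjugacy from $S$ to $T$, shows that $(\theta^{-1})^{(\omega)}$ sends $\mathcal{O}_\omega(S)$ into $\mathcal{O}_\omega(T)$ and is the coordinatewise inverse of $\theta^{(\omega)}$. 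Continuity of $\theta$ and $\theta^{-1}$ then upgrades both induced maps to continuous ones (as noted in the excerpt), so $\theta^{(\omega)}$ is a homeomorphism.

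Next, I would observe the intertwining identity $\theta^{(\omega)}\circ\sigma=\sigma\circ\theta^{(\omega)}$ on $\mathcal{O}_\omega(T)$, which is immediate from the coordinatewise definitions of $\sigma$ and $\theta^{(\omega)}$. This promotes $\theta^{(\omega)}$ to a topological conjugacy between $(\mathcal{O}_\omega(T),\sigma)$ and $(\mathcal{O}_\omega(S),\sigma)$. Moreover, for every $(x_1,x_2,\ldots)\in\mathcal{O}_\omega(T)$,
\[
\tilde{\varphi}(x_1,x_2,\ldots)=\varphi(x_1,x_2)=\phi(\theta(x_1),\theta(x_2))=\tilde{\phi}\bigl(\theta^{(\omega)}(x_1,x_2,\ldots)\bigr),
\]
so $\tilde{\varphi}=\tilde{\phi}\circ\theta^{(\omega)}$ on $\mathcal{O}_\omega(T)$.

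Finally, combining Lemma \ref{lem:connection of two pressure} with the classical invariance of topological pressure under topological conjugacy for continuous self-maps (Walters \cite{Wa82}, Theorem 9.7(vii)) gives
\[
\Ptop(T,\varphi)=\Ptop(\sigma,\tilde{\varphi})=\Ptop(\sigma,\tilde{\phi}\circ\theta^{(\omega)})=\Ptop(\sigma,\tilde{\phi})=\Ptop(S,\phi),
\]
and specializing to $\phi\equiv 0$ yields $\htop(T)=\htop(S)$. No single step poses a serious obstacle; the only point that demands care is reading $S\circ\theta=\theta\circ T$ correctly as equality of set-valued maps so that $\theta^{(\omega)}$ is a genuine bijection between orbit spaces and the induced potentials are related by composition.
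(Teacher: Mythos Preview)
Your proposal is correct and follows essentially the same route as the paper: lift $\theta$ to a homeomorphism $\theta^{(\omega)}$ between the orbit spaces, check it intertwines the shifts and that $\tilde{\varphi}=\tilde{\phi}\circ\theta^{(\omega)}$, then invoke Lemma~\ref{lem:connection of two pressure} together with the classical conjugacy-invariance of pressure. The only cosmetic difference is that the paper cites \cite[Theorem~9.8]{Wa82} for the classical step rather than Theorem~9.7.
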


\begin{proof}
Let $\theta^{(\omega)}: \mathcal{O}_\omega(T)\to \mathcal{O}_\omega(S)$
be defined as
\[
\theta^{(\omega)}(x_1,x_2,\ldots):=(\theta(x_1),\theta(x_2),\ldots)
\text{ for all }(x_1,x_2,\ldots)\in\mathcal{O}_\omega(T).
\]
We verify that the map $\theta^{(\omega)}$ is well-defined and that
the following diagram commute.
$$
\begin{CD}
\mathcal{O}_\omega(T) @ > \theta^{(\omega)} >> \mathcal{O}_\omega(S) \\
@V \sigma|_T VV  @ VV \sigma|_S V @. \\
\mathcal{O}_\omega(T) @ > \theta^{(\omega)} >> \mathcal{O}_\omega(S)
\end{CD}
$$

For any sequence $(x_1,x_2,\ldots)\in \mathcal{O}_\omega(T)$, write $y_i=\theta(x_i)$ for $i\in\N$.
Then
\[
y_{i+1}=\theta(x_{i+1})\in \theta(T(x_i))=S(\theta(x_i))=S(y_i).
\]
This implies that $\theta^{(\omega)}(x_1,x_2,\ldots)=(y_1,y_2,\ldots)\in \mathcal{O}_\omega(S)$ and thus $\theta^{(\omega)}$ is well-defined and continuous.
Similarly, we can deduce that $\varphi$ is also well-defined. Moreover,
\[
\sigma|_S\circ\theta^{(\omega)}(x_1,x_2,\ldots)=(y_2,y_3,\ldots)
=\theta^{(\omega)}\circ\sigma|_T(x_1,x_2,\ldots).
\]
So the diagram commute. As $\theta^{(\omega)}$ is a homeomorphism,
the topological dynamical systems $(\mathcal{O}_\omega(T),\sigma|_T)$ and $(\mathcal{O}_\omega(S),\sigma|_S)$ are conjugate.

Note that $\tilde{\varphi}=\tilde{\phi}\circ\theta^{(\omega)}$. By Lemma \ref{lem:connection of two pressure} and \cite[Theorem 9.8]{Wa82} we have
\begin{align*}
\Ptop(T,\varphi)=\Ptop(\sigma|_T,\tilde{\varphi})
=\Ptop(\sigma|_S,\tilde{\phi})
=\Ptop(S,\phi).
\end{align*}
\end{proof}

\subsection{Transition probability kernels}

In this subsection, we recall the definition of transition probability kernels (see \cite{MT12} for more details), which are also called Markovian transition kernels (see \cite{Ga16}).

\begin{defn}
Let $(X, \mathscr{M}(X))$ and $(Y, \mathscr{M}(Y))$ be measurable spaces, where $X$ and $Y$ are sets and $\mathscr{M}(X)$ and $\mathscr{M}(Y)$ are \(\sigma\)-algebras on $X$ and $Y$, respectively. A {\it transition probability kernel from $Y$ to $X$} is a map $\mathcal{Q}: Y \times \mathscr{M}(X) \to [0,1]$ satisfying the following two properties:
\begin{enumerate}
    \item[(i)] For every \(y \in Y\), the map \(\mathscr{M}(X) \ni A \mapsto \mathcal{Q}(y,A)\) is a probability measure on \((X,\mathscr{M}(X))\).
    \item[(ii)] For every \(A \in \mathscr{M}(X)\), the map \(Y \ni y \mapsto \mathcal{Q}(y,A)\) is \(\mathscr{M}(Y)\)-measurable.
\end{enumerate}
For every $y \in Y$, denote by $\mathcal{Q}_y$ the probability measure on \((X,\mathscr{M}(X))\) such that $\mathcal{Q}_y(A) := \mathcal{Q}(y,A)$. If $Y = X$, then we call $\mathcal{Q}$ a {\it transition probability kernel} on $(X,\mathscr{M}(X))$, or simply on $X$ when the context is clear..
\end{defn}

\begin{defn}
Let \(T\) be a correspondence on a compact metric space \(X\), and let \(\mathcal{Q}\) be a transition probability kernel on \((X, \mathscr{B}(X))\), where \(\mathscr{B}(X)\) is the Borel \(\sigma\)-algebra
on \(X\). We say that \(\mathcal{Q}\) is \textit{supported by} \(T\) if \(\mathcal{Q}_x(T(x)) = 1\) for every \(x \in X\).
\end{defn}

Transition probability kernels generalize measurable maps and transition matrices. Their actions on functions and measures are standard. Now we recall them below.

\begin{defn}
Let \((X,\mathscr{M}(X))\) and \((Y,\mathscr{M}(Y))\) be measurable spaces.
\begin{enumerate}
  \item[(i)] Let \(f \) be a bounded measurable function on $X$, and \(\mathcal{Q}\) be a transition probability kernel from \(Y\) to \(X\). The \textit{pullback} function \(\mathcal{Q}f: Y \to \mathbb{R}\) of \(f\) by \(\mathcal{Q}\) is defined as:
      \[
      \mathcal{Q}f(y) := \int_{X} f(x) \, d\mathcal{Q}_{y}(x).
      \]
  \item[(ii)] Let \(\mu \) be a probability measure on $Y$, and \(\mathcal{Q}\) be a transition probability kernel from \(Y\) to \(X\). The \textit{pushforward} probability measure \(\mu\mathcal{Q}\) on \(X\) is defined as:
      \[
      (\mu\mathcal{Q})(A) := \int_{Y} \mathcal{Q}(y,A) \, d\mu(y) \quad \text{for all } A \in \mathscr{M}(X).
      \]
\end{enumerate}
\end{defn}

\begin{defn}
Let \((X,\mathscr{M}(X))\) be a measurable space and \(\mathcal{Q}\) be a transition probability kernel on \(X\). We say that a probability measure
$\mu$ on $X$ is \textit{\(\mathcal{Q}\)-invariant} if $\mu\mathcal{Q}=\mu$.
In particular, if $X$ is a compact metric space and \(\mathcal{Q}\) is a transition probability kernel on \((X,\mathscr{B}(X))\), we denote by \(\mathcal{P}_\mathcal{Q}(X)\) the set of all \(\mathcal{Q}\)-invariant Borel probability measures on \(X\).
\end{defn}

For each $n\in \N$, denote by $\mathscr{M} (X^n)$ the $\sigma$-algebra on $X^n$ generated by $\bigcup_{i=0}^{n-1} \{ X^i \times A \times X^{n-1-i}  :  A\in \mathscr{M} (X) \}$. Denote by $\mathscr{M} (X^\omega)$ the $\sigma$-algebra on $X^\omega$ generated by $\bigcup_{i=0}^{+\infty} \{ X^i \times A \times X^\omega  :  A\in \mathscr{M} (X) \}$.
For each $A_{n+1}\subset \mathscr{M}(X^{n+1})$ and each $(x_1, \ldots, x_n)\in X^n$, write
\begin{equation*}
\pi_{n+1} (x_1, \ldots, x_n;  A_{n+1}):= \{x_{n+1}\in X : (x_1, \ldots, x_n ,  x_{n+1})\in A_{n+1}\}.
\end{equation*}

Next we recall the definition of transition probability kernel $\mathcal{Q}^{[n]} (n\in\N_0)$ and $\mathcal{Q}^{[\omega]}$ induced by $\mathcal{Q}$.

\begin{defn}\label{Q^[n]}
Let $\mathcal{Q}$ be a transition probability kernel on a measurable space $(X, \mathscr{M}(X))$, where $X$ is a set and $\mathscr{M}(X)$ is a $\sigma$-algebra on $X$. Define the transition probability kernel $\mathcal{Q}^{[n]}$ from $X$ to $X^{n+1}$ inductively on $n\in \N_0$ as follows:

	First, let $\mathcal{Q}^{[0]}_x =\delta_x$, the Dirac measure at $x\in X$, for all $x\in X$. If $\mathcal{Q}^{[n-1]}$ has been defined for some $n\in \N$, then we define $\mathcal{Q}^{[n]}$ as:
\begin{equation*}
\mathcal{Q}^{[n]} (x,  A_{n+1}):= \int_{X^n} \mathcal{Q} (x_n,  \pi_{n+1} (x_1, \ldots, x_n;  A_{n+1}))  \,d\mathcal{Q}_x^{[n-1]}
(x_1, \ldots, x_n)
\end{equation*}
for all $x\in X$ and $A_{n+1}\in \mathscr{M} \bigl(X^{n+1} \bigr)$.
\end{defn}

\begin{defn}\label{Q^omega}
Let $\mathcal{Q}$ be a transition probability kernel on a measurable space $(X, \mathscr{M}(X))$. Define the transition probability kernel $\mathcal{Q}^\omega$ from $X$ to $X^\omega$ as the unique transition probability kernel from $X$ to $X^\omega$ with the property that for each $x\in X$, each $n\in \N_0$, and each measurable set $A\in \mathscr{M}(X^{n+1})$, the following equality holds:
\begin{equation*}\label{Q^N(x,A*X^infty)=Q^[n](x,A)}
\mathcal{Q}^\omega (x, A\times X^\omega)= \mathcal{Q}^{[n]}(x, A).
\end{equation*}
\end{defn}

The following lemma is adapted from \cite[Lemmas 6.3 and A.9]{LLZ23} and will be used extensively throughout this paper.
\begin{lem}\label{lem:a integral equality}
Let \(T\) be a correspondence on a compact metric space \(X\), $\mathcal{Q}$ a transition probability kernel on $X$ supported by $T$, $\mu\in \mathcal{P}(X)$, and $\phi\in C(\mathcal{O}_2(T))$. Then
\[
\int_{\mathcal{O}_2(T)}\phi\,d(\mu\mathcal{Q}^{[1]})
=\int_X\int_{T(x_1)}\phi(x_1,x_2)\,d\mathcal{Q}_{x_1}(x_2)\,d\mu(x_1).
\]
\end{lem}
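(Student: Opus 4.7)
The plan is to unwind the definition of $\mathcal{Q}^{[1]}$ and then chain together two standard change-of-variables formulas for pushforward measures. I would proceed in three short steps, and since the statement is essentially a reformulation of the kernel-integration formalism recorded in \cite{LLZ23}, there is no substantial obstacle: the argument is bookkeeping with the definitions, with the only care needed being the reduction from integration over $\mathcal{O}_2(T)$ to integration over $X^2$.

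First, I unwind the inductive construction of $\mathcal{Q}^{[1]}$. Since $\mathcal{Q}^{[0]}_x=\delta_x$, the defining formula collapses to
\[
\mathcal{Q}^{[1]}(x,A)=\int_X\mathcal{Q}\bigl(x_1,\pi_2(x_1;A)\bigr)\,d\delta_x(x_1)
=\mathcal{Q}_x\bigl(\{y\in X:(x,y)\in A\}\bigr)
\]
for every Borel $A\subseteq X^2$. In other words, $\mathcal{Q}^{[1]}_x$ is the pushforward of $\mathcal{Q}_x$ under the embedding $\iota_x\colon y\mapsto(x,y)$. Because $\mathcal{Q}$ is supported by $T$, we have $\mathcal{Q}_x(T(x))=1$, so $\mathcal{Q}^{[1]}_x$ is concentrated on $\{x\}\times T(x)\subseteq\mathcal{O}_2(T)$, and hence so is $\mu\mathcal{Q}^{[1]}$.

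Second, since $\mathcal{O}_2(T)$ is closed in $X^2$, I extend $\phi$ to a bounded Borel function $\bar\phi$ on $X^2$ (Tietze's theorem even provides a continuous extension, though only measurability is needed). The change-of-variables formula for the pushforward $\mathcal{Q}^{[1]}_x=(\iota_x)_*\mathcal{Q}_x$ then yields, for each $x\in X$,
\[
\int_{X^2}\bar\phi\,d\mathcal{Q}^{[1]}_x
=\int_X\bar\phi(x,y)\,d\mathcal{Q}_x(y)
=\int_{T(x)}\phi(x,y)\,d\mathcal{Q}_x(y),
\]
the second equality using that $\mathcal{Q}_x$ is concentrated on $T(x)$ and that $\bar\phi$ agrees with $\phi$ there. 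This identity is first verified on indicators (where it is tautological from step one) and extended to bounded Borel integrands by linearity and the monotone class theorem.

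Third, the very definition of the pushforward $\mu\mathcal{Q}^{[1]}$ gives, via the same approximation machine, the Fubini-style identity
\[
\int_{X^2}\bar\phi\,d(\mu\mathcal{Q}^{[1]})
=\int_X\!\left(\int_{X^2}\bar\phi\,d\mathcal{Q}^{[1]}_x\right)d\mu(x).
\]
Because $\mu\mathcal{Q}^{[1]}$ is concentrated on $\mathcal{O}_2(T)$, the left-hand side coincides with $\int_{\mathcal{O}_2(T)}\phi\,d(\mu\mathcal{Q}^{[1]})$, and substituting the second step's formula into the inner integral gives the claimed equality.
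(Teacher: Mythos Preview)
Your proof is correct. The paper itself does not give a proof of this lemma; it simply records that the identity is adapted from \cite[Lemmas~6.3 and~A.9]{LLZ23}. Your argument is precisely the direct verification one would supply for those citations: unwind $\mathcal{Q}^{[1]}_x$ as the pushforward of $\mathcal{Q}_x$ along $y\mapsto(x,y)$, use that $\mathcal{Q}$ is supported by $T$ to see $\mu\mathcal{Q}^{[1]}$ is concentrated on $\mathcal{O}_2(T)$, and then apply the standard indicator-to-bounded-Borel extension twice. There is no substantive difference in strategy, only that you have written out what the paper leaves to the reference.
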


\subsection{Measure-theoretic entropy for transition probability kernels}
In this subsection, we recall the definition of measure-theoretic entropy for transition probability kernels introduced in \cite{LLZ23}, which has been proven to generalize the measure-theoretic entropy of measurable maps.

A {\it finite measurable partition} $\mathcal{A}$ of a measurable space $(X,  \mathscr{M}(X))$ is a finite collection of mutually disjoint measurable subsets $\{A_1 ,\, \dots ,\, A_n\}$ satisfying $\bigcup_{i=1}^n A_i =X$, where $n \in \N$. For a finite measurable partition $\mathcal{A}$, let
\begin{equation*}
\mathcal{A}^n:=\underbrace{\mathcal{A} \times \cdots \times \mathcal{A}}_n
	:= \{A_1 \times \cdots \times A_n  :  A_i \in \mathcal{A}  \text{ for every } 1\leq i\leq n  \} \subset \mathscr{M} (X^{n}).
\end{equation*}
It is clear that $\mathcal{A}^n$ is a finite measurable partition of $(X^{n}, \mathscr{M} (X^{n}))$.

\begin{defn}\label{h_mu (Q,A)}
Let $\mathcal{Q}$ be a transition probability kernel on a measurable space $(X, \mathscr{M}(X))$, $\mu$ be a $\mathcal{Q}$-invariant probability measure on $X$, and $\mathcal{A}$ be a finite measurable partition of $X$.
\begin{enumerate}
  \item[(i)] The {\it measure-theoretic entropy $h_\mu (\mathcal{Q},\mathcal{A})$ of $\mathcal{Q}$ w.r.t. $\mathcal{A}$}, is defined as
\begin{equation*}
h_\mu (\mathcal{Q},\mathcal{A}):= \lim_{n\to +\infty} \frac{1}{n}
H_{\mu \mathcal{Q}^{[n-1]}} (\mathcal{A}^n),
\end{equation*}
where $H_{\mu} (\mathcal{A})$ is defined as
\[
H_\mu (\mathcal{A}):= -\sum_{A\in \mathcal{A}}   \mu (A) \log (\mu (A)).
\]
  \item[(ii)] The {\it measure-theoretic entropy $h_\mu (\mathcal{Q})$ of $\mathcal{Q}$ for $\mu$} is defined as
      \[
      h_\mu (\mathcal{Q}):= \sup_{\mathcal{A}} h_\mu (\mathcal{Q},\mathcal{A}),
      \]
      where $\mathcal{A}$ ranges over all finite measurable partitions of $X$.
\end{enumerate}

\end{defn}

\section{Variational principle (I)}

\subsection{Measure-theoretic entropy of invariant measures for correspondences}

In this subsection, we introduce the measure-theoretic entropy of invariant measures for correspondences. First, we recall some foundational results concerning the invariant measures for correspondences.

Motivated by \cite{MA99} and \cite{LLZ23}, we derive the following equivalent characterizations, which are very important for the subsequent discussion. For the reader's convenience, a self-contained proof is provided.

\begin{lem}\label{lem:invariant-measure-characterization-correspondence}
Let \(T\) be a correspondence on a compact metric space \(X\). For a measure \(\mu \in \mathcal{P}(X)\) the following conditions are equivalent:

\begin{enumerate}
    \item[(i)] For every Borel set \( A \subset X \), it holds that
    \[
        \mu(A) \leq \mu(T^{-1}(A)).
    \]

    \item[(ii)] There exists a transition probability kernel \(\mathcal{Q}\) on \(X\) supported by $T$ such that
    \[
    \mu = \mu\mathcal{Q}.
    \]

    \item[(iii)] There exists a measure \(\tilde{\mu} \in \mathcal{P}(X^2)\) such that \(\tilde{\mu}(\mathcal{O}_2(T))=1\) and
        \[
        \mu =\tilde{\mu}\circ\tilde{\pi}_1^{-1}
        =\tilde{\mu}\circ\tilde{\pi}_2^{-1}.
        \]

    \item[(iv)] There exists a \( \sigma \)-invariant measure \(\nu \in \mathcal{P}_\sigma(X^\omega)\) which is supported on \( \mathcal{O}_\omega(T) \) and satisfies:
        \[
        \mu=\nu\circ\tilde{\pi}_1^{-1}.
        \]
\end{enumerate}
Moreover, the set of all \(\mu \in \mathcal{P}(X)\) that satisfy one of the above equivalent conditions is compact and convex in \( \mathcal{P}(X)\).
\end{lem}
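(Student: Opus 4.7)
The plan is to take condition (iii) as the central hub and verify each of the three equivalences $(\text{i})\Leftrightarrow(\text{iii})$, $(\text{ii})\Leftrightarrow(\text{iii})$, and $(\text{iv})\Leftrightarrow(\text{iii})$ separately. Five of the six implications are structural and reduce to unpacking the definitions; the only genuinely substantive step will be $(\text{i})\Rightarrow(\text{iii})$, which I plan to handle via a Hahn--Banach separation argument in $\mathcal{P}(X^2)$ in the spirit of Strassen's marginal existence theorem.

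For the five easy implications I would proceed as follows. $(\text{iii})\Rightarrow(\text{i})$: if $\tilde\mu(\mathcal{O}_2(T))=1$ and both marginals of $\tilde\mu$ equal $\mu$, then $(x,y)\in\mathcal{O}_2(T)$ with $y\in A$ forces $x\in T^{-1}(A)$, whence $\mu(A)=\tilde\mu(X\times A)\leq\tilde\mu(T^{-1}(A)\times X)=\mu(T^{-1}(A))$. $(\text{ii})\Rightarrow(\text{iii})$: take $\tilde\mu:=\mu\mathcal{Q}^{[1]}$; the explicit formula for $\mathcal{Q}^{[1]}$ combined with Lemma~\ref{lem:a integral equality} gives $\tilde\mu(\mathcal{O}_2(T))=\int\mathcal{Q}_{x_1}(T(x_1))\,d\mu(x_1)=1$ and both marginals equal to $\mu$. $(\text{iii})\Rightarrow(\text{ii})$: disintegrate $\tilde\mu$ along $\tilde\pi_1$ to obtain a kernel $\mathcal{Q}$ with $\mu\mathcal{Q}=\mu$ (from the equal second marginal) and $\mathcal{Q}_x(T(x))=1$ for $\mu$-a.e. $x$; repair the $\mu$-null exceptional set by setting $\mathcal{Q}_x:=\delta_{\xi(x)}$, where $\xi:X\to X$ is a Borel selection of $T$ (existing by the Kuratowski--Ryll-Nardzewski selection theorem). $(\text{iii})\Rightarrow(\text{iv})$: with this $\mathcal{Q}$, set $\nu:=\mu\mathcal{Q}^\omega$; $\mathcal{Q}$-invariance of $\mu$ yields $\sigma$-invariance of $\nu$, and $\mathcal{Q}_x(T(x))=1$ for every $x$ yields $\nu(\mathcal{O}_\omega(T))=1$. $(\text{iv})\Rightarrow(\text{iii})$: take $\tilde\mu:=\nu\circ\tilde\pi_{12}^{-1}$; shift invariance of $\nu$ furnishes the equal marginals, and $\tilde\pi_{12}(\mathcal{O}_\omega(T))\subset\mathcal{O}_2(T)$ furnishes the support. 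Finally, $(\text{iv})\Rightarrow(\text{i})$ is direct: $\mu(A)=\nu(\{\omega:\omega_1\in A\})=\nu(\{\omega:\omega_2\in A\})\leq\nu(\{\omega:\omega_1\in T^{-1}(A)\})=\mu(T^{-1}(A))$, using shift invariance and the support of $\nu$.

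The main obstacle is $(\text{i})\Rightarrow(\text{iii})$. Let $L:=\{\tilde\mu\in\mathcal{P}(X^2):\tilde\mu(\mathcal{O}_2(T))=1\}$; since $\mathcal{O}_2(T)$ is closed, Portmanteau makes $L$ weak-$^*$ closed in the compact space $\mathcal{P}(X^2)$, hence compact, and $L$ is obviously convex. Define $\Psi\colon L\to\mathcal{P}(X)\times\mathcal{P}(X)$ by $\Psi(\tilde\mu):=(\tilde\mu\circ\tilde\pi_1^{-1},\tilde\mu\circ\tilde\pi_2^{-1})$; the image $\Psi(L)$ is compact and convex. Assume for contradiction that $(\mu,\mu)\notin\Psi(L)$. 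By Hahn--Banach separation, there exist $f,g\in C(X)$ and $c\in\R$ with
\[
\sup_{\tilde\mu\in L}\int_{X^2}\bigl(f(x)+g(y)\bigr)\,d\tilde\mu(x,y)\;\leq\;c\;<\;\int f\,d\mu+\int g\,d\mu.
\]
Since $\delta_{(x,y)}\in L$ for every $(x,y)\in\mathcal{O}_2(T)$, the left-hand supremum equals $\sup_{x\in X}(f(x)+g_T(x))$ with $g_T(x):=\sup_{y\in T(x)}g(y)$ (attained because $T(x)$ is compact). Upper semicontinuity of $T$ makes $g_T$ upper semicontinuous, hence Borel. Integrating the pointwise bound $f(x)+g_T(x)\leq c$ against $\mu$ gives $\int f\,d\mu+\int g_T\,d\mu\leq c<\int f\,d\mu+\int g\,d\mu$, so $\int g_T\,d\mu<\int g\,d\mu$. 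But after shifting $g$ to be nonnegative, the layer-cake identity yields $\int g_T\,d\mu=\int_0^\infty\mu(T^{-1}(\{g>t\}))\,dt\geq\int_0^\infty\mu(\{g>t\})\,dt=\int g\,d\mu$ by hypothesis (i) applied to each Borel set $\{g>t\}$, contradicting the previous strict inequality. Hence $(\mu,\mu)\in\Psi(L)$, which is precisely (iii).

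For the final assertion, convexity of the set of $\mu$ satisfying (i) is immediate from linearity of the defining inequality in $\mu$. For compactness, the set $L':=\{\tilde\mu\in\mathcal{P}(X^2):\tilde\mu(\mathcal{O}_2(T))=1\text{ and }\tilde\mu\circ\tilde\pi_1^{-1}=\tilde\mu\circ\tilde\pi_2^{-1}\}$ is weak-$^*$ closed in the compact space $\mathcal{P}(X^2)$ and hence compact, and the continuous marginal map $\tilde\mu\mapsto\tilde\mu\circ\tilde\pi_1^{-1}$ sends $L'$ onto the set of $\mu$ satisfying the four equivalent conditions, which is therefore compact in $\mathcal{P}(X)$.
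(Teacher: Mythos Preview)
Your proof is correct and, for the cycle $(\text{ii})\Leftrightarrow(\text{iii})\Leftrightarrow(\text{iv})$, essentially coincides with the paper's: both disintegrate $\tilde\mu$ (the paper via \cite[Proposition~A.11]{LLZ23}, you via disintegration plus a Borel selection) and both push forward to $\mathcal{O}_\omega(T)$ via $\mu\mathcal{Q}^\omega$. The difference lies in $(\text{i})\Leftrightarrow(\text{iii})$: the paper simply cites \cite[Theorem~3.2]{MA99}, whereas you supply a self-contained Strassen-type argument via Hahn--Banach separation in $\mathcal{P}(X^2)$, combined with the layer-cake identity and the key observation $\{g_T>t\}=T^{-1}(\{g>t\})$. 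Your route buys independence from the Miller--Akin reference and makes the lemma stand on its own; the paper's route is of course shorter. Your explicit compactness argument at the end (via closedness of $L'$ and continuity of the marginal map) is also more detailed than the paper's, which leaves that step implicit.
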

\begin{proof}
(iv) $\Rightarrow$ (iii): Let $\tilde{\mu}=v\circ\tilde{\pi}_{12}^{-1}$.
It can be seen that \(\tilde{\mu}(\mathcal{O}_2(T))=1\). In addition,
we have
\[
\tilde{\mu}\circ\tilde{\pi}_1^{-1}
=(v\circ\tilde{\pi}_{12}^{-1})\circ\tilde{\pi}_1^{-1}
=v\circ\tilde{\pi}_1^{-1}=\mu,
\]
and
\[
\tilde{\mu}\circ\tilde{\pi}_2^{-1}
=(v\circ\tilde{\pi}_{12}^{-1})\circ\tilde{\pi}_2^{-1}
=v\circ\tilde{\pi}_2^{-1}=(v\circ\sigma^{-1})\circ\tilde{\pi}_1^{-1}
=v\circ\tilde{\pi}_1^{-1}=\mu.
\]

(iii) $\Rightarrow$ (ii):
Applying \cite[Proposition A.11]{LLZ23} with $M=\mathcal{O}_2(T)$, we can find a transition probability kernel \(\mathcal{Q}\) on \(X\) such that $\mathcal{Q}$ is supported by $T$, $\mu=\tilde{\mu}\circ\tilde{\pi}_1^{-1}$ and $\tilde{\mu}=\mu\mathcal{Q}^{[1]}$. Furthermore, by \cite[Corollary A.7]{LLZ23} we have $\mu=\tilde{\mu}\circ\tilde{\pi}_2^{-1}
=(\mu\mathcal{Q}^{[1]})\circ \tilde{\pi}_2^{-1}=\mu\mathcal{Q}$.

(ii) $\Rightarrow$ (iv): Let $v=\mu\mathcal{Q}^{\omega}$. Then $\mu\mathcal{Q}^{\omega}$ is \( \sigma \)-invariant (see \cite[Section 5.4]{LLZ23}) and $\mu=\nu\circ\tilde{\pi}_1^{-1}$. It follows from \cite[Lemma 6.13]{LLZ23}
that $\mu\mathcal{Q}^{\omega}$ is supported on \( \mathcal{O}_\omega(T) \).

Finally, the equivalence of conditions (i) and (iii) follows immediately from \cite[Theorem 3.2]{MA99}.
\end{proof}

Now we recall the definition of invariant measures for correspondences, which comes from \cite{MA99}.

\begin{defn}\label{defn:invariant-measure-correspondence}
    Let $T$ be a correspondence on a compact metric space $X$. A Borel probability measure $\mu$ on $X$ is called \textit{$ T $-invariant} if
    $$
    \mu(A) \leq \mu(T^{-1}(A)) \quad \text{for all Borel sets } A \subset X.
    $$
    We denote by $\mathcal{P}_T(X)$ the set of all $T$-invariant Borel probability measures on $X$. For $\mu \in \mathcal{P}_T(X)$, let $\mathcal{K}_\mu$ denote the set of all transition probability kernels $\mathcal{Q}$ on $X$ such that:
    \begin{enumerate}
        \item[(i)] $\mathcal{Q}$ is supported by $T$ (i.e., $\mathcal{Q}_x(T(x)) = 1$ for every $x \in X$), and
        \item[(ii)] $\mu$ is $\mathcal{Q}$-invariant (i.e., $\mu = \mu \mathcal{Q}$).
    \end{enumerate}
\end{defn}

\begin{rem}\label{rem:remark of invariant measure for correspondences}
Some remarks are in order.
    \begin{enumerate}
        \item[(i)] Lemma \ref{lem:invariant-measure-characterization-correspondence} provides several equivalent characterizations of $T$-invariant measures and
            establishes that $\mathcal{P}_T(X)$ is compact and convex within $\mathcal{P}(X)$, the space of Borel probability measures on $X$ equipped with the weak* topology.

        \item[(ii)] Let $\mathcal{P}^e_T(X)$ denote the set of {\it extreme points} of the compact convex set $\mathcal{P}_T(X)$. By the Choquet representation theorem, for every $\mu \in \mathcal{P}_T(X)$, there exists a probability measure $\mathbb{P}_\mu$ on the Borel $\sigma$-algebra of $\mathcal{P}_T(X)$ such that $\mathbb{P}_\mu(\mathcal{P}^e_T(X)) = 1$ and
            \begin{equation*}\label{eq:extremal-decomposition}
                \int_X \psi  d\mu = \int_{\mathcal{P}^e_T(X)} \left( \int_X \psi  dm \right) d\mathbb{P}_\mu(m)
            \end{equation*}
            holds for every continuous function $\psi \in C(X)$. We express this as
            $$\mu = \int_{\mathcal{P}^e_T(X)} m  d\mathbb{P}_\mu(m)$$ and call it the \textit{extremal decomposition of $\mu$}.
    \end{enumerate}
\end{rem}

\begin{defn}\label{defn:measure-theoretic entropy of invariant measures for correspondences}
Let $T$ be a correspondence on a compact metric space $(X, d)$ and $\phi: \mathcal{O}_2(T) \rightarrow \mathbb{R}$ be a continuous function. For $\mu\in\mathcal{P}_T(X)$, we define the {\it measure-theoretic pressure of $\phi$ for $\mu$} and the {\it measure-theoretic entropy of $\mu$} as follows:
\[
P_\mu (T,\phi)=\sup_{\mathcal{Q}\in \mathcal{K}_\mu}\left\{h_\mu (\mathcal{Q})+ \int_X \int_{T(x_1)} \phi (x_1, x_2) \, d\mathcal{Q}_{x_1}(x_2) \, d\mu (x_1)\right\},
\]
and
\[
h_\mu (T)=\sup_{\mathcal{Q}\in \mathcal{K}_\mu}\{h_\mu (\mathcal{Q})\}.
\]
\end{defn}

Based on the above definition, we can restate Theorem \ref{thm:variational-principle-correspondence} as follows:

\begin{thm}\label{thm:restatement of variational principle for correspondence}
Let $(X, d)$ be a compact metric space, $T$ be a forward expansive correspondence on $X$, and $\phi: \mathcal{O}_2(T) \rightarrow \mathbb{R}$ be a continuous function.
\begin{enumerate}
  \item[(i)] The variational principle for topological pressure holds:
\[
\Ptop(T, \phi) = \sup_{\mu \in \mathcal{P}_T(X)} \left\{ P_\mu (T,\phi) \right\}.
\]
Moreover, this supremum can be attained at some $\mu \in \mathcal{P}_T(X)$.
  \item[(ii)] The variational principle for topological entropy holds:
\[
\htop(T) = \sup_{\mu \in \mathcal{P}_T(X)} \left\{ h_\mu (T) \right\}.
\]
Moreover, this supremum can be attained at some $\mu \in \mathcal{P}_T(X)$.
\end{enumerate}
\end{thm}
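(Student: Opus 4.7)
The plan is to deduce this restatement directly from Theorem \ref{thm:variational-principle-correspondence} by re-indexing the supremum there, using the equivalent characterizations of $T$-invariance given in Lemma \ref{lem:invariant-measure-characterization-correspondence}. The essential observation is that the constraints appearing in Theorem \ref{thm:variational-principle-correspondence}, namely "$\mathcal{Q}$ is a transition probability kernel on $X$ supported by $T$ and $\mu$ is $\mathcal{Q}$-invariant", describe exactly the set
\[
\{(\mathcal{Q},\mu)\colon \mu\in \mathcal{P}_T(X),\ \mathcal{Q}\in \mathcal{K}_\mu\}.
\]
Indeed, any such pair gives a $T$-invariant $\mu$ via the implication (ii)$\Rightarrow$(i) of Lemma \ref{lem:invariant-measure-characterization-correspondence}, and conversely the implication (i)$\Rightarrow$(ii) of that lemma guarantees that $\mathcal{K}_\mu\neq \emptyset$ for every $\mu\in \mathcal{P}_T(X)$.

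First I would rewrite the supremum in Theorem \ref{thm:variational-principle-correspondence} as an iterated supremum: fix $\mu\in \mathcal{P}_T(X)$ on the outside and then range $\mathcal{Q}$ over $\mathcal{K}_\mu$ on the inside. By Definition \ref{defn:measure-theoretic entropy of invariant measures for correspondences}, the inner supremum is precisely $P_\mu(T,\phi)$, giving
\[
\Ptop(T,\phi)=\sup_{\mu\in \mathcal{P}_T(X)}\sup_{\mathcal{Q}\in \mathcal{K}_\mu}\left\{h_\mu(\mathcal{Q})+\int_X\int_{T(x_1)}\phi(x_1,x_2)\,d\mathcal{Q}_{x_1}(x_2)\,d\mu(x_1)\right\}=\sup_{\mu\in \mathcal{P}_T(X)}P_\mu(T,\phi),
\]
which proves (i). For the attainment assertion, Theorem \ref{thm:variational-principle-correspondence} provides a pair $(\mathcal{Q}_0,\mu_0)$ realizing the supremum; then $\mu_0\in \mathcal{P}_T(X)$ and $\mathcal{Q}_0\in \mathcal{K}_{\mu_0}$, so
\[
\Ptop(T,\phi)=h_{\mu_0}(\mathcal{Q}_0)+\int_X\int_{T(x_1)}\phi\,d\mathcal{Q}_{0,x_1}\,d\mu_0(x_1)\leq P_{\mu_0}(T,\phi)\leq \Ptop(T,\phi),
\]
forcing equality throughout and showing that $\mu_0$ attains the supremum in (i).

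Part (ii) follows by applying part (i) to the zero potential $\phi\equiv 0$: by definition $P_\mu(T,0)=h_\mu(T)$ and $\Ptop(T,0)=\htop(T)$, so the variational principle and attainment for topological entropy are immediate.

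Since every step is a direct translation through Lemma \ref{lem:invariant-measure-characterization-correspondence} and the definitions, there is no genuine obstacle; the argument is essentially bookkeeping. The only point to verify with a little care is that the integral in the definition of $P_\mu(T,\phi)$ matches the integral appearing in Theorem \ref{thm:variational-principle-correspondence}, which it does verbatim (and, via Lemma \ref{lem:a integral equality}, both equal $\int_{\mathcal{O}_2(T)}\phi\,d(\mu\mathcal{Q}^{[1]})$).
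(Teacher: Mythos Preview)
Your proposal is correct and matches the paper's approach exactly: the paper presents this theorem explicitly as a restatement of Theorem~\ref{thm:variational-principle-correspondence} via Definition~\ref{defn:measure-theoretic entropy of invariant measures for correspondences} and provides no separate proof, so your bookkeeping argument---re-indexing the supremum over pairs $(\mathcal{Q},\mu)$ as an iterated supremum using Lemma~\ref{lem:invariant-measure-characterization-correspondence}---is precisely the intended justification.
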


We now examine the behavior of the measure-theoretic pressure (entropy) under topological conjugacy.

\begin{thm}\label{thm:measure pressure under topological conjugacy}
Let $T$ be a correspondence on a compact metric space $X$, let $S$ be a correspondence on a compact metric space $Y$, let $\mu$ be a $T$-invariant measure, and let $\phi: \mathcal{O}_2(S) \rightarrow \mathbb{R}$ be a continuous function. If $T$ and $S$ are topologically conjugate via a homeomorphism $\theta: X\to Y$, then $\mu\circ\theta^{-1}$ is an $S$-invariant measure and
\[
P_\mu(T,\varphi)=P_{\mu\circ\theta^{-1}}(S,\phi),
\]
where $\varphi:=\phi\circ\theta^{(2)}|_{\mathcal{O}_2(T)}$. Especially,
\[
h_\mu(T)=h_{\mu\circ\theta^{-1}}(S).
\]
\end{thm}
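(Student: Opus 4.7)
The plan is to transport the entire machinery defining $P_\mu(T, \varphi)$ from $(X,T)$ to $(Y,S)$ via the conjugacy $\theta$. Write $\nu := \mu \circ \theta^{-1}$. First I would check that $\nu$ is $S$-invariant: from $S \circ \theta = \theta \circ T$ a short set-chase yields $\theta^{-1}(S^{-1}(B)) = T^{-1}(\theta^{-1}(B))$ for every Borel $B \subset Y$, and then $\nu(B) = \mu(\theta^{-1}(B)) \leq \mu(T^{-1}(\theta^{-1}(B))) = \nu(S^{-1}(B))$, so Lemma~\ref{lem:invariant-measure-characterization-correspondence}(i) applies. Next I would build a bijection $\mathcal{K}_\mu \leftrightarrow \mathcal{K}_\nu$ by sending $\mathcal{Q} \in \mathcal{K}_\mu$ to $\mathcal{Q}^\ast$ on $Y$ with $\mathcal{Q}^\ast_y(B) := \mathcal{Q}_{\theta^{-1}(y)}(\theta^{-1}(B))$, and verify that $\mathcal{Q}^\ast$ is a transition probability kernel supported by $S$ (using $\theta^{-1}(S(y)) = T(\theta^{-1}(y))$) and that $\nu \mathcal{Q}^\ast = \nu$ by direct computation. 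The inverse map is the analogous one obtained by swapping the roles of $\theta$ and $\theta^{-1}$.

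The core technical step is the pushforward identity
\[
\nu (\mathcal{Q}^\ast)^{[n]} = (\mu \mathcal{Q}^{[n]}) \circ \left(\theta^{(n+1)}\right)^{-1} \quad \text{for every } n \in \mathbb{N}_0,
\]
which I would prove by induction on $n$ directly from Definition~\ref{Q^[n]}. The base case $n=0$ is immediate from $\mathcal{Q}^{\ast[0]}_y = \delta_y$ together with the fact that the pushforward of $\delta_x$ by $\theta$ is $\delta_{\theta(x)}$; the inductive step unwinds the recursion and applies the change-of-variables formula to rewrite integrals in $Y$-coordinates as integrals in $X$-coordinates, using the slicing map compatibility $\pi_{n+1}(\theta(x_1),\ldots,\theta(x_n);\,\theta^{(n+1)}(\cdot)) = \theta(\pi_{n+1}(x_1,\ldots,x_n;\,\cdot))$. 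Once this identity is in place, the two ingredients of the measure-theoretic pressure match directly. For the integral, applying Lemma~\ref{lem:a integral equality} and a change of variables gives
\[
\int_X \int_{T(x_1)} \varphi\, d\mathcal{Q}_{x_1}\, d\mu = \int \phi \circ \theta^{(2)}\, d\!\left(\mu \mathcal{Q}^{[1]}\right) = \int \phi\, d\!\left(\nu (\mathcal{Q}^\ast)^{[1]}\right) = \int_Y \int_{S(y_1)} \phi\, d\mathcal{Q}^\ast_{y_1}\, d\nu.
\]
For the entropy, since $\theta$ is a homeomorphism the assignment $\mathcal{A} \mapsto \theta(\mathcal{A}) := \{\theta(A) : A \in \mathcal{A}\}$ is a bijection on finite measurable partitions, and the pushforward identity yields $H_{\nu (\mathcal{Q}^\ast)^{[n-1]}}\!\left(\theta(\mathcal{A})^n\right) = H_{\mu \mathcal{Q}^{[n-1]}}(\mathcal{A}^n)$, whence $h_\nu(\mathcal{Q}^\ast, \theta(\mathcal{A})) = h_\mu(\mathcal{Q}, \mathcal{A})$ and thus $h_\nu(\mathcal{Q}^\ast) = h_\mu(\mathcal{Q})$. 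Taking the supremum via the bijection of the previous paragraph yields $P_\mu(T, \varphi) = P_\nu(S, \phi)$, and specialising to $\phi \equiv 0$ gives $h_\mu(T) = h_\nu(S)$.

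The main obstacle will be verifying the pushforward identity for $\mathcal{Q}^{[n]}$: although conceptually it is just pushing forward by $\theta$, the recursive definition of $\mathcal{Q}^{[n]}$ interleaves $\mathcal{Q}$ with the slicing maps $\pi_{n+1}(x_1,\ldots,x_n;\cdot)$, so the inductive step requires careful bookkeeping to ensure the change-of-variables formula can be applied inside the iterated integrals. Everything else in the argument, including the construction of the bijection $\mathcal{K}_\mu \leftrightarrow \mathcal{K}_\nu$ and the final supremum, is routine once this identity is established.
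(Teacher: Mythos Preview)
Your proposal is correct and follows essentially the same route as the paper: both construct the transferred kernel $\mathcal{Q}^\ast$ (the paper calls it $\mathcal{L}$) by $\mathcal{Q}^\ast_y(B)=\mathcal{Q}_{\theta^{-1}(y)}(\theta^{-1}(B))$, prove the pushforward identity $(\mu\mathcal{Q}^{[n]})\circ(\theta^{(n+1)})^{-1}=\nu(\mathcal{Q}^\ast)^{[n]}$ by induction on $n$ from Definition~\ref{Q^[n]}, and read off the equality of the entropy and integral terms. The only cosmetic differences are that you verify $S$-invariance of $\nu$ via criterion~(i) of Lemma~\ref{lem:invariant-measure-characterization-correspondence} (the paper uses criterion~(ii) through the kernel $\mathcal{L}$), and you set up the explicit bijection $\mathcal{K}_\mu\leftrightarrow\mathcal{K}_\nu$ to obtain $P_\mu(T,\varphi)=P_\nu(S,\phi)$ directly, whereas the paper records only the inequality $h_\mu(\mathcal{Q})\leq h_\nu(\mathcal{L})$ and $P_\mu(T,\varphi)\leq P_\nu(S,\phi)$ and then invokes symmetry of the conjugacy for the reverse direction.
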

\begin{proof}
By the $T$-invariance of $\mu$, Lemma \ref{lem:invariant-measure-characterization-correspondence} implies the existence a transition probability kernel $\mathcal{Q}$ on $X$ supported by $T$ satisfying $\mu\mathcal{Q}=\mu$.
Let $\mathcal{L}: Y\times \mathscr{B}(Y)\to [0,1]$ be defined as follows:
for any $y\in Y$ and $B\in \mathscr{B}(Y)$, set
\[
\mathcal{L}(y,B):=\mathcal{Q}(\theta^{-1}(y),\theta^{-1}(B)).
\]
It is not difficult to see that $\mathcal{L}$ is a transition probability kernel on $Y$ supported by $S$. We divide the remaining proof into several steps.

\emph{Step 1. The measure $(\mu\circ\theta^{-1})$ is $S$-invariant.}

Since
\begin{align*}
((\mu\circ\theta^{-1})\mathcal{L})(B)
&=\int_Y\mathcal{L}(y,B)\,d\mu\circ\theta^{-1}(y)\\
&=\int_X\mathcal{L}(\theta(x),B)\,d\mu(x)\\
&=\int_X\mathcal{Q}(x,\theta^{-1}(B))\,d\mu(x)\\
&=(\mu\mathcal{Q})(\theta^{-1}(B))\\
&=(\mu\circ\theta^{-1})(B),
\end{align*}
we deduce that $(\mu\circ\theta^{-1})$ is $\mathcal{L}$-invariant and hence $S$-invariant
by Lemma \ref{lem:invariant-measure-characterization-correspondence}.

\emph{Step 2. By Theorem \ref{thm:topological pressure under topological conjugacy}, the function $\varphi$ is well-defined and continuous.}

\emph{Step 3.
For each $n\in\N$, the following equality
\begin{equation}\label{eq:relation of Q and L}
\mathcal{Q}^{[n]}(x,A_{n+1})=\mathcal{L}^{[n]}(\theta x,\theta^{(n+1)}(A_{n+1}))
\end{equation}
holds for all $A_{n+1}\in\mathscr{B}(X^{n+1})$. In other word,
\[
\mathcal{Q}^{[n]}_x\circ(\theta^{(n+1)})^{-1}=\mathcal{L}^{[n]}_{\theta x}.
\]
Furthermore, one has
\[
(\mu\mathcal{Q}^{[n]})(A_{n+1})
=((\mu\circ\theta^{-1})\mathcal{L}^{[n]})(\theta^{(n+1)}(A_{n+1})).
\]}

For $n=1$ and $A_{2}\in\mathscr{B}(X^{2})$, we have
\begin{align*}
\mathcal{Q}^{[1]}(x,A_{2})&=\int_X\mathcal{Q}(x_1,\pi_2(x_1;A_{2}))
\,d\mathcal{Q}^{[0]}_x(x_1)\\
&=\mathcal{Q}(x,\pi_2(x;A_{2}))\\
&=\mathcal{L}(\theta x,\theta(\pi_2(x;A_{2})))\\
&=\mathcal{L}(\theta x,\pi_2(\theta x;\theta^{(2)}(A_{2})))\\
&=\int_Y\mathcal{L}(x_1,\pi_2(x_1;\theta^{(2)}(A_{2})))
\,d\mathcal{L}^{[0]}_{\theta x}(x_1)\\
&=\mathcal{L}^{[1]}(\theta x,\theta^{(2)}(A_{2})).
\end{align*}
So \eqref{eq:relation of Q and L} holds for $n=1$.

We assume that \eqref{eq:relation of Q and L} holds for $n=k$,
and prove that it also holds for $n=k+1$.
Next, denote $\underline{x}_1^n:=(x_1,\ldots,x_{n})\in X^n$ and $\underline{y}_1^n:=(y_1,\ldots,y_{n})\in Y^n$ for $n\in\N$.
For $A_{k+2}\in\mathscr{B}(X^{k+2})$, we have
\begin{align*}
\mathcal{Q}^{[k+1]}(x,A_{k+2})&=\int_{X^{k+1}}
\mathcal{Q}(x_{k+1},\pi_{k+2}(\underline{x}_1^{k+1};A_{k+2}))
\,d\mathcal{Q}^{[k]}_x(\underline{x}_1^{k+1})\\
&=\int_{X^{k+1}}
\mathcal{L}(\theta(x_{k+1}),\pi_{k+2}
(\theta^{(k+1)}(\underline{x}_1^{k+1});\theta^{(k+2)}(A_{k+2})))
\,d\mathcal{Q}^{[k]}_x(\underline{x}_1^{k+1})\\
&=\int_{Y^{k+1}}
\mathcal{L}(y_{k+1},\pi_{k+2}
(\underline{y}_1^{k+1};\theta^{(k+2)}(A_{k+2})))
\,d\mathcal{Q}^{[k]}_x\circ(\theta^{(k+1)})^{-1}(\underline{y}_1^{k+1})\\
&=\int_{Y^{k+1}}
\mathcal{L}(y_{k+1},\pi_{k+2}
(\underline{y}_1^{k+1};\theta^{(k+2)}(A_{k+2})))
\,d\mathcal{L}^{[k]}_{\theta x}(\underline{y}_1^{k+1})\\
&=\mathcal{L}^{[k+1]}(\theta x,\theta^{(k+2)}(A_{k+2})).
\end{align*}
Hence \eqref{eq:relation of Q and L} holds for $n=k+1$.
Furthermore, we get
\begin{align*}
(\mu\mathcal{Q}^{[n]})(A_{n+1})&=\int_X\mathcal{Q}^{[n]}(x,A_{n+1})\,d\mu(x)\\
&=\int_X\mathcal{L}^{[n]}(\theta x,\theta^{(n+1)}(A_{n+1}))\,d\mu(x)\\
&=\int_Y\mathcal{L}^{[n]}(y,\theta^{(n+1)}(A_{n+1}))\,d(\mu\circ\theta^{-1})(y)\\
&=((\mu\circ\theta^{-1})\mathcal{L}^{[n]})(\theta^{(n+1)}(A_{n+1})).
\end{align*}
This ends the proof of step 3.

\emph{Step 4. We prove that $P_\mu(T,\varphi)=P_{\mu\circ\theta^{-1}}(S,\phi)$.}

Let $\mathcal{A}=\{A_1,\ldots,A_k\}$ be a finite Borel measurable partition of $X$. Define
\[
\mathcal{B}:=\{\theta(A_1),\ldots,\theta(A_k)\},
\]
which forms a finite measurable partition of $Y$. From step 3 we have
\[
H_{\mu\mathcal{Q}^{[n-1]}}(\mathcal{A}^n)
=H_{(\mu\circ\theta^{-1})\mathcal{L}^{[n-1]}}(\mathcal{B}^n).
\]
It follows immediately that $h_\mu(\mathcal{Q},\mathcal{A})
=h_{\mu\circ\theta^{-1}}(\mathcal{L},\mathcal{B})$. Consequently,
\[
h_\mu(\mathcal{Q})
\leq h_{\mu\circ\theta^{-1}}(\mathcal{L}).
\]
Moreover, by Lemma \ref{lem:a integral equality} and step 3,
\begin{align*}
\int_X \int_{T(x_1)} \varphi (x_1, x_2) \, d\mathcal{Q}_{x_1}(x_2) \, d\mu (x_1)
&=\int_{\mathcal{O}_2(T)}\phi\circ\theta^{(2)}\,d(\mu\mathcal{Q}^{[1]})\\
&=\int_{\mathcal{O}_2(S)}\phi\,d(\mu\mathcal{Q}^{[1]}\circ(\theta^{(2)})^{-1})\\
&=\int_{\mathcal{O}_2(S)}\phi\,d((\mu\circ\theta^{-1})\mathcal{L}^{[1]})\\
&=\int_Y \int_{S(y_1)} \phi (y_1, y_2) \, d\mathcal{L}_{y_1}(y_2)
\, d(\mu\circ\theta^{-1}) (x_1).
\end{align*}
Therefore
\[
P_\mu(T,\varphi)\leq P_{\mu\circ\theta^{-1}}(S,\phi).
\]
By symmetry of the conjugacy $\theta$, it holds that
$P_\mu(T,\varphi)= P_{\mu\circ\theta^{-1}}(S,\phi)$.
\end{proof}

\subsection{Variational principle (I)}

In this subsection, we provide a partial solution to Question \ref{ques:1}. Specifically, we introduce a class of correspondences and prove that for such systems, the topological pressure $\Ptop(T, \phi)$ is determined by the measure-theoretic pressure over $\mathcal{P}^e_T(X)$,  the extreme points of the space of $T$-invariant Borel probability measures.

The following definition draws inspiration from the Lee--Lyubich--Markorov--Mazor--Mukherjee anti-holomorphic correspondences in complex dynamics.

\begin{defn}
Let $(X, d)$ be a compact metric space and $T$ be a correspondence on $X$.
We say that \(T\) is {\it generated by $(X_1, T_1)\to (X_2, T_2)\to\cdots \to(X_d, T_d)$}
if the following several conditions hold.
\begin{enumerate}
  \item[(i)] \(X=\bigcup_{i=1}^d X_i\).

  \item[(ii)] \(X_i\) is a closed subset of \(X\) for every $i=1,\ldots,d$.

  \item[(iii)] $T_i$ is a correspondence on $X_i$ for every $i=1,\ldots,d$.

  \item[(iv)] $T|_{X_i}=T_i$ for every $i=1,\ldots,d$. In other word, \(T(x)\cap X_i =T_i(x)\) for \(x\in X_i\).
  \item[(v)] $T(X_i)\cap \left(\bigcup_{j= 1}^{i-1} X_i\setminus X_i\right)=\emptyset$ for every $i=2,\ldots,d$.
\end{enumerate}
\end{defn}

\begin{rem}
We give some comments for the above definition.
\begin{enumerate}
  \item[(i)] If the correspondence \(T\) on a compact metric space $X$ is generated by $(X_1, T_1)\to (X_2, T_2)\to\cdots \to(X_d, T_d)$, then we call $(X_1, T_1)\to (X_2, T_2)\to\cdots \to(X_d, T_d)$ a decomposition of the correspondence \(T\) on $X$.

  \item[(ii)] Let $T$ be a correspondence on $X$ generated by $(X_1, T_1)\to (X_2, T_2)\to\cdots \to(X_d, T_d)$ and $\phi: \mathcal{O}_2(T) \rightarrow \mathbb{R}$ be a continuous function. Define
      $\phi_{T_i}: \mathcal{O}_2(T_i) \rightarrow \mathbb{R}$ as follows:
      \[
      \phi_{T_i}(x_1,x_2):=\phi(x_1,x_2) \text{ for all }(x_1,x_2)\in\mathcal{O}_2(T_i)\subset \mathcal{O}_2(T).
      \]
      Clearly, $\phi_{T_i}$ is also a continuous function for every $i=1,\ldots,d$.
\end{enumerate}
\end{rem}

The following lemma is standard, and we omit its proof here for brevity.
\begin{lem}\label{lem:limit lemma}
Let $\{a_n\}_{n\geq 1}$ and $\{b_n\}_{n\geq 1}$ be sequences such that
$a_n>0$ and $b_n>0$ for all $n$. Then
\[
\limsup_{n\to\infty}\frac{1}{n}\log\sum_{k=1}^na_kb_{n-k}
=\max\left\{\limsup_{n\to\infty}\frac{1}{n}\log a_n, \limsup_{n\to\infty}\frac{1}{n}\log b_n\right\}.
\]
\end{lem}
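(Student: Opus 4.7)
The plan is to split the equality into two inequalities. Set $A := \limsup_{n\to\infty} \frac{1}{n}\log a_n$ and $B := \limsup_{n\to\infty} \frac{1}{n}\log b_n$, and by symmetry of the statement in $\{a_n\}$ and $\{b_n\}$ assume $A \geq B$, so the right-hand side equals $A$. The cases where $A$ or $B$ equal $\pm\infty$ are handled by the same argument with trivial domination, so I focus on the case where both are finite.

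For the upper bound, fix $\epsilon > 0$. By the definition of $\limsup$, the estimates $a_k \leq e^{(A+\epsilon)k}$ and $b_m \leq e^{(B+\epsilon)m}$ hold outside a finite set of indices, and absorbing the finitely many exceptional terms into a common constant $C \geq 1$ gives $a_k \leq C e^{(A+\epsilon)k}$ and $b_m \leq C e^{(B+\epsilon)m}$ for every $k, m \geq 1$. Substituting into the convolution,
\[
\sum_{k=1}^{n} a_k b_{n-k} \;\leq\; C^2 e^{(A+\epsilon)n} \sum_{k=1}^{n} e^{(B-A)(n-k)},
\]
where the geometric sum is bounded by $n$ when $A = B$ and by a fixed constant when $A > B$. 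Applying $\frac{1}{n}\log$, letting $n \to \infty$, and then $\epsilon \to 0^+$ yields $\limsup_{n\to\infty} \frac{1}{n}\log \sum_{k=1}^n a_k b_{n-k} \leq A$.

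For the lower bound, choose a subsequence $(n_j)$ along which $\frac{1}{n_j}\log a_{n_j} \to A$. For each $j$, the sum with upper index $n_j + 1$ contains the summand $a_{n_j} b_1$, so
\[
\sum_{k=1}^{n_j + 1} a_k b_{n_j + 1 - k} \;\geq\; a_{n_j} b_1 \;>\; 0.
\]
Taking $\frac{1}{n_j + 1}\log$ and sending $j \to \infty$ gives $\limsup \geq A$, since $\frac{\log b_1}{n_j + 1} \to 0$ and $\frac{n_j}{n_j+1} \cdot \frac{1}{n_j}\log a_{n_j} \to A$.

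The main obstacle: there is no substantive obstacle here, as this is a standard convolution-type estimate. The only delicate step is obtaining the uniform bound $a_k \leq C e^{(A+\epsilon)k}$ valid for \emph{every} $k \geq 1$ rather than only eventually, which is handled by enlarging the constant $C$ to absorb the finitely many exceptional indices. The indexing convention for $b_{n-k}$ at $k = n$ is sidestepped in the lower bound by using the unambiguous term $a_{n_j} b_1$ in the sum of length $n_j + 1$.
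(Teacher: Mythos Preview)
Your proof is correct. The paper explicitly states that this lemma is standard and omits its proof, so there is nothing to compare against; your argument---exponential domination $a_k \le Ce^{(A+\epsilon)k}$, $b_m \le Ce^{(B+\epsilon)m}$ for the upper bound and extraction of the single term $a_{n_j}b_1$ along a subsequence realizing $A$ for the lower bound---is precisely the standard route for such convolution-type estimates.
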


\begin{lem}\label{lem:Entropy formula for two subsystems}
Let $(X, d)$ be a compact metric space, $T$ be a correspondence on $X$ generated by $(X_1, T_1)\to (X_2, T_2)$, and $\phi: \mathcal{O}_2(T) \rightarrow \mathbb{R}$ be a continuous function. Then
\[
\Ptop(T,\phi)=\max\{\Ptop(T_1,\phi_{T_1}),\Ptop(T_2,\phi_{T_2})\}.
\]
\end{lem}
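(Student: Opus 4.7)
The plan is to reduce the question to a convolution-type estimate that can be handled by Lemma \ref{lem:limit lemma}, based on the natural decomposition of $\mathcal{O}_{n+1}(T)$ according to the first index at which an orbit enters $X_2$. The easy direction $\Ptop(T,\phi)\geq\max\{\Ptop(T_1,\phi_{T_1}),\Ptop(T_2,\phi_{T_2})\}$ is immediate: since $X_i$ is closed, $\mathcal{O}_{n+1}(T_i)\subset \mathcal{O}_{n+1}(T)$ carries the same metric $d_{n+1}$, so every $\epsilon$-separated subset of $\mathcal{O}_{n+1}(T_i)$ is $\epsilon$-separated in $\mathcal{O}_{n+1}(T)$; hence $s_n(T_i,\phi_{T_i},\epsilon)\leq s_n(T,\phi,\epsilon)$ for all $n\in\N$ and $\epsilon>0$, and the lower bound follows upon taking the usual limits.

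For the reverse inequality, fix $\epsilon>0$, $n\in\N$, and let $E\subset \mathcal{O}_{n+1}(T)$ be an $\epsilon$-separated set. For each $\underline{x}=(x_1,\ldots,x_{n+1})\in E$ let $k(\underline{x})$ be the smallest index with $x_k\in X_2$, or $k(\underline{x})=n+2$ if no such index exists; write $E_k:=\{\underline{x}\in E: k(\underline{x})=k\}$. Condition (v) yields $T(X_2)\subset X_2$, so once an orbit meets $X_2$ it stays there. Consequently, orbits in $E_{n+2}$ lie entirely in $X_1\setminus X_2\subset X_1$ and form an $\epsilon$-separated subset of $\mathcal{O}_{n+1}(T_1)$, contributing at most $s_n(T_1,\phi_{T_1},\epsilon)$; orbits in $E_1$ lie in $\mathcal{O}_{n+1}(T_2)$ and contribute at most $s_n(T_2,\phi_{T_2},\epsilon)$; and for $2\leq k\leq n+1$, every $\underline{x}\in E_k$ splits as a $T_1$-orbit $(x_1,\ldots,x_{k-1})\in \mathcal{O}_{k-1}(T_1)$, a single transitional pair $(x_{k-1},x_k)\in \mathcal{O}_2(T)$, and a $T_2$-orbit $(x_k,\ldots,x_{n+1})\in \mathcal{O}_{n-k+2}(T_2)$.

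The main obstacle is to control these middle terms, because the projections of $E_k$ onto its left and right halves need not themselves be $\epsilon$-separated. I would resolve this by a covering-and-grouping argument: take a maximal $\epsilon/3$-separated set $\hat R\subset \mathcal{O}_{n+2-k}(T_2)$ meeting the right-half projections of $E_k$; within each fiber the right halves all lie in a $2\epsilon/3$-ball, which forces the corresponding left halves to be $\epsilon$-separated in $\mathcal{O}_{k-1}(T_1)$. Replacing $S_{n-k+1}\phi$ on each right half by its value at the fiber centre (at cost at most $(n-k+1)\Delta(\phi,\epsilon/3)$ by uniform continuity of $\phi$) and absorbing the single transitional pair into $e^{\|\phi\|_\infty}$ gives
\[
\sum_{\underline{x}\in E_k}e^{S_n\phi(\underline{x})}\;\leq\; e^{\|\phi\|_\infty+(n-k+1)\Delta(\phi,\epsilon/3)}\cdot s_{k-2}(T_1,\phi_{T_1},\epsilon)\cdot s_{n-k+1}(T_2,\phi_{T_2},\epsilon/3).
\]
Summing over $k\in\{1,\ldots,n+2\}$ produces a polynomial factor times a convolution of the form $\sum_{j=0}^{n-1}\alpha_j\beta_{n-1-j}$ to which Lemma \ref{lem:limit lemma} applies, after absorbing the factor $e^{(n-k+1)\Delta(\phi,\epsilon/3)}$ into the $T_2$-sequence. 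Taking $\limsup_n \frac{1}{n}\log$ and then letting $\epsilon\to 0$ makes the variation $\Delta(\phi,\epsilon/3)$ vanish by uniform continuity of $\phi$ on the compact set $\mathcal{O}_2(T)$, yielding $\Ptop(T,\phi)\leq\max\{\Ptop(T_1,\phi_{T_1}),\Ptop(T_2,\phi_{T_2})\}$.
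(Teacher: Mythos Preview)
Your proposal is correct and follows essentially the same strategy as the paper: decompose $\mathcal{O}_{n+1}(T)$ according to the index at which an orbit enters $X_2$, use a maximal separated set to cover one half of each orbit so that $\epsilon$-separation is forced onto the other half, and reduce to the convolution bound handled by Lemma~\ref{lem:limit lemma}. The only cosmetic difference is that the paper covers the $T_1$-prefixes with an $\epsilon/2$-net and obtains $\epsilon$-separation of the $T_2$-suffixes, whereas you cover the $T_2$-suffixes with an $\epsilon/3$-net and obtain $\epsilon$-separation of the $T_1$-prefixes; both directions work for the same reason.
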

\begin{proof}
Recall that
\begin{equation*}
\Ptop(T_1,\phi_{T_1}) = \lim_{\epsilon \to 0} \limsup_{n \to \infty} \frac{1}{n} \log \alpha(n,\epsilon),
\end{equation*}
where
\begin{equation*}
\alpha(n,\epsilon) = \sup_{E_n(\epsilon)} \sum_{\underline{x} \in E_n(\epsilon)} e^{S_n\phi_{T_1}(\underline{x})}=\sup_{E_n(\epsilon)} \sum_{\underline{x} \in E_n(\epsilon)} e^{S_n\phi(\underline{x})}
\end{equation*}
and $E_n(\epsilon)$ ranges over all $\epsilon$-separated subsets of $(\mathcal{O}_{n+1}(T_1), d_{n+1})$.
Meanwhile,
\begin{equation*}
\Ptop(T_2,\phi_{T_2}) = \lim_{\epsilon \to 0} \limsup_{n \to \infty} \frac{1}{n} \log \beta(n,\epsilon),
\end{equation*}
where
\begin{equation*}
\beta(n,\epsilon) = \sup_{F_n(\epsilon)} \sum_{\underline{x} \in F_n(\epsilon)} e^{S_n\phi_{T_2}(\underline{x})} =\sup_{F_n(\epsilon)} \sum_{\underline{x} \in F_n(\epsilon)} e^{S_n\phi(\underline{x})}
\end{equation*}
and $F_n(\epsilon)$ ranges over all $\epsilon$-separated subsets of $(\mathcal{O}_{n+1}(T_2), d_{n+1})$. Besides, we let $\alpha(0,\epsilon)$ and $\beta(0,\epsilon)$ denote the maximum cardinality among all $\epsilon$-separated subsets of $X$.

For any $n\in\N$ and $\epsilon$-separated set $W_n(\epsilon) \subset \mathcal{O}_{n+1}(T)$, we define for any $k\in\{-1,0,\ldots,n\}$ that
$$
W_{n,k}(\epsilon) := \left\{ (y_0,\ldots,y_n) \in W_n(\epsilon) :
y_i \in X\setminus X_2 \text{ for }i \leq k,\
y_i \in X_2 \text{ for }i > k
\right\}.
$$
Then it is obvious that
$$
W_n(\epsilon) = \bigcup_{k=-1}^n W_{n,k}(\varepsilon).
$$
Let $E_k(\epsilon/2)$ be a $\epsilon/2$-separated subset of $(\mathcal{O}_{k+1}(T_1), d_{k+1})$ with maximal cardinality.
For $(x_0,\ldots,x_k) \in E_k(\epsilon/2)$, we define
$$
W_{n,k,x_0,\ldots,x_k}(\epsilon) := \left\{ (y_0,\ldots,y_n) \in W_{n,k}(\epsilon) : d(x_i,y_i) < \epsilon/2 \text{ for all } 0\leq i \leq k \right\}.
$$
The maximality of $E_k(\epsilon/2)$ implies that
\begin{equation}\label{eq:definition of W}
W_{n,k}(\epsilon)=\bigcup_{(x_0,\ldots,x_k)\in E_k(\epsilon/2)} W_{n,k,x_0,\ldots,x_k}(\epsilon).
\end{equation}

Fix $0\leq k \leq n - 1$ and $(x_0,\cdots,x_k) \in E_k(\epsilon/2)$. For any $\underline{y} = (y_0,\cdots,y_n) \in W_{n,k,x_0,\ldots,x_k}(\epsilon)$, it can be shown that
$$
\sum_{j=0}^{k-1} \phi(y_j,y_{j+1})
\leq \sum_{j=0}^{k-1} \phi(x_j,x_{j+1})
+ k\Delta(\phi,\epsilon/2),
$$
where
$$
\Delta(\phi,\delta) = \sup\left\{ |\phi(x_1,x_2) - \phi(y_1,y_2)| : d(x_1,y_1) < \delta,\ d(x_2,y_2) < \delta \right\}\ \text{for all } \delta > 0.
$$
Therefore, we have
\begin{equation*}
S_n\phi(\underline{y}) \leq \sum_{j=0}^{k-1} \phi(x_j,x_{j+1}) + k\Delta\left(\phi,\frac{\epsilon}{2}\right) + \|\phi\|_\infty
+ \sum_{j=k+1}^{n-1}\phi(y_j,y_{j+1}).
\end{equation*}
Since $W_{n,k,x_0,\ldots,x_k}(\epsilon)$ is an $\epsilon$-separated subset of $\mathcal{O}_{n+1}(T)$, for any two distinct orbits $(y_0,\ldots,y_n)$, $(z_0,\ldots,z_n) \in W_{n,k,x_0,\ldots,x_k}(\epsilon)$, there exists $l \in \{0,\ldots,n\}$ such that $d(y_l,z_l) \geq \epsilon$. It is not difficult to verify that this $l$ must belong to $\{k+1,\ldots,n\}$.
Therefore, the projection set
$$
\left\{ (y_{k+1},\ldots,y_n) : (y_0,\ldots,y_n) \in W_{n,k,x_0,\ldots,x_k}(\epsilon) \right\}
$$
forms an $\epsilon$-separated subset of $\mathcal{O}_{n-k}(T_2)$.
Thus,
$$
\sum_{\underline{y}\in W_{n,k,x_0,\ldots,x_k}(\epsilon)} e^{S_n\phi(\underline{y})}
\leq \beta(n-k-1,\epsilon)\exp\left( \sum_{j=0}^{k-1} \phi(x_j,x_{j+1}) + k\Delta\left(\phi,\frac{\epsilon}{2}\right) + \|\phi\|_\infty \right),
$$
which together with \eqref{eq:definition of W} implies that
\begin{equation*}
\sum_{\underline{y}\in W_{n,k}(\epsilon)} e^{S_n\phi(\underline{y})}\leq \alpha\left(k, \frac{\epsilon}{2}\right) \beta(n - k - 1, \epsilon) \exp\left(n\Delta\left(\phi, \frac{\epsilon}{2}\right) + \|\phi\|_\infty \right).
\end{equation*}

Next we consider $k = -1$ and $k = n$ independently. Note that
$$
W_{n,-1}(\epsilon) = \left\{ (y_0,\ldots,y_n) \in W_n(\epsilon) : y_i \in X_2 \ \text{for all}\ i =0,\ldots,n \right\}
$$
is $\epsilon$-separated in $\mathcal{O}_{n+1}(T_2)$, and
$$
W_{n,n}(\epsilon) = \left\{ (y_0,\ldots,y_n) \in W_n(\epsilon) : y_i \in X_1 \ \text{for all}\ i =0,\ldots,n \right\}
$$
is $\epsilon$-separated in $\mathcal{O}_{n+1}(T_1)$. Hence, we have
$$
\sum_{\underline{y} \in W_{n,-1}(\epsilon)} \exp(S_n\phi(\underline{y})) \leq \beta(n, \epsilon)
$$
and
$$
\sum_{\underline{y} \in W_{n,n}(\epsilon)} \exp(S_n\phi(\underline{y})) \leq \alpha(n, \epsilon).
$$

Furthermore, from $W_n(\epsilon) = \bigcup_{k=-1}^n W_{n,k}(\epsilon)$, we conclude that
\[
\sum_{\underline{y} \in W_n(\epsilon)} \exp(S_n\phi(\underline{y})) \leq \alpha(n,\epsilon) + \beta(n,\epsilon) + e^{n\Delta(\phi,\frac{\epsilon}{2})+\|\phi\|_\infty} \sum_{k=0}^{n-1} \alpha\left(k, \frac{\epsilon}{2}\right)\beta(n-k-1,\epsilon).
\]
According to Lemma \ref{lem:limit lemma}, it follows immediately that
\begin{align*}
P(T, \phi) &= \lim_{\epsilon \to 0} \limsup_{n \to \infty} \frac{1}{n} \log \left(\sup_{W_n(\epsilon)}\sum_{\underline{y} \in W_n(\epsilon)} \exp(S_n\phi(\underline{y}))\right) \\
&\leq \max\left\{ \Ptop(T_1,\phi_{T_1}), \Ptop(T_2,\phi_{T_2}) \right\}.
\end{align*}

On the other hand, it is straightforward to observe that for any $n \in \mathbb{N}$ and $\epsilon > 0$, each $\epsilon$-separated subset of $\mathcal{O}_{n+1}(T_1)$ is also an $\epsilon$-separated subset of $\mathcal{O}_{n+1}(T)$. Thus,
$$
\Ptop(T, \phi) \geq \Ptop(T_1, \phi|_{\mathcal{O}_2(T_1)})
=\Ptop(T_1, \phi|_{T_1}).
$$
Similarly, one has
$$
\Ptop(T, \phi) \geq \Ptop(T_2, \phi|_{T_2}).
$$
So
$$
P(T, \phi)\geq \max\left\{ \Ptop(T_1,\phi_{T_1}), \Ptop(T_2,\phi_{T_2}) \right\}.
$$
\end{proof}

From the preceding lemma, we deduce the following theorem.

\begin{thm}[Pressure formula for decompositions]
\label{thm:Entropy formula for subsystems}
Let $(X, d)$ be a compact metric space, $T$ be a correspondence on $X$ generated by $(X_1, T_1)\to (X_2, T_2)\to\cdots \to(X_d, T_d), d\geq 2$, and $\phi: \mathcal{O}_2(T) \rightarrow \mathbb{R}$ be a continuous function. Then
\[
\Ptop(T,\phi)=\max_{1\leq i\leq d}\{\Ptop(T_i,\phi_{T_i})\}.
\]
\end{thm}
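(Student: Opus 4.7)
The plan is to proceed by induction on $d$, using Lemma~\ref{lem:Entropy formula for two subsystems} as both the base case $d=2$ and the engine of the inductive step. The inductive step rests on a single natural idea: \emph{group the first $d-1$ pieces together}. Concretely, for $d\geq 3$ I would set
\[
Y:=\bigcup_{i=1}^{d-1}X_i,\qquad S:=T|_Y,
\]
so that $T$ is exhibited as a two-piece decomposition $(Y,S)\to(X_d,T_d)$, while $S$ itself inherits a $(d-1)$-piece decomposition $(X_1,T_1)\to\cdots\to(X_{d-1},T_{d-1})$ on $Y$.

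To make this work I must verify that these two decompositions satisfy the five axioms. $Y$ is closed as a finite union of closed sets; $S$ is automatically a correspondence on $Y$ by the closed-restriction remark following the definition of correspondence; and the identities $T|_Y=S$ and $T|_{X_d}=T_d$ are immediate from the definitions. The only nontrivial axiom for the two-piece decomposition is the flow condition $T(X_d)\cap(Y\setminus X_d)=\emptyset$, which I deduce from the hypothesized conditions $T(X_d)\cap(X_j\setminus X_d)=\emptyset$ for $j=1,\ldots,d-1$ by writing $Y\setminus X_d=\bigcup_{j=1}^{d-1}(X_j\setminus X_d)$ and taking unions. For the inherited decomposition of $S$ on $Y$, the identity $S|_{X_i}=T_i$ follows from $X_i\subset Y$ via $S(x)\cap X_i=T(x)\cap Y\cap X_i=T(x)\cap X_i=T_i(x)$, and the flow conditions among the $X_i$ for $i\leq d-1$ are inherited verbatim from those for $T$.

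With both decompositions verified, Lemma~\ref{lem:Entropy formula for two subsystems} applied to $(Y,S)\to(X_d,T_d)$ gives
\[
\Ptop(T,\phi)=\max\{\Ptop(S,\phi_S),\,\Ptop(T_d,\phi_{T_d})\},
\]
where $\phi_S:=\phi|_{\mathcal{O}_2(S)}$. Observing that $(\phi_S)_{T_i}=\phi_{T_i}$ for each $i\leq d-1$ (both are the restriction of the original $\phi$ to $\mathcal{O}_2(T_i)$), the induction hypothesis applied to $S$ on $Y$ yields $\Ptop(S,\phi_S)=\max_{1\leq i\leq d-1}\Ptop(T_i,\phi_{T_i})$, and combining the two equalities produces the desired formula.

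I expect no substantive obstacle beyond the bookkeeping required to confirm that the grouped structure still satisfies the five axioms of a decomposition; all the genuine dynamical content is carried by the $d=2$ case already handled in Lemma~\ref{lem:Entropy formula for two subsystems}. The only point where I would pause to double-check is the flow-condition unioning step, but this is a one-line set-theoretic identity rather than a real difficulty.
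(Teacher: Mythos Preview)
Your proposal is correct and follows essentially the same inductive strategy as the paper, with the cosmetic difference that you peel off the last piece (grouping $X_1,\dots,X_{d-1}$ into $Y$) whereas the paper peels off from the front (grouping $X_1\cup X_2$ into $Y$ and viewing $T$ as generated by $(Y,S)\to(X_3,T_3)\to\cdots\to(X_d,T_d)$). Both routes reduce to Lemma~\ref{lem:Entropy formula for two subsystems} plus the induction hypothesis, and the axiom-checking you outline is exactly what is needed.
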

\begin{proof}
When $d=2$, the result follows from Lemma \ref{lem:Entropy formula for two subsystems}. Assume that the conclusion holds for $d=k$, we shall prove that it is also true for $d=k+1$.

Denote $Y=X_1\cup X_2$ and define $S: Y\to \mathcal{F}(Y)$ by $S(x)=T(x)\cap Y$ for all $x\in Y$. Clearly, $S$ is a correspondence on $Y$ and
$T$ is generated by $(Y, S)\to (X_3, T_3)\to\cdots \to(X_d, T_d)$.
By the inductive hypothesis, we have
\[
\Ptop(T,\phi)=\max\left\{\Ptop(S,\phi_{S}), \max_{2\leq i\leq d}\left\{\Ptop(T_i,\phi_{T_i})\right\}\right\}.
\]
It remains to prove that
\[
\Ptop(S,\phi_{S})=\max\{\Ptop(T_1,\phi_{T_1}),\Ptop(T_2,\phi_{T_2})\}.
\]

For $i=1,2$ and $x\in X_i$, we get $S(x)\cap X_i=T(x)\cap Y \cap X_i=T_i(x)\cap Y=T_i(x)$, which gives $S|_{X_i}=T_i$.
Besides, for $x\in X_2$ we have
$$
S(x)=T(x)\cap Y\subset (X_2\cup X_1^c)\cap Y=X_2.
$$
So $S$ is generated by $(X_1, T_1)\to (X_2, T_2)$.
Denote by $\varphi=\phi_S$. Then for any
\[
(x_1,x_2)\in \mathcal{O}_2(T_1)
\subset \mathcal{O}_2(S)\subset \mathcal{O}_2(T),
\]
it holds that
\[
\varphi_{T_1}(x_1,x_2)=\varphi(x_1,x_2)=\phi_S(x_1,x_2)
=\phi(x_1,x_2)=\phi_{T_1}(x_1,x_2).
\]
This leads to $\varphi_{T_1}=\phi_{T_1}$.
Similarly, one can get $\varphi_{T_2}=\phi_{T_2}$.
Applying Lemma \ref{lem:Entropy formula for two subsystems}, we derive
\[
\Ptop(S,\phi_{S})=\Ptop(S,\varphi)
=\max\{\Ptop(T_1,\phi_{T_1}),\Ptop(T_2,\phi_{T_2})\}.
\]
This ends the proof.
\end{proof}

The following two lemmas are essential to this section's main result.
\begin{lem}\label{lem:key-extreme-relation1}
Let $T$ be a correspondence on a compact metric space $(X, d)$, $f$ be a continuous self-map on a closed subset $Y\subset X$ and $\phi: \mathcal{O}_2(T) \rightarrow \mathbb{R}$ be a continuous function.
If $T|_Y=\C_f$ and $\mu\in\mathcal{P}_f^e(Y)$, then we have $\hat{\mu}\in\mathcal{P}_T^e(X)$ and
\[
P_{\hat{\mu}}(T,\phi)=P_\mu(f,\phi_f):=h_\mu(f)+\int_Y\phi_f\,d\mu,
\]
where $\phi_f$ is the continuous function on $Y$ defined by
$\phi_f(x):=\phi(x,f(x))$ for all $x\in Y$
and
$\hat{\mu}$ is the probability measure on $X$ defined by
\[
\hat{\mu}(A):=\mu(A\cap Y) \text{ for any }A\in \mathscr{B}(X).
\]
\end{lem}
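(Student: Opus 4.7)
The plan is to exploit the hypothesis $T|_Y=\C_f$, which pins every dynamics compatible with $\hat{\mu}$ to the deterministic map $f$ on the support of $\hat{\mu}$, and to proceed in three steps: $T$-invariance of $\hat{\mu}$, extremality, and then the pressure identity.

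\emph{Step 1 ($T$-invariance of $\hat{\mu}$).} I will verify condition (i) of Lemma \ref{lem:invariant-measure-characterization-correspondence}. For every Borel $A\subset X$, since $f(x)\in T(x)$ for each $x\in Y$, one has $f^{-1}(A\cap Y)\subset T^{-1}(A)\cap Y$; combined with the $f$-invariance of $\mu$ on $Y$ this yields
\[
\hat{\mu}(A)=\mu(A\cap Y)=\mu(f^{-1}(A\cap Y))\leq \mu(T^{-1}(A)\cap Y)=\hat{\mu}(T^{-1}(A)),
\]
so $\hat{\mu}\in\mathcal{P}_T(X)$.

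\emph{Step 2 (extremality).} Suppose $\hat{\mu}=t\nu_1+(1-t)\nu_2$ with $\nu_1,\nu_2\in\mathcal{P}_T(X)$ and $t\in(0,1)$. Since $\hat{\mu}(X\setminus Y)=0$, both $\nu_i$ are supported on $Y$. The key identity $T^{-1}(A)\cap Y=f^{-1}(A)$ holds for every Borel $A\subset Y$, because $T(x)\cap Y=\{f(x)\}$ for $x\in Y$; this together with $\nu_i(X\setminus Y)=0$ gives $\nu_i(A)\leq\nu_i(T^{-1}(A))=\nu_i(f^{-1}(A))$. Applying the same bound to $Y\setminus A$ and using $\nu_i(Y)=1$ with $f^{-1}(Y\setminus A)=Y\setminus f^{-1}(A)$ forces equality, so $\nu_i|_Y$ is $f$-invariant. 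Since ergodic measures are extreme in $\mathcal{P}_f(Y)$ and $\mu=t\nu_1|_Y+(1-t)\nu_2|_Y$, I conclude $\nu_1|_Y=\nu_2|_Y=\mu$, hence $\nu_1=\nu_2=\hat{\mu}$.

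\emph{Step 3 (pressure identity).} For any $\mathcal{Q}\in\mathcal{K}_{\hat{\mu}}$, combining $\mathcal{Q}_x(T(x))=1$ with $(\hat{\mu}\mathcal{Q})(X\setminus Y)=\hat{\mu}(X\setminus Y)=0$ forces $\mathcal{Q}_x(T(x)\cap Y)=\mathcal{Q}_x(\{f(x)\})=1$ for $\mu$-a.e.\ $x\in Y$, so $\mathcal{Q}_x=\delta_{f(x)}$ off a $\mu$-null set. Lemma \ref{lem:a integral equality} then reduces the integral term to $\int_Y\phi_f\,d\mu$. For the entropy, an induction based on Definition \ref{Q^[n]} shows that $\hat{\mu}\mathcal{Q}^{[n-1]}$ coincides with the law of $(x,f(x),\ldots,f^{n-1}(x))$ under $\mu$, so for any finite Borel partition $\mathcal{A}$ of $X$,
\[
H_{\hat{\mu}\mathcal{Q}^{[n-1]}}(\mathcal{A}^n)=H_\mu\Bigl(\bigvee_{i=0}^{n-1}f^{-i}(\mathcal{A}|_Y)\Bigr).
\]
Dividing by $n$, passing to the limit, and taking the supremum over $\mathcal{A}$ (observing that partitions of $X$ and $Y$ yield the same supremum because $\hat{\mu}(X\setminus Y)=0$) gives $h_{\hat{\mu}}(\mathcal{Q})=h_\mu(f)$. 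Summing the two pieces produces $P_{\hat{\mu}}(T,\phi)=h_\mu(f)+\int_Y\phi_f\,d\mu=P_\mu(f,\phi_f)$.

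\emph{Main obstacle.} The delicate point is the entropy identification in Step 3: I must verify by induction on $n$ that the Dirac character of $\mathcal{Q}_x=\delta_{f(x)}$ collapses the iterated construction of Definition \ref{Q^[n]} to the iterated pullback by $f$, and then match the supremum over $X$-partitions with the classical supremum over $Y$-partitions for $(f,\mu)$; the other two steps are essentially direct consequences of the inclusion $\{f(x)\}=T(x)\cap Y$ combined with Lemma \ref{lem:invariant-measure-characterization-correspondence}.
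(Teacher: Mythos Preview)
Your proposal is correct and follows essentially the same skeleton as the paper: verify $T$-invariance of $\hat{\mu}$, deduce extremality by restricting any convex decomposition to $Y$ and using that $T^{-1}(A)\cap Y=f^{-1}(A)$ forces $f$-invariance, and for the pressure identity show every $\mathcal{Q}\in\mathcal{K}_{\hat{\mu}}$ satisfies $\mathcal{Q}_x=\delta_{f(x)}$ for $\mu$-a.e.\ $x$ so both entropy and integral collapse to their single-map values. The only noteworthy differences are cosmetic: the paper obtains $T$-invariance by exhibiting an explicit kernel (a measurable selection off $Y$) and invoking condition~(ii) of Lemma~\ref{lem:invariant-measure-characterization-correspondence}, while you use condition~(i) directly; and for the entropy identification $h_{\hat{\mu}}(\mathcal{Q})=h_\mu(f)$ the paper black-boxes the computation through \cite[Lemma~5.28]{LLZ23}, whereas you propose the hands-on induction on $\mathcal{Q}^{[n]}$---your identified ``main obstacle'' is exactly what that cited lemma packages, so carrying it out (taking care that the $\mu$-null exceptional set is stable under $f^{-1}$ by $f$-invariance of $\mu$) recovers the same conclusion.
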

\begin{proof}
The proof proceeds in following two steps.

\emph{Step 1. Given $\mu\in\mathcal{P}_f(Y)$, prove that $\hat{\mu}$ is a $T$-invariant Borel probability measure on $X$ satisfying $P_{\hat{\mu}}(T,\phi)=h_\mu(f)+\int_Y\phi_f\,d\mu$.}

Let $\mathcal{Q}_f$ be the transition probability kernel on $Y$ induced by $f$ (see \cite[Definition B.1]{LLZ23}). By \cite[Lemma B.2 and (B.5)]{LLZ23},
we conclude that $\mu$ is $\mathcal{Q}_f$-invariant and $h_{\mu}(f) = h_{\mu}(\mathcal{Q}_f)$. Moreover, by \cite[Lemma 1.1]{MA99}, one can choose a Borel measurable selection map $t_1: X\to X$ for $T$. Next, for any $x\in A$ and any $A\in\mathscr{B}(X)$, define
$$
\mathcal{Q}(x, A) = \begin{cases}
1_A(f(x)) & \text{ if } x \in Y, \\
1_A(t_1(x)) & \text{ if } x \in X\setminus Y.
\end{cases}
$$
We observe that $\mathcal{Q}$ is a transition probability kernel on $X$
supported by $T$. Besides, it follows from \cite[Lemma 5.28]{LLZ23} that
$\hat{\mu}$ is $\mathcal{Q}$-invariant.
According to Lemma \ref{lem:invariant-measure-characterization-correspondence}, we deduce that $\hat{\mu}$ is $T$-invariant.

Let $\mathcal{S}$ be a transition probability kernel on $X$
supported by $T$ satisfying $\hat{\mu}\mathcal{S}=\hat{\mu}$. Then from
\begin{align*}
1=\hat{\mu}(Y)=(\hat{\mu}\mathcal{S})(Y)
&=\int_X\mathcal{S}(x,Y)\,d\hat{\mu}(x)\\
&=\int_Y\mathcal{S}(x,Y)\,d\mu(x)
\end{align*}
we conclude that the equality $\mathcal{S}(x,Y)=1$ holds for $\mu$-almost
every $x\in Y$. Consequently,
\[
\mathcal{S}_x(f(x))=\mathcal{S}_x(T(x)\cap Y)=1\text{ for } \mu\text{-almost
every }x\in Y.
\]
It follows that
$$\mathcal{S}_x(A)=\delta_{f(x)}(A)=\mathcal{Q}_f(x,A)$$
holds for all $A\in \mathscr{B}(Y)$ and $\mu$-almost every $x\in Y$. Then \cite[Lemma 5.28]{LLZ23} yields
$$
h_{\hat{\mu}}(\mathcal{S}) = h_{\mu}(\mathcal{Q}_f)= h_{\mu}(f).
$$
Furthermore, we derive
\begin{align*}
\int_X \int_{T(x_1)} \phi (x_1, x_2) \, d\mathcal{S}_{x_1}(x_2) \,
d\hat{\mu} (x_1)
&=\int_Y \int_{T(x_1)} \phi (x_1, x_2) \, d\delta_{f(x_1)}(x_2) \,d\hat{\mu} (x_1)\\
&=\int_Y \phi (x_1, f(x_1)) \,d \mu (x_1)\\
&=\int_Y \phi_f \,d\mu.
\end{align*}
Finally, since the choice of $\mathcal{S}$ was arbitrary, we obtain
\[
P_{\hat{\mu}}(T,\phi)=\sup_{\mathcal{S}\in \mathcal{K}_{\hat{\mu}}}\left\{h_{\hat{\mu}} (\mathcal{S})+ \int_X \int_{T(x_1)} \phi (x_1, x_2) \, d\mathcal{S}_{x_1}(x_2) \, d\hat{\mu} (x_1)\right\}= h_\mu(f)+\int_Y\phi_f\,d\mu.
\]

\emph{Step 2. Given $\mu\in\mathcal{P}_f^e(Y)$, prove that $\hat{\mu}\in\mathcal{P}_T^e(X)$.}

We use proof by contradiction to prove this assertion. Denote by $\nu=\hat{\mu}$ and assume that there exist
$p\in(0,1)$ and $\nu_1, \nu_2\in\mathcal{P}_T(X)$ such that
$\nu=p\nu_1+(1-p)\nu_2$. From $v(Y)=1$ it is not difficult to verify that $\nu_1(Y)=\nu_2(Y)=1$. Define $\nu_1|_Y, \nu_2|_Y\in\mathcal{P}(Y)$ as follows:
\[
\nu_1|_Y(A):=\frac{\nu_1(A)}{\nu_1(Y)}=\nu_1(A),~
\nu_2|_Y(A):=\frac{\nu_2(A)}{\nu_2(Y)}=\nu_2(A) \text{ for } A\in \mathscr{B}(Y).
\]
By the property of the correspondence $T$, it follows directly that for every $A\in \mathscr{B}(Y)$,
\begin{align*}
T^{-1}(A)\cap Y&=\{x\in X:~T(x)\cap A\neq\emptyset\}\cap Y\\
&=\{x\in Y:~T(x)\cap A\cap Y\neq\emptyset\}\\
&=\{x\in Y:~f(x)\in A\}\\
&=f^{-1}(A).
\end{align*}
Since $\nu_1$ is $T$-invariant we obtain
\[
\nu_1(A)\leq\nu_1(T^{-1}(A))=\nu_1(T^{-1}(A)\cap Y)=\nu_1(f^{-1}(A))
\text{ for any }A\in \mathscr{B}(Y),
\]
which means that
\[
\nu_1|_Y(A)\leq\nu_1|_Y(f^{-1}(A))
\]
holds for any $A\in \mathscr{B}(Y)$.

Furthermore, for $A\in \mathscr{B}(Y)$ let
\[
\omega(A):=\nu_1|_Y(f^{-1}(A))-\nu_1|_Y(A)\geq0.
\]
For disjoint Borel subsets $A$ and $B$ in $Y$, we can verify the following equation:
\begin{align*}
\omega(A\cup B)&=\nu_1|_Y(f^{-1}(A\cup B))-\nu_1|_Y(A\cup B)\\
&=\nu_1|_Y(f^{-1}(A))+\nu_1|_Y(f^{-1}(B))-\nu_1|_Y(A)-\nu_1|_Y (B)\\
&=\omega(A)+\omega(B).
\end{align*}
Therefore, for any $A\in \mathscr{B}(Y)$ we can get
\[
\omega(A)\leq\omega(Y)=\nu_1|_Y(f^{-1}(Y))-\nu_1|_Y(Y)=0,
\]
which yields that
\[
\nu_1|_Y(A)=\nu_1|_Y(f^{-1}(A)).
\]
So $\nu_1|_Y$ is $f$-invariant. Similarly, it can be shown that $\nu_2|_Y$ is also $f$-invariant.

Now, it follows from $\nu=p\nu_1+(1-p)\nu_2$ that
$\mu=\nu|_Y=p\nu_1|_Y+(1-p)\nu_2|_Y$. Since $\nu_1|_Y, \nu_2|_Y\in\mathcal{P}_f(Y)$ it means that $\mu$ is not an extreme
point of $\mathcal{P}_f(Y)$, a contradiction.
\end{proof}

\begin{lem}\label{lem:key-extreme-relation2}
Let $T$ be a correspondence on a compact metric space $(X, d)$ satisfying $T(X)=X$, $g$ be a continuous self-map on a closed subset $Z\subset X$ and $\phi: \mathcal{O}_2(T) \rightarrow \mathbb{R}$ be a continuous function.
If $T|_Z=\C_g^{-1}$ and $\mu\in\mathcal{P}_g^e(Z)$, then we have $\hat{\mu}\in\mathcal{P}_T^e(X)$ and
\[
P_{\hat{\mu}}(T,\phi)=P_\mu(g,\phi_g):=h_\mu(g)+\int_Z\phi_g\,d\mu,
\]
where $\phi_g$ is the continuous function on $Z$ defined by
$\phi_g(x):=\phi(g(x),x)$ for all $x\in Z$
and $\hat{\mu}$ is the probability measure on $X$ defined by
\[
\hat{\mu}(A):=\mu(A\cap Z) \text{ for any }A\in \mathscr{B}(X).
\]
\end{lem}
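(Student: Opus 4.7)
The plan is to mirror the proof of Lemma~\ref{lem:key-extreme-relation1}, replacing the deterministic forward kernel $\mathcal{Q}_f$ used there by a \emph{backward} kernel for $g$ built from disintegration of $\mu$. The hypothesis $T|_Z=\C_g^{-1}$ gives $T(z)\cap Z=g^{-1}(z)$ for $z\in Z$, so any transition kernel on $X$ supported by $T$ and charging $Z$ must, when restricted to $Z$, be a measure on the fibres of $g$. The disintegration theorem on the compact metric space $Z$, applied to the $g$-invariant measure $\mu$, produces a $\mu$-measurable family $\{\mu^z\}_{z\in Z}$ of probability measures with $\mu^z(g^{-1}(z))=1$ and $\mu=\int_Z\mu^z\,d\mu(z)$. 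I define $\mathcal{Q}$ on $X$ by $\mathcal{Q}(x,A):=\mu^x(A\cap Z)$ for $x\in Z$ and $\mathcal{Q}(x,A):=\delta_{t_1(x)}(A)$ for $x\in X\setminus Z$, where $t_1$ is a Borel selection of $T$ as in \cite[Lemma 1.1]{MA99}. A routine verification shows that $\mathcal{Q}$ is a transition kernel supported by $T$, and the disintegration identity yields $\hat{\mu}\mathcal{Q}=\hat{\mu}$, giving $\hat{\mu}\in\mathcal{P}_T(X)$ by Lemma~\ref{lem:invariant-measure-characterization-correspondence}.

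The crucial observation for the upper bound is that $\hat{\mu}\mathcal{S}^{[1]}$ is the same Borel probability measure on $\mathcal{O}_2(T)$ for every $\mathcal{S}\in\mathcal{K}_{\hat{\mu}}$: the identities $1=\hat{\mu}(Z)=(\hat{\mu}\mathcal{S})(Z)=\int_Z\mathcal{S}(x,Z)\,d\mu(x)$ force $\mathcal{S}_x(g^{-1}(x))=1$ for $\mu$-a.e.\ $x\in Z$, so a pair $(X_0,X_1)\sim\hat{\mu}\mathcal{S}^{[1]}$ satisfies $X_1\sim\mu$ and $X_0=g(X_1)$ a.s., making $\hat{\mu}\mathcal{S}^{[1]}$ equal to the pushforward of $\mu$ under $x\mapsto(g(x),x)$. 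Uniqueness of the disintegration then forces $\mathcal{S}_x|_Z=\mu^x$ for $\mu$-a.e.\ $x\in Z$, so every $\mathcal{S}\in\mathcal{K}_{\hat{\mu}}$ agrees with $\mathcal{Q}$ along $Z$ up to $\mu$-null sets, giving $h_{\hat{\mu}}(\mathcal{S})=h_{\hat{\mu}}(\mathcal{Q})$. Combining with
\[
\int_X\int_{T(x_1)}\phi(x_1,x_2)\,d\mathcal{S}_{x_1}(x_2)\,d\hat{\mu}(x_1)=\int_Z\phi(g(x),x)\,d\mu(x)=\int_Z\phi_g\,d\mu,
\]
via Lemma~\ref{lem:a integral equality}, reduces the pressure computation to the single identity $h_{\hat{\mu}}(\mathcal{Q})=h_\mu(g)$, which I plan to establish by recognising $(\mathcal{O}_\omega(T),\sigma,\hat{\mu}\mathcal{Q}^\omega)$ as a realisation of the natural extension of $(Z,g,\mu)$ (indeed $g(x_{n+1})=x_n$ holds $\hat{\mu}\mathcal{Q}^\omega$-a.e.) and invoking the classical fact that natural extensions preserve Kolmogorov--Sinai entropy.

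For the extreme-point assertion, I argue by contradiction as in Lemma~\ref{lem:key-extreme-relation1} step~2. If $\hat{\mu}=p\nu_1+(1-p)\nu_2$ with $p\in(0,1)$ and $\nu_1,\nu_2\in\mathcal{P}_T(X)$, then $\nu_i(Z)=1$; picking kernels $\mathcal{Q}_i$ on $X$ supported by $T$ with $\nu_i\mathcal{Q}_i=\nu_i$, the same forcing argument gives $\mathcal{Q}_{i,x}(g^{-1}(x))=1$ for $\nu_i$-a.e.\ $x\in Z$. For any $\varphi\in C(Z)$ then
\[
\int_Z\varphi\circ g\,d\nu_i=\int_Z\int_X\varphi(g(y))\,d\mathcal{Q}_{i,x}(y)\,d\nu_i(x)=\int_Z\varphi\,d\nu_i,
\]
so $\nu_i|_Z$ is $g$-invariant, whence $\mu=p\,\nu_1|_Z+(1-p)\,\nu_2|_Z$ contradicts $\mu\in\mathcal{P}_g^e(Z)$. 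The main obstacle is the entropic identification $h_{\hat{\mu}}(\mathcal{Q})=h_\mu(g)$: making the natural-extension correspondence precise within the partition-based framework of \cite{LLZ23}—in particular, matching the limit $\frac{1}{n}H_{\hat{\mu}\mathcal{Q}^{[n-1]}}(\mathcal{A}^n)$ with the forward entropy of $(Z,g,\mu)$—will require some care; once this is in place, the rest of the argument is a straightforward adaptation of the Lemma~\ref{lem:key-extreme-relation1} template.
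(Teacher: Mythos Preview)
Your approach is sound and all the pieces you outline are correct, but it diverges from the paper's proof in one substantive place: the entropy identity $h_{\hat\mu}(\mathcal{S})=h_\mu(g)$ for $\mathcal{S}\in\mathcal{K}_{\hat\mu}$. You build a kernel supported by $T$ directly via disintegration of $\mu$ along $g$, and then plan to identify $(\mathcal{O}_\omega(T),\sigma,\hat\mu\mathcal{Q}^\omega)$ with the natural extension of $(Z,g,\mu)$. This works, but it requires two auxiliary facts you must supply: that $h_{\hat\mu}(\mathcal{Q})=h_{\hat\mu\mathcal{Q}^\omega}(\sigma)$ in the partition framework of \cite{LLZ23}, and that $\hat\mu\mathcal{Q}^\omega$ really is the natural-extension lift of $\mu$. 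The paper sidesteps this entirely by a time-reversal trick: given any $\mathcal{S}\in\mathcal{K}_{\hat\mu}$, it passes to the \emph{backward conditional} kernel $\mathcal{L}$ of $\hat\mu\mathcal{S}^{[1]}$ (so $(\hat\mu\mathcal{S}^{[1]})\circ\gamma_2^{-1}=\hat\mu\mathcal{L}^{[1]}$, with $\mathcal{L}$ supported by $T^{-1}$), invokes \cite[Proposition~5.23]{LLZ23} to get $h_{\hat\mu}(\mathcal{S})=h_{\hat\mu}(\mathcal{L})$, and then observes that the same ``$\mathcal{L}_x(Z)=1$ for $\mu$-a.e.\ $x$'' argument forces $\mathcal{L}_x=\delta_{g(x)}$, so $h_{\hat\mu}(\mathcal{L})=h_\mu(\mathcal{Q}_g)=h_\mu(g)$ by \cite[Lemma~5.28 and (B.5)]{LLZ23}. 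This is shorter and stays entirely inside the machinery already developed in \cite{LLZ23}; your route is more self-contained but you will have to do the natural-extension bookkeeping yourself. For Step~2 (the extreme-point claim) your kernel-based argument and the paper's set-theoretic one (computing $T^{-1}(A)\cap Z=g(A)$ and using the defining inequality of $T$-invariance) are equally valid variants.
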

\begin{proof}
We proceed to prove this conclusion through the following two steps.

\emph{Step 1. Given $\mu\in\mathcal{P}_g(Z)$, prove that $\hat{\mu}$ is a $T$-invariant Borel probability measure on $X$ satisfying $P_{\hat{\mu}}(T,\phi)=h_\mu(g)+\int_Z\phi_g\,d\mu$.}

Let $\mathcal{Q}_g$ be the transition probability kernel on $Y$ induced by $g$ (see \cite[Definition B.1]{LLZ23}). By \cite[Lemma B.2 and (B.5)]{LLZ23},
we conclude that $\mu$ is $\mathcal{Q}_g$-invariant and $h_{\mu}(g) = h_{\mu}(\mathcal{Q}_g)$. Moreover, by \cite[Lemma 1.1]{MA99}, one can choose a Borel measurable selection map $t_2: X\to X$ for $T^{-1}$. Next, for any $x\in A$ and any $A\in\mathscr{B}(X)$, define
$$
\mathcal{Q}(x, A) = \begin{cases}
1_A(g(x)) & \text{ if } x \in Z, \\
1_A(t_2(x)) & \text{ if } x \in X\setminus Z.
\end{cases}
$$
We observe that $\mathcal{Q}$ is a transition probability kernel on $X$
supported by $T^{-1}$. By \cite[Lemma 6.13]{LLZ23}, the measure $\hat{\mu}\mathcal{Q}^{[1]}$ is supported on $\mathcal{O}_2(T^{-1})$.
Moreover, \cite[Lemma 5.28]{LLZ23} establishes that
$\hat{\mu}$ is $\mathcal{Q}$-invariant.
From \cite[(A.10)]{LLZ23}, it follows that $(\hat{\mu}\mathcal{Q}^{[1]})\circ\tilde{\pi}_2^{-1}
=\hat{\mu}\mathcal{Q}=\hat{\mu}$. Furthermore, in light of \cite[Proposition A.11 and Definition A.14]{LLZ23}, one can choose a backward conditional transition probability kernel $\mathcal{R}$ of $\hat{\mu}\mathcal{Q}^{[1]}$ from
$X$ to $X$, supported by $\mathcal{O}_2(T^{-1})$.
The transition probability kernel $\mathcal{R}$ is supported by $T$ and satisfies
$(\hat{\mu}\mathcal{Q}^{[1]})\circ\gamma_2^{-1}
=\hat{\mu}\mathcal{R}^{[1]}$, which together with \cite[Proposition 5.23]{LLZ23} yields that
$\hat{\mu}$ is $\mathcal{R}$-invariant. Finally,
by Lemma \ref{lem:invariant-measure-characterization-correspondence}, we conclude that $\hat{\mu}$ is $T$-invariant.

Let $\mathcal{S}$ be a transition probability kernel on $X$
supported by $T$ satisfying $\hat{\mu}\mathcal{S}=\hat{\mu}$.
By \cite[Lemma 6.13]{LLZ23} and \cite[(A.10)]{LLZ23}, we deduce that
$\hat{\mu}\mathcal{S}^{[1]}$ is supported on $\mathcal{O}_2(T)$ and
$(\hat{\mu}\mathcal{S}^{[1]})\circ\tilde{\pi}_2^{-1}
=\hat{\mu}\mathcal{S}=\hat{\mu}$.
Then, invoking \cite[Proposition A.11 and Definition A.14]{LLZ23}, we may select a backward conditional transition probability kernel $\mathcal{L}$ of $\hat{\mu}\mathcal{S}^{[1]}$ from
$X$ to $X$ such that the kernel $\mathcal{L}$ is supported by $T^{-1}$ and satisfies the property that
$$(\hat{\mu}\mathcal{S}^{[1]})\circ\gamma_2^{-1}
=\hat{\mu}\mathcal{L}^{[1]}.$$
Combining this with \cite[Proposition 5.23]{LLZ23} implies that $\hat{\mu}$ is $\mathcal{L}$-invariant and
$$
h_{\hat{\mu}}(\mathcal{L}) = h_{\hat{\mu}}(\mathcal{S}).
$$
Moreover, by
\begin{align*}
1=\hat{\mu}(Z)=(\hat{\mu}\mathcal{L})(Z)
&=\int_X\mathcal{L}(x,Z)\,d\hat{\mu}(x)\\
&=\int_Z\mathcal{L}(x,Z)\,d\mu(x)
\end{align*}
we have $\mathcal{L}(x,Z)=1$ for $\mu$-almost
every $x\in Z$. Thus, for such $x\in Z$, we get
\[
\mathcal{L}_x(g(x))=\mathcal{L}_x(T^{-1}(x)\cap Z)=1.
\]
Consequently,
$$\mathcal{L}_x(A)=\delta_{g(x)}(A)=\mathcal{Q}_g(x,A)$$
holds for all $A\in \mathscr{B}(Z)$ and $\mu$-almost every $x\in Z$. Then applying \cite[Lemma 5.28]{LLZ23} we obtain
$$
h_{\hat{\mu}}(\mathcal{S})=h_{\hat{\mu}}(\mathcal{L}) = h_{\mu}(\mathcal{Q}_g)= h_{\mu}(g).
$$
Furthermore, by lemma \ref{lem:a integral equality}, it can be shown that
\begin{align*}
\int_X \int_{T(x_1)} \phi (x_1, x_2) \, d\mathcal{S}_{x_1}(x_2) \,
d\hat{\mu} (x_1)&=\int_{\mathcal{O}_2(T)}\phi\, d(\hat{\mu}\mathcal{S}^{[1]})\\
&=\int_{\mathcal{O}_2(T)}\phi\circ\gamma_2\, d(\hat{\mu}\mathcal{L}^{[1]})\\
&=\int_{\mathcal{O}_2(T)}\phi(x_2,x_1)\, d(\hat{\mu}\mathcal{L}^{[1]})(x_1,x_2)\\
&=\int_X \int_{T^{-1}(x_1)} \phi (x_2, x_1) \, d\mathcal{L}_{x_1}(x_2) \,
d\hat{\mu} (x_1)\\
&=\int_Z \int_{T^{-1}(x_1)} \phi (x_2, x_1) \, d\delta_{g(x_1)}(x_2) \,d\mu (x_1)\\
&=\int_Z \phi_g \,d\mu.
\end{align*}
Consequently,
\[
P_{\hat{\mu}}(T,\phi)=\sup_{\mathcal{S}\in \mathcal{K}_{\hat{\mu}}}\left\{h_{\hat{\mu}} (\mathcal{S})+ \int_X \int_{T(x_1)} \phi (x_1, x_2) \, d\mathcal{S}_{x_1}(x_2) \, d\hat{\mu} (x_1)\right\}= h_\mu(g)+\int_Z\phi_g\,d\mu.
\]

\emph{Step 2. Given $\mu\in\mathcal{P}_g^e(Z)$, prove that $\hat{\mu}\in\mathcal{P}_T^e(X)$.}

We use proof by contradiction to prove this assertion. Denote by $\nu=\hat{\mu}$ and assume that there exist
$p\in(0,1)$ and $\nu_1, \nu_2\in\mathcal{P}_T(X)$ such that
$\nu=p\nu_1+(1-p)\nu_2$. From $v(Z)=1$ it is not difficult to verify that $\nu_1(Z)=\nu_2(Z)=1$. Define $\nu_1|_Z, \nu_2|_Z\in\mathcal{P}(Z)$ as follows:
\[
\nu_1|_Z(A):=\frac{\nu_1(A)}{\nu_1(Z)}=\nu_1(A),~
\nu_2|_Z(A):=\frac{\nu_2(A)}{\nu_2(Z)}=\nu_2(A) \text{ for } A\in \mathscr{B}(Z).
\]
By the property of the correspondence $T$, it is obvious for any $A\in \mathscr{B}(Z)$ that
\begin{align*}
T^{-1}(A)\cap Z&=\{x\in X:~T(x)\cap A\neq\emptyset\}\cap Z\\
&=\{x\in Z:~T(x)\cap A\cap Z\neq\emptyset\}\\
&=\{x\in Z:~g^{-1}(x)\cap A\neq\emptyset\}\\
&=g(A).
\end{align*}
Since $\nu_1$ is $T$-invariant we obtain
\[
\nu_1(A)\leq\nu_1(T^{-1}(A))=\nu_1(T^{-1}(A)\cap Z)=\nu_1(g(A))
\text{ for any }A\in \mathscr{B}(Z),
\]
which means that
\[
\nu_1|_Z(g^{-1}(A))\leq\nu_1|_Z((A))
\]
holds for any $A\in \mathscr{B}(Z)$.

Furthermore, for $A\in \mathscr{B}(Z)$, let
\[
\omega(A):=\nu_1|_Z(A)-\nu_1|_Z(g^{-1}(A))\geq0.
\]
For disjoint Borel subsets $A$ and $B$ in $Z$, we can verify the following equation:
\begin{align*}
\omega(A\cup B)&=\nu_1|_Z(A\cup B)-\nu_1|_Z(g^{-1}(A\cup B))\\
&=\nu_1|_Z(A)+\nu_1|_Z (B)-\nu_1|_Z(g^{-1}(A))-\nu_1|_Z(g^{-1}(B))\\
&=\omega(A)+\omega(B).
\end{align*}
Therefore, for any $A\in \mathscr{B}(Z)$ we can get
\[
\omega(A)\leq\omega(Z)=\nu_1|_Z(Z)-\nu_1|_Z(g^{-1}(Z))=0,
\]
which yields that
\[
\nu_1|_Z(A)=\nu_1|_Z(g^{-1}(A)).
\]
So $\nu_1|_Z$ is $g$-invariant. Similarly, it can be shown that $\nu_2|_Z$ is also $g$-invariant.

Now, it follows from $\nu=p\nu_1+(1-p)\nu_2$ that
$\mu=\nu|_Z=p\nu_1|_Z+(1-p)\nu_2|_Z$. Since $\nu_1|_Z, \nu_2|_Z\in\mathcal{P}_g(Z)$ it means that $\mu$ is not an extreme
point of $\mathcal{P}_g(Z)$, a contradiction.
\end{proof}

Now we present the main result of this section.

\begin{thm}\label{thm:type I variation principle (over extreme points)}
Let $(X, d)$ be a compact metric space, $T$ be a correspondence on $X$ satisfying $T(X)=X$ and $\phi: \mathcal{O}_2(T) \rightarrow \mathbb{R}$ be a continuous function. Suppose that $T$ is generated by $(X_1, T_1)\to (X_2, T_2)\to\cdots \to(X_d, T_d), d\geq 1$,
where $T_i=\C_{f_i}$ or $\C_{f_i}^{-1}$, $f_i$ is a continuous self-map on $X_i$. Then
\[
\Ptop(T, \phi) = \sup_{\mu \in \mathcal{P}_T(X)} \left\{ P_\mu (T,\phi) \right\}=\sup_{\mu \in \mathcal{P}^e_T(X)} \left\{ P_\mu (T,\phi) \right\}.
\]
Especially,
\[
\htop(T) = \sup_{\mu \in \mathcal{P}_T(X)} \left\{ h_\mu (T) \right\}=\sup_{\mu \in \mathcal{P}^e_T(X)} \left\{ h_\mu (T) \right\}.
\]
\end{thm}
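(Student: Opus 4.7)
The plan is to reduce the statement to the classical variational principle for single-valued continuous maps by exploiting the decomposition structure. Before this reduction, observe the trivial inclusion
\[
\sup_{\mu \in \mathcal{P}^e_T(X)} P_\mu(T,\phi) \;\le\; \sup_{\mu \in \mathcal{P}_T(X)} P_\mu(T,\phi),
\]
and establish the easy direction $\sup_{\mu \in \mathcal{P}_T(X)} P_\mu(T,\phi) \le \Ptop(T,\phi)$ (which does not rely on forward expansiveness) by transferring each pair $(\mathcal{Q},\mu)$ to the associated $\sigma$-invariant measure $\mu\mathcal{Q}^\omega$ on $\mathcal{O}_\omega(T)$ via Lemma \ref{lem:invariant-measure-characterization-correspondence}, matching entropies by the standard identification and integrals by Lemma \ref{lem:a integral equality}, and then invoking the classical variational inequality for $(\mathcal{O}_\omega(T),\sigma)$ together with Lemma \ref{lem:connection of two pressure}. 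So the real content of the proof is the lower bound $\Ptop(T,\phi) \le \sup_{\mu \in \mathcal{P}^e_T(X)} P_\mu(T,\phi)$.

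For this lower bound, I would first apply the pressure formula for decompositions (Theorem \ref{thm:Entropy formula for subsystems}) to write $\Ptop(T,\phi)=\max_{1\le i\le d}\Ptop(T_i,\phi_{T_i})$, and fix an index $i^*$ attaining the maximum. Since $T_{i^*}$ equals either $\C_{f_{i^*}}$ or $\C_{f_{i^*}}^{-1}$, a direct inspection of $\epsilon$-separated subsets of $\mathcal{O}_{n+1}(T_{i^*})$ yields the identification
\[
\Ptop(T_{i^*},\phi_{T_{i^*}}) \;=\; \Ptop(f_{i^*}, \phi_{f_{i^*}}),
\]
where $\phi_{f_{i^*}}$ is the potential appearing in Lemma \ref{lem:key-extreme-relation1} when $T_{i^*}=\C_{f_{i^*}}$ and in Lemma \ref{lem:key-extreme-relation2} when $T_{i^*}=\C_{f_{i^*}}^{-1}$. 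In the first case the bijection $(x_1,\ldots,x_{n+1})\mapsto x_1$ turns orbits of $\C_{f_{i^*}}$ into points of $X_{i^*}$; in the second case one composes with the reversal $\gamma_{n+1}$ to pass from backward to forward $f_{i^*}$-orbits.

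Applying the classical variational principle \eqref{eq:classical variational principle} on the compact metric space $X_{i^*}$, one obtains a sequence $\mu_k \in \mathcal{P}^e_{f_{i^*}}(X_{i^*})$ with $h_{\mu_k}(f_{i^*}) + \int \phi_{f_{i^*}}\,d\mu_k \to \Ptop(f_{i^*},\phi_{f_{i^*}})$. Using Lemma \ref{lem:key-extreme-relation1} in the first case and Lemma \ref{lem:key-extreme-relation2} (which is where the hypothesis $T(X)=X$ is consumed) in the second, each $\mu_k$ lifts to $\hat\mu_k \in \mathcal{P}^e_T(X)$ with $P_{\hat\mu_k}(T,\phi) = h_{\mu_k}(f_{i^*}) + \int \phi_{f_{i^*}}\,d\mu_k$. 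Letting $k\to\infty$ gives
\[
\sup_{\nu \in \mathcal{P}^e_T(X)} P_\nu(T,\phi) \;\ge\; \Ptop(T_{i^*},\phi_{T_{i^*}}) \;=\; \Ptop(T,\phi),
\]
which closes the chain of inequalities and proves the theorem; the entropy version is the specialisation $\phi\equiv 0$.

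The step I expect to be most delicate is the reduction $\Ptop(T_{i^*},\phi_{T_{i^*}})=\Ptop(f_{i^*},\phi_{f_{i^*}})$ in the anti-holomorphic-type case $T_{i^*}=\C_{f_{i^*}}^{-1}$: here $\mathcal{O}_{n+1}(T_{i^*})$ is not canonically parametrised by $X_{i^*}$ (there are many preimage branches), so one must compose with $\gamma_{n+1}$ and check that the product metric is preserved while $S_n\phi$ gets replaced by the Birkhoff sum of the reversed potential $\phi_{f_{i^*}}(x)=\phi(f_{i^*}(x),x)$, as well as verify that $\epsilon$-separated subsets correspond under the reversal. Everything else is organisational: the pressure decomposition absorbs the multi-piece structure, and the two extreme-point lifting lemmas do the measure-theoretic work.
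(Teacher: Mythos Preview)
Your proposal is correct and follows essentially the same route as the paper: reduce via the pressure decomposition formula (Theorem \ref{thm:Entropy formula for subsystems}) to a single block $T_{i^*}$, identify $\Ptop(T_{i^*},\phi_{T_{i^*}})$ with the classical pressure $\Ptop(f_{i^*},\phi_{f_{i^*}})$, and then lift ergodic measures for $f_{i^*}$ to extreme $T$-invariant measures via Lemmas \ref{lem:key-extreme-relation1} and \ref{lem:key-extreme-relation2}. The only cosmetic difference is that where the paper cites \cite[Theorem D]{LLZ23} for the upper bound and \cite[Propositions 4.8, B.3]{LLZ23} for the pressure identification, you sketch those arguments directly (transfer to the shift on $\mathcal{O}_\omega(T)$, and the reversal $\gamma_{n+1}$ in the $\C_{f_{i^*}}^{-1}$ case).
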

\begin{proof}
By Theorem \ref{thm:Entropy formula for subsystems}, we obtain
\[
\Ptop(T,\phi)=\max_{1\leq i\leq d}\{\Ptop(T_i,\phi_{T_i})\}.
\]
Then there exists $1\leq i\leq d$ such that $\Ptop(T,\phi)=\Ptop(T_i,\phi_{T_i})$.

When $T_i=\C_{f_i}$. Define \(\phi_{f_i}\in C(X_i)\) by
\(\phi_{f_i}(x)=\phi(x,f_i(x))=\phi_{T_i}(x,f_i(x))\) for all \(x\in X_i\). Then \cite[Proposition B.3]{LLZ23}
implies $\Ptop(T_i,\phi_{T_i})=\Ptop(f_i,\phi_{f_i})$.
By the classical variational principle, we obtain
\[
\Ptop(f_i,\phi_{f_i})=\sup_{\mu\in\mathcal{P}_{f_i}(X_i)}
\left\{h_\mu(f_i)+\int_{X_i}\phi_{f_i}\,d\mu\right\}
=\sup_{\mu\in\mathcal{P}_{f_i}^e(X_i)}
\left\{h_\mu(f_i)+\int_{X_i}\phi_{f_i}\,d\mu\right\}.
\]
For each $\mu\in\mathcal{P}_{f_i}^e(X_i)$, let $\hat{\mu}$ denote the probability measure on $X$ defined by
\[
\hat{\mu}(A):=\mu(A\cap X_i) \text{ for any }A\in \mathscr{B}(X).
\]
According to Lemma \ref{lem:key-extreme-relation1}, we have
$\hat{\mu}\in\mathcal{P}_T^e(X)$ and
\[
P_{\hat{\mu}}(T,\phi)=h_\mu(f_i)+\int_{X_i}\phi_{f_i}\,d\mu.
\]
So
\[
\Ptop(T,\phi)\leq\sup_{\mu\in\mathcal{P}_{T}^e(X)}P_{\mu}(T,\phi),
\]
which together with \cite[Theorem D]{LLZ23} yields the conclusion.

When $T_i=\C_{f_i}^{-1}$. Define \(\phi_{f_i}\in C(X_i)\) by
\(\phi_{f_i}(x)=\phi(f_i(x),x)=\phi_{T_i}(f_i(x),x)\) for all \(x\in X_i\). Then \cite[Propositions 4.8 and B.3]{LLZ23}
implies $\Ptop(T_i,\phi_{T_i})=\Ptop(f_i,\phi_{f_i})$.
By applying Lemma \ref{lem:key-extreme-relation2} and adopting a method analogous to the previous case, we can complete the remaining proof.
\end{proof}

As applications of Theorem \ref{thm:type I variation principle (over extreme points)}, we present several interesting corollaries and examples. The following corollary follows from \cite[Proposition 4.8]{LLZ23}, Lemma \ref{lem:key-extreme-relation2}, and Theorem \ref{thm:type I variation principle (over extreme points)}.

\begin{cor}
Let $X$ be a compact metric space and $f$ be a continuous surjective map on $X$. The following conclusions hold.
\begin{enumerate}
  \item[(i)] $\htop(f)=\htop(f^{-1})$.
  \item[(ii)] A Borel probability measure $\mu$ on $X$ is $f$-invariant if and only if it is $f^{-1}$-invariant. Moreover, for such measures, we have $h_\mu(f)=h_{\mu}(f^{-1})$.
  \item[(iii)] $\htop(f^{-1})=\sup_{\mu\in \mathcal{P}_{f^{-1}}(X)}\{h_{\mu}(f^{-1})\}=\sup_{\mu\in \mathcal{P}^e_{f^{-1}}(X)}\{h_{\mu}(f^{-1})\}$.
\end{enumerate}
\end{cor}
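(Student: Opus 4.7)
The plan is to assemble the three ingredients highlighted in the preamble, specialized to the length-one decomposition $(X,\mathcal{C}_f^{-1})$ of the correspondence $\mathcal{C}_f^{-1}$; this decomposition makes sense because $f$ being surjective forces $\mathcal{C}_f(X)=X$, so $\mathcal{C}_f^{-1}$ is itself a correspondence on $X$. For (i), I would first identify $\htop(f)=\htop(\mathcal{C}_f)$ via \cite[Proposition B.3]{LLZ23}, then invoke the symmetry $\htop(\mathcal{C}_f)=\htop(\mathcal{C}_f^{-1})$ supplied by \cite[Proposition 4.8]{LLZ23}; concatenating these identifications gives $\htop(f)=\htop(f^{-1})$.

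Part (ii) splits into the invariance equivalence and the entropy identity. The implication ``$\mu\in\mathcal{P}_f(X)\Rightarrow\mu\in\mathcal{P}_{f^{-1}}(X)$'' is exactly Step 1 of Lemma \ref{lem:key-extreme-relation2}, applied with $Z=X$, $g=f$, and $T=\mathcal{C}_f^{-1}$, in which case $\hat{\mu}=\mu$. For the converse I would use Lemma \ref{lem:invariant-measure-characterization-correspondence}(iii): a $\mathcal{C}_f^{-1}$-invariant $\mu$ furnishes $\tilde{\mu}\in\mathcal{P}(X^2)$ supported on $\mathcal{O}_2(\mathcal{C}_f^{-1})=\{(f(y),y):y\in X\}$ with $\tilde{\mu}\circ\tilde{\pi}_1^{-1}=\tilde{\mu}\circ\tilde{\pi}_2^{-1}=\mu$. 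Because $y\mapsto(f(y),y)$ is a homeomorphism of $X$ onto $\mathcal{O}_2(\mathcal{C}_f^{-1})$, any such $\tilde{\mu}$ is the pushforward of a unique $\rho\in\mathcal{P}(X)$, and the two marginal conditions collapse to $\rho=\mu$ together with $\rho\circ f^{-1}=\mu$, i.e., the classical $f$-invariance of $\mu$. The identity $h_\mu(f)=h_\mu(f^{-1})$ then falls out of Lemma \ref{lem:key-extreme-relation2} with $\phi\equiv 0$, whose conclusion reads $P_\mu(\mathcal{C}_f^{-1},0)=h_\mu(f)$; unwinding the definition $h_\mu(f^{-1})=h_\mu(\mathcal{C}_f^{-1})=P_\mu(\mathcal{C}_f^{-1},0)$ closes the equality.

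For (iii), the correspondence $\mathcal{C}_f^{-1}$ is its own decomposition (i.e., $d=1$, $X_1=X$, $T_1=\mathcal{C}_{f_1}^{-1}$ with $f_1=f$), so Theorem \ref{thm:type I variation principle (over extreme points)} applies verbatim and delivers both variational identities in a single stroke. The main technical hurdle is the converse half of (ii), since none of the three highlighted tools produces ``$\mathcal{C}_f^{-1}$-invariant $\Rightarrow$ $f$-invariant'' out of the box; the decisive observation that makes it routine is the homeomorphism $y\mapsto(f(y),y)$, which reduces condition (iii) of Lemma \ref{lem:invariant-measure-characterization-correspondence} to the single scalar equation $\rho\circ f^{-1}=\rho$ on $\mathcal{P}(X)$.
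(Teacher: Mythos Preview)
Your proposal is correct and follows the paper's intended route: the paper simply records that the corollary ``follows from \cite[Proposition 4.8]{LLZ23}, Lemma \ref{lem:key-extreme-relation2}, and Theorem \ref{thm:type I variation principle (over extreme points)},'' and your argument is a faithful unpacking of exactly those three ingredients with $d=1$, $Z=X$, $g=f$, $T=\mathcal{C}_f^{-1}$.

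The one place you depart slightly is the converse implication in (ii). You are right that none of the three cited statements gives ``$\mathcal{C}_f^{-1}$-invariant $\Rightarrow$ $f$-invariant'' as a conclusion; your route via Lemma \ref{lem:invariant-measure-characterization-correspondence}(iii) and the homeomorphism $y\mapsto(f(y),y)$ onto $\mathcal{O}_2(\mathcal{C}_f^{-1})$ is clean and correct. The paper's intended shortcut is to reuse the computation inside \emph{Step 2} of Lemma \ref{lem:key-extreme-relation2}: there it is shown that any $T$-invariant $\nu_1$ concentrated on $Z$ satisfies $\nu_1|_Z\circ g^{-1}=\nu_1|_Z$; specializing to $Z=X$ and $g=f$ yields precisely the converse. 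Both arguments are short and equivalent in strength; yours has the advantage of not relying on an internal step of another proof.
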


\begin{rem}
The results presented above extend the classical entropy theory of homeomorphisms to the setting of non-invertible maps.
\end{rem}

From \cite{LLZ23, LMM23}, one can see that the Lee--Lyubich--Markorov--Mazor--Mukherjee anti-holomorphic correspondence satisfies the conditions of Theorem \ref{thm:type I variation principle (over extreme points)}. Therefore, the following result can be directly established.

\begin{cor}
Let $\mathfrak{C}^*$ be the Lee--Lyubich--Markorov--Mazor--Mukherjee anti-holomorphic correspondence on $\mathbb{\widehat{C}}$ and $\phi: \mathcal{O}_2(\mathfrak{C}^*) \rightarrow \mathbb{R}$ be a continuous function. Then
\[
\Ptop(\mathfrak{C}^*,\phi)=\sup_{\mu\in P_{\mathfrak{C}^*}(\mathbb{\widehat{C}})}\{P_\mu(\mathfrak{C}^*,\phi)\}
=\sup_{\mu\in P^e_{\mathfrak{C}^*}(\mathbb{\widehat{C}})}\{P_\mu(\mathfrak{C}^*,\phi)\}.
\]
\end{cor}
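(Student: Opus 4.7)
The plan is to verify that the Lee--Lyubich--Markorov--Mazor--Mukherjee anti-holomorphic correspondence $\mathfrak{C}^*$ on $\mathbb{\widehat{C}}$ satisfies the structural hypotheses of Theorem~\ref{thm:type I variation principle (over extreme points)}, and then to quote that theorem. The conclusion will then follow directly, with no further work needed beyond matching definitions.

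First, I would recall the construction of $\mathfrak{C}^*$ from \cite{LMM23} and its reformulation in \cite{LLZ23}. The correspondence is built by gluing together (anti-)holomorphic branches on a finite collection of closed subsets of $\mathbb{\widehat{C}}$: there is a distinguished filled-in piece on which $\mathfrak{C}^*$ acts (up to a branch choice) as the inverse graph of an anti-holomorphic degree~$d$ map, together with complementary closed pieces on which $\mathfrak{C}^*$ is given by the graph of a continuous self-map. After identifying these pieces $X_1,\ldots,X_d$ with the corresponding self-maps $f_1,\ldots,f_d$, one checks directly that $\mathbb{\widehat{C}}=\bigcup_{i=1}^d X_i$, each $X_i$ is closed, each $T_i:=\mathfrak{C}^*|_{X_i}$ equals either $\mathcal{C}_{f_i}$ or $\mathcal{C}_{f_i}^{-1}$, and the ``no backward leakage'' condition
\[
\mathfrak{C}^*(X_i)\cap\Big(\bigcup_{j=1}^{i-1}X_j\setminus X_i\Big)=\emptyset
\]
holds for an appropriate ordering of the pieces. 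This is essentially done already in \cite{LLZ23}; I would cite the relevant propositions there and in \cite{LMM23} rather than reproduce the verification.

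Second, I would verify surjectivity $\mathfrak{C}^*(\mathbb{\widehat{C}})=\mathbb{\widehat{C}}$. This again is immediate from the description in \cite{LLZ23, LMM23}: the constituent maps $f_i$ are surjective continuous self-maps (either anti-holomorphic branched coverings or their inverse graphs defined on the whole sphere, up to the piece decomposition), so the union of their graphs covers $\mathbb{\widehat{C}}\times\mathbb{\widehat{C}}$ in the second coordinate.

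Finally, with $\mathfrak{C}^*$ realized as a correspondence generated by $(X_1,T_1)\to(X_2,T_2)\to\cdots\to(X_d,T_d)$ with each $T_i\in\{\mathcal{C}_{f_i},\mathcal{C}_{f_i}^{-1}\}$ and $\mathfrak{C}^*(\mathbb{\widehat{C}})=\mathbb{\widehat{C}}$, Theorem~\ref{thm:type I variation principle (over extreme points)} applies verbatim with $T=\mathfrak{C}^*$ and $X=\mathbb{\widehat{C}}$, yielding
\[
\Ptop(\mathfrak{C}^*,\phi)=\sup_{\mu\in\mathcal{P}_{\mathfrak{C}^*}(\widehat{\mathbb{C}})}\{P_\mu(\mathfrak{C}^*,\phi)\}
=\sup_{\mu\in\mathcal{P}^e_{\mathfrak{C}^*}(\widehat{\mathbb{C}})}\{P_\mu(\mathfrak{C}^*,\phi)\}.
\]
The only nontrivial point is the structural identification in the first step; once $\mathfrak{C}^*$ is recognized as a decomposed correspondence whose pieces are graphs or inverse graphs of continuous self-maps, everything else is an invocation of machinery already in place. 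I expect the main obstacle, if any, to be cleanly pinning down the ordering of the pieces $(X_i)$ so that the asymmetric condition $\mathfrak{C}^*(X_i)\cap(\bigcup_{j<i}X_j\setminus X_i)=\emptyset$ holds; this should be guided by the dynamical stratification of $\mathbb{\widehat{C}}$ given in \cite{LMM23}, in particular separating the regular ``filled'' part from the tiling piece where $\mathfrak{C}^*$ acts as an honest (anti-)holomorphic map.
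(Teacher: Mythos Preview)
Your proposal is correct and follows exactly the paper's approach: the paper simply states (just before the corollary) that by \cite{LLZ23, LMM23} the Lee--Lyubich--Markorov--Mazor--Mukherjee anti-holomorphic correspondence satisfies the hypotheses of Theorem~\ref{thm:type I variation principle (over extreme points)}, and the corollary follows immediately. Your write-up is in fact more detailed than the paper's one-line justification, but the strategy is identical.
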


The following is a simple example that satisfies the condition of Theorem \ref{thm:type I variation principle (over extreme points)}.

\begin{exmp}
Let $X=[0,1]$ and $f,g:X\to X$ be defined as
\begin{equation*}
  f(x) =
  \begin{cases}
    x, & 0\leq x \leq \frac{1}{2} \\
    \frac{1}{2}x+\frac{1}{4}, & \frac{1}{2}\leq x \leq 1,
  \end{cases}
\end{equation*}
and
\begin{equation*}
  g(x) =
  \begin{cases}
    -2x+1, & 0\leq x \leq \frac{1}{4} \\
    2x, & \frac{1}{4}\leq x \leq \frac{1}{2} \\
    -\frac{1}{2}x+\frac{5}{4}, & \frac{1}{2}\leq x \leq 1.
  \end{cases}
\end{equation*}
\end{exmp}

\begin{figure*}[h]
\centering 
\begin{tikzpicture}
\begin{axis}[
    axis equal image, 
    axis lines = middle,
    xlabel = $x$,
    ylabel = $y$,
    xmin = 0, xmax = 1,
    ymin = 0, ymax = 1,
    grid = both,
    title = {$f(x)$ and $g(x)$},
]

\addplot[blue, thick, domain=0:0.5] {x};
\addplot[blue, thick, domain=0.5:1] {0.5*x + 0.25};

\addplot[red, thick, domain=0:0.25] {-2*x + 1};
\addplot[red, thick, domain=0.25:0.5] {2*x};
\addplot[red, thick, domain=0.5:1] {-0.5*x + 1.25};

\addplot[black, dashed, thick] coordinates {(0.5,0) (0.5,1)}; 
\addplot[black, dashed, thick] coordinates {(0,0.5) (1,0.5)}; 

\node[anchor=south east] at (axis cs:0.5,1.4) {$x=\frac{1}{2}$};
\node[anchor=west] at (axis cs:1.1,0.5) {$y=\frac{1}{2}$};

\node[blue] at (axis cs:0.7,0.6) {$f(x)$};
\node[red] at (axis cs:0.75,0.875) {$g(x)$};
\end{axis}
\end{tikzpicture}
\end{figure*}

Let $T(x)=\{f(x),g(x)\}$ for $x\in X$. Then $T$ is a correspondence on $X$. We consider
\[
X_1:=\left[0,\frac{1}{2}\right],\text{ and }X_2:=\left[\frac{1}{2},1\right],
\]
and define the maps
$h_1:X_1\to X_1$ and $h_2:X_2\to X_2$ given by
$h_1(x)=\{x\}$, and
\begin{equation*}
  h_2(x) =
  \begin{cases}
    2x-\frac{1}{2}, & \frac{1}{2}\leq x \leq \frac{3}{4}, \\
    -2x+\frac{5}{2}, & \frac{3}{4}\leq x \leq 1. \\
  \end{cases}
\end{equation*}
Then
\[
h_2^{-1}(x)=\left\{\frac{1}{2}x+\frac{1}{4},-\frac{1}{2}x+\frac{5}{4}\right\} \text{ for }
x\in X_2.
\]
It is easy to verify that $T$ is a correspondence on $X$ generated by $(X_1, \mathcal{C}_{h_1})\to (X_2, \mathcal{C}_{h_2}^{-1})$. By Theorem \ref{thm:type I variation principle (over extreme points)}, we have
\[
\htop(T) = \sup_{\mu \in \mathcal{P}_T(X)} \left\{ h_\mu (T) \right\}=\sup_{\mu \in \mathcal{P}^e_T(X)} \left\{ h_\mu (T) \right\}
=\log 2.
\]

\section{Variational principle (II)}

The concept of (topological) invariance entropy in control systems originated from the seminal work of Nair et al. \cite{NEMM04} and was further developed by Colonius and Kawan \cite{CK08,Ka13}. As a natural generalization of invariance entropy, invariance pressure was introduced and has been extensively studied (see \cite{CCS18,CCS20,CCS22,CSC19,ZH19}).
Subsequently, various notions of measure-theoretic invariance entropy and a series of corresponding variational principles were established in the literature (see, for example, \cite{Co18-1,Co18-2,WHS19,WH22}). In 2022, building on a fundamental fact that topological pressure determines measure-theoretic entropy, Nie, Wang, and Huang \cite{NWH22} established a variational principle linking invariance pressure and measure-theoretic invariance entropy in control systems, utilizing tools from functional analysis.

Independently, Bi\'{s}, Carvalho, Mendes, and Varandas \cite{BCMV22} employed a similar approach to establish an abstract variational principle via convex analysis techniques.
This principle applies to real-valued functions defined on an appropriate Banach space of potentials, satisfying convexity, monotonicity, and translation invariance. Crucially, the framework admits applications to both the classical topological pressure for continuous maps and the topological pressure for semigroup actions. More precisely, they introduced the following abstract measure-theoretic entropy via the pressure function.

\begin{defn}
Let $f$ be a continuous self-map on a compact metric space $(X, d)$ with $\htop(f)<+\infty$ and $\mu$ be a Borel probability measure on $X$. The abstract measure-theoretic entropy of $f$ for $\mu$ is defined as
\[
\mathfrak{h}_\mu (f):=\inf_{\phi\in\mathcal{C}_f} \left\{\int_X \phi \, d\mu \right\},
\]
where
\[
\mathcal{C}_f:=\{\phi\in C(X):\,P(f,-\phi)\leq 0\}.
\]
\end{defn}

Based on this definition, Bi\'{s} et al.\cite{BCMV22} proved the following result.

\begin{thm}\cite[Theorem 5]{BCMV22}\label{thm:abstract-variational-principle-single}
Let $f$ be a continuous self-map on a compact metric space $(X, d)$ with $\htop(f)<+\infty$ and $\mu$ be a Borel probability measure on $X$. The abstract measure-theoretic entropy $\mathfrak{h}_\mu (f)$ satisfies:
\begin{enumerate}
  \item [(i)] $0\leq h_\mu(f)\leq \mathfrak{h}_\mu (f)$ for any $\mu\in \mathcal{P}_f(X)$.
  \item [(ii)] For every continuous potential $\varphi: X\to \R$,
  \begin{equation}\label{eq:abstract-variational-principle-single}
  \Ptop(f, \varphi)
  = \max_{\mu\in \mathcal{P}(X)} \left\{ \mathfrak{h}_\mu (f) + \int_X \varphi \, d\mu \right\}
  =\max_{\mu\in \mathcal{P}_f(X)} \left\{ \mathfrak{h}_\mu (f) + \int_X \varphi \, d\mu \right\}.
  \end{equation}
  \item[(iii)] Every measure $\mu\in \mathcal{P}(X)$ which attains the maximum \eqref{eq:abstract-variational-principle-single} is $f$-invariant.
\end{enumerate}
\end{thm}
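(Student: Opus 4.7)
The plan is to treat $F(\varphi) := \Ptop(f, \varphi)$ as a convex, $1$-Lipschitz, monotone, translation-invariant functional on the Banach space $C(X)$, and to recognize $\mathfrak{h}_\mu(f)$ essentially as minus its Fenchel conjugate restricted to probability measures. The three parts then reduce to the classical variational principle together with subdifferential calculus on $C(X)$. For part (i), fix $\mu \in \mathcal{P}_f(X)$ and $\phi \in \mathcal{C}_f$; applying the classical variational principle \eqref{eq:classical variational principle} to the potential $-\phi$ gives $h_\mu(f) - \int \phi \, d\mu \le \Ptop(f, -\phi) \le 0$, whence $h_\mu(f) \le \int \phi \, d\mu$. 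Taking the infimum over $\mathcal{C}_f$ yields $h_\mu(f) \le \mathfrak{h}_\mu(f)$, and nonnegativity is automatic since $h_\mu(f) \ge 0$. The easy half of (ii) is analogous: given any $\varphi \in C(X)$ and $\mu \in \mathcal{P}(X)$, set $c := \Ptop(f, \varphi)$; Lemma \ref{lem:basic properties of topological pressure for correspondences}(ii) gives $\Ptop(f, -(c - \varphi)) = \Ptop(f, \varphi) - c = 0$, so $c - \varphi \in \mathcal{C}_f$ and hence $\mathfrak{h}_\mu(f) \le \int (c - \varphi) \, d\mu = \Ptop(f, \varphi) - \int \varphi \, d\mu$, uniformly in $\mu$.

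The main work is producing a maximizing measure and thereby establishing the reverse inequality. Since $F = \Ptop(f, \cdot)$ is convex (Lemma \ref{lem:basic properties of topological pressure for correspondences}(iii)) and $1$-Lipschitz in the supremum norm (a direct consequence of (i) of the same lemma), it is continuous, so its subdifferential $\partial F(\varphi) \subset C(X)^*$ is nonempty at every $\varphi$ by standard convex analysis in Banach spaces. I would pick any $\mu \in \partial F(\varphi)$: translation invariance $F(\varphi + c) = F(\varphi) + c$ forces $\mu(X) = 1$, while monotonicity of $F$ forces $\mu$ to be a positive measure (testing against $\psi \le 0$ gives $\int \psi \, d\mu \le F(\varphi + \psi) - F(\varphi) \le 0$), so $\mu \in \mathcal{P}(X)$. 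For any $\phi \in \mathcal{C}_f$, the subgradient inequality applied with test function $\psi := -\varphi - \phi$ reads
\[
0 \ge \Ptop(f, -\phi) = F(\varphi + \psi) \ge F(\varphi) + \int \psi \, d\mu = \Ptop(f, \varphi) - \int \varphi \, d\mu - \int \phi \, d\mu.
\]
Rearranging and taking the infimum over $\mathcal{C}_f$ delivers $\mathfrak{h}_\mu(f) + \int \varphi \, d\mu \ge \Ptop(f, \varphi)$, which gives the missing inequality and simultaneously exhibits a maximizer for the first supremum in (ii).

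Finally, for (iii) and the identification of the two suprema in (ii), suppose $\mu \in \mathcal{P}(X)$ attains the maximum. Apply the already-established upper bound to the cohomologous potential $\varphi + \psi - \psi \circ f$: by Lemma \ref{lem:basic properties of topological pressure for correspondences}(iv) the pressure is unchanged, so
\[
\mathfrak{h}_\mu(f) + \int \varphi \, d\mu + \int (\psi - \psi \circ f) \, d\mu \le \Ptop(f, \varphi) = \mathfrak{h}_\mu(f) + \int \varphi \, d\mu,
\]
which forces $\int (\psi - \psi \circ f) \, d\mu \le 0$ for every $\psi \in C(X)$. Replacing $\psi$ by $-\psi$ upgrades this to equality and hence $f$-invariance of $\mu$; this simultaneously shows the two maxima in (ii) coincide. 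The one genuine obstacle in the whole argument is the nonemptiness of $\partial F(\varphi)$: in infinite dimensions this relies on continuity of $F$ at $\varphi$, which is precisely why pinning down the $1$-Lipschitz estimate for $\Ptop$ at the outset is essential.
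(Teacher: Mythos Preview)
Your argument is correct. Note, however, that the paper does not supply its own proof of this theorem: it is simply quoted from \cite[Theorem~5]{BCMV22} and used as a black box. Your proof is essentially the convex-analysis argument from that reference, recognizing $\Ptop(f,\cdot)$ as a convex, monotone, translation-invariant, $1$-Lipschitz functional on $C(X)$ and extracting a maximizing measure from its (nonempty) subdifferential; the invariance in (iii) via the coboundary trick $\psi-\psi\circ f$ is also standard. One small cosmetic point: you cite Lemma~\ref{lem:basic properties of topological pressure for correspondences}, which in the paper is stated for correspondences; in the single-valued setting these properties are the classical ones from \cite[Theorem~9.7]{Wa82}, so the reference should really be to Walters directly.
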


Building upon the abstract measure-theoretic entropy framework for continuous maps, we introduce the following abstract measure-theoretic entropy for transition probability kernels.
\begin{defn}
Let $T$ be a correspondence on a compact metric space $(X, d)$, $\mathcal{Q}$ be a transition probability kernel on $X$ supported by $T$ and $\mu$ be a  Borel probability measure on $X$, define
\[
\mathfrak{h}_\mu (\mathcal{Q}):=\inf_{\phi\in\mathcal{C}_T} \left\{\int_X \int_{T(x_1)} \phi (x_1, x_2) \, d\mathcal{Q}_{x_1}(x_2)\, d\mu (x_1) \right\},
\]
where
\[
\mathcal{C}_T:=\{\phi\in C(\mathcal{O}_2(T)):\, \Ptop(T,-\phi)\leq 0\}.
\]
We call $\mathfrak{h}_\mu (\mathcal{Q})$ the {\it abstract measure-theoretic entropy of $\mathcal{Q}$ for $\mu$}.
\end{defn}

\begin{prop}
Let
\[
\mathcal{C}'_T=\{\phi\in C(\mathcal{O}_2(T)):\, \Ptop(T,-\phi)= 0\}.
\]
The abstract measure-theoretic entropy of $\mathcal{Q}$ for $\mu$
can also be defined as
\[
\mathfrak{h}_\mu (\mathcal{Q})=\inf_{\phi\in\mathcal{C}'_T} \left\{\int_X \int_{T(x_1)} \phi (x_1, x_2) \, d\mathcal{Q}_{x_1}(x_2)\, d\mu (x_1) \right\}.
\]
\end{prop}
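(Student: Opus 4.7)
The plan is to establish the equality of the two infima, one over $\mathcal{C}_T$ and the other over $\mathcal{C}'_T$. Since $\mathcal{C}'_T \subset \mathcal{C}_T$, the infimum over $\mathcal{C}_T$ is automatically bounded above by the infimum over $\mathcal{C}'_T$. The nontrivial direction is the reverse inequality: for every $\phi \in \mathcal{C}_T$ I need to produce some $\psi \in \mathcal{C}'_T$ whose integral against $\mu\mathcal{Q}^{[1]}$ does not exceed that of $\phi$.

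The key tool is the translation invariance of topological pressure, namely $\Ptop(T,\phi+c) = \Ptop(T,\phi) + c$ for $c \in \R$, as recorded in Lemma \ref{lem:basic properties of topological pressure for correspondences}(ii). Given $\phi \in \mathcal{C}_T$, I would set $c := \Ptop(T,-\phi)$. Since $\phi$ is continuous on the compact space $\mathcal{O}_2(T)$, $\phi$ is bounded, and by the remark following the definition of topological pressure one has $c \in (-\infty, 0]$. Then $\psi := \phi + c \in C(\mathcal{O}_2(T))$ satisfies
\[
\Ptop(T,-\psi) = \Ptop(T,-\phi - c) = \Ptop(T,-\phi) - c = 0,
\]
so $\psi \in \mathcal{C}'_T$.

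To compare integrals, I would use that $\mathcal{Q}_{x_1}$ and $\mu$ are probability measures, so integrating the constant $c$ contributes exactly $c$:
\[
\int_X\!\int_{T(x_1)}\! \psi(x_1,x_2) \, d\mathcal{Q}_{x_1}(x_2) \, d\mu(x_1)
= \int_X\!\int_{T(x_1)}\! \phi(x_1,x_2) \, d\mathcal{Q}_{x_1}(x_2) \, d\mu(x_1) + c,
\]
which is at most the original integral because $c \leq 0$. Taking the infimum over $\phi \in \mathcal{C}_T$ on the right then yields $\inf_{\mathcal{C}'_T} \leq \inf_{\mathcal{C}_T}$, completing the proof.

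I do not foresee any real obstacle: the argument is a direct application of translation invariance of the pressure together with the sign information $\Ptop(T,-\phi) \leq 0$ encoded in the definition of $\mathcal{C}_T$. The only point worth double-checking is that $c$ is genuinely a real number (not $-\infty$), which follows from the general fact $-\infty < \Ptop(T,\varphi) \leq +\infty$ stated in the preliminaries.
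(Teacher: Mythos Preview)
Your proposal is correct and follows essentially the same argument as the paper: both note that $\mathcal{C}'_T\subset\mathcal{C}_T$ gives one inequality, and for the other both take $\phi\in\mathcal{C}_T$, shift it by the constant $\Ptop(T,-\phi)\le 0$ to land in $\mathcal{C}'_T$ via translation invariance of the pressure, and observe that the integral only decreases. Your added remark that $\Ptop(T,-\phi)>-\infty$ (so the shift is well-defined) is a helpful clarification that the paper leaves implicit.
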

\begin{proof}
Due to the definitions of $\mathcal{C}'_T$ and $\mathcal{C}_T$, it is suffices to prove that
\[
\mathfrak{h}_\mu (\mathcal{Q})\geq\inf_{\phi\in\mathcal{C}'_T} \left\{\int_X \int_{T(x_1)} \phi (x_1, x_2) \, d\mathcal{Q}_{x_1}(x_2)\, d\mu (x_1) \right\}.
\]
Given $\phi\in C(\mathcal{O}_2(T))$ with $\Ptop(T,-\phi)\leq 0$. By Lemma \ref{lem:basic properties of topological pressure for correspondences},
one has $$\Ptop(T,-\phi-\Ptop(T,-\phi))=0.$$
This indicates that
$\hat{\phi}:=\phi+\Ptop(T,-\phi)$ belongs to $\mathcal{C}'_T$. Hence,
\begin{align*}
\mathfrak{h}'_\mu (\mathcal{Q})&:=\inf_{\phi\in\mathcal{C}'_T} \left\{\int_X \int_{T(x_1)} \phi (x_1, x_2) \, d\mathcal{Q}_{x_1}(x_2)\, d\mu (x_1) \right\}\\
&\leq\int_X \int_{T(x_1)} \hat{\phi} (x_1, x_2) \, d\mathcal{Q}_{x_1}(x_2)\, d\mu (x_1)\\
&=\int_X \int_{T(x_1)} \phi (x_1, x_2) \, d\mathcal{Q}_{x_1}(x_2)\, d\mu (x_1)+\Ptop(T,-\phi)\\
&\leq\int_X \int_{T(x_1)} \phi (x_1, x_2) \, d\mathcal{Q}_{x_1}(x_2)\, d\mu (x_1),
\end{align*}
which immediately yields the desired inequality, as the function $\phi$ was chosen arbitrarily.
\end{proof}

\begin{rem}
The previous proposition demonstrates that for a transition probability kernel on a compact metric space supported by a correspondence $T$, its abstract measure-theoretic entropy can be completely characterized by potentials exhibiting vanishing topological pressure. Indeed, analogous results hold true for single-valued continuous maps as well (see also \cite{NWH22}).
\end{rem}

\begin{lem}\label{lem:auxiliary tool}
Let $T$ be a correspondence on a compact metric space $(X, d)$
and $\nu$ be a Borel probability measure on $\mathcal{O}_\omega(T)$.
If a transition probability kernel $\mathcal{Q}$ on $X$ supported by $T$
and a Borel probability measure $\mu$ on $X$ satisfy $\nu\circ\tilde{\pi}^{-1}_{12}=\mu\mathcal{Q}^{[1]}$, then $\mathfrak{h}_\nu (\sigma)\leq \mathfrak{h}_\mu (\mathcal{Q})$.
\end{lem}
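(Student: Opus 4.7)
The plan is to exploit the embedding $\phi \mapsto \tilde{\phi}$ together with Lemma \ref{lem:connection of two pressure} to translate the variational infimum for $\mathfrak{h}_\mu(\mathcal{Q})$ (over $\mathcal{C}_T$) into a comparison with the infimum defining $\mathfrak{h}_\nu(\sigma)$ (over $\mathcal{C}_\sigma$), after which Lemma \ref{lem:a integral equality} and the joint-distribution hypothesis $\nu\circ\tilde{\pi}^{-1}_{12}=\mu\mathcal{Q}^{[1]}$ match the two integrands.

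First, I would fix an arbitrary $\phi\in\mathcal{C}_T$, so that $\Ptop(T,-\phi)\leq 0$. Since $\widetilde{-\phi}=-\tilde{\phi}$ as continuous functions on $\mathcal{O}_\omega(T)$, Lemma \ref{lem:connection of two pressure} gives
\[
\Ptop(\sigma,-\tilde{\phi})=\Ptop(T,-\phi)\leq 0,
\]
and hence $\tilde{\phi}\in\mathcal{C}_\sigma$. In particular, by the definition of the abstract entropy of the shift system $(\mathcal{O}_\omega(T),\sigma)$,
\[
\mathfrak{h}_\nu(\sigma)\leq \int_{\mathcal{O}_\omega(T)}\tilde{\phi}\,d\nu.
\]

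Next I would identify the right-hand integral with the defining integrand of $\mathfrak{h}_\mu(\mathcal{Q})$. By Lemma \ref{lem:a integral equality},
\[
\int_X\int_{T(x_1)}\phi(x_1,x_2)\,d\mathcal{Q}_{x_1}(x_2)\,d\mu(x_1)
=\int_{\mathcal{O}_2(T)}\phi\,d(\mu\mathcal{Q}^{[1]}).
\]
Using the hypothesis $\mu\mathcal{Q}^{[1]}=\nu\circ\tilde{\pi}^{-1}_{12}$ and the fact that $\tilde{\phi}=\phi\circ\tilde{\pi}_{12}$ by the definition \eqref{eq:tilde of phi}, the change of variables formula yields
\[
\int_{\mathcal{O}_2(T)}\phi\,d(\mu\mathcal{Q}^{[1]})
=\int_{\mathcal{O}_\omega(T)}\phi\circ\tilde{\pi}_{12}\,d\nu
=\int_{\mathcal{O}_\omega(T)}\tilde{\phi}\,d\nu.
\]
Combining the two displays gives
\[
\mathfrak{h}_\nu(\sigma)\leq \int_X\int_{T(x_1)}\phi(x_1,x_2)\,d\mathcal{Q}_{x_1}(x_2)\,d\mu(x_1),
\]
and since $\phi\in\mathcal{C}_T$ was arbitrary, taking the infimum on the right yields $\mathfrak{h}_\nu(\sigma)\leq\mathfrak{h}_\mu(\mathcal{Q})$.

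There is no real obstacle here: the argument is a straightforward combination of the pressure identification in Lemma \ref{lem:connection of two pressure}, the integral identity in Lemma \ref{lem:a integral equality}, and the definition \eqref{eq:tilde of phi} of $\tilde{\phi}$. The only point to be cautious about is the direction of the inequality — one must verify that the relevant test-function class for the shift ($\mathcal{C}_\sigma$) contains $\tilde{\phi}$ whenever $\phi\in\mathcal{C}_T$, so that $\mathfrak{h}_\nu(\sigma)$ is bounded above by the integrals that appear in the definition of $\mathfrak{h}_\mu(\mathcal{Q})$.
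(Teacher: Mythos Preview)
Your proof is correct and follows essentially the same approach as the paper: both fix $\phi\in\mathcal{C}_T$, use Lemma~\ref{lem:connection of two pressure} to show $\tilde{\phi}\in\mathcal{C}_\sigma$, and then invoke Lemma~\ref{lem:a integral equality} together with the hypothesis $\nu\circ\tilde{\pi}^{-1}_{12}=\mu\mathcal{Q}^{[1]}$ to identify the two integrals. Your presentation is slightly more explicit about the change-of-variables step $\tilde{\phi}=\phi\circ\tilde{\pi}_{12}$, but the argument is otherwise identical.
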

\begin{proof}
According to the definitions of the abstract measure-theoretic entropies,
it suffices to prove that for any $\phi\in \mathcal{C}_T$, one can
find a function $\varphi\in \mathcal{C}_\sigma$ such that
\[
\int_{\mathcal{O}_\omega(T)}\varphi\, d\nu\leq\int_X \int_{T(x_1)} \phi (x_1, x_2) \, d\mathcal{Q}_{x_1}(x_2)\, d\mu (x_1).
\]
Given $\phi\in \mathcal{C}_T$, it follows from Lemma \ref{lem:connection of two pressure}
that
\[
\Ptop(\sigma,-\tilde{\phi})=\Ptop(T,-\phi)\leq0.
\]
Consequently, $\tilde{\phi}\in \mathcal{C}_\sigma$.
Moreover, by Lemma \ref{lem:a integral equality},
\begin{align*}
\int_{\mathcal{O}_\omega(T)}\tilde{\phi}\, d\nu
&=\int_{\mathcal{O}_2(T)}\phi\, d\nu\circ\tilde{\pi}^{-1}_{12}\\
&=\int_{\mathcal{O}_2(T)}\phi\, d(\mu\mathcal{Q}^{[1]})\\
&=\int_X \int_{T(x_1)} \phi (x_1, x_2) \, d\mathcal{Q}_{x_1}(x_2)\, d\mu (x_1).
\end{align*}
This ends the proof.
\end{proof}

We now establish the main result of this section: an abstract variational principle for the topological pressure of correspondences. A key innovation lies in deriving this principle without imposing additional conditions on the correspondences.

\begin{thm}\label{thm:type II variational principle}
Let $T$ be a correspondence on a compact metric space $(X, d)$.

\begin{enumerate}
    \item[(i)] For any continuous function $\phi: \mathcal{O}_2(T) \rightarrow \mathbb{R}$, the variational principle holds:
    \begin{equation}\label{eq:abstract-variational-principle-correspondence}
    \Ptop(T, \phi) = \max_{\mathcal{Q}, \mu} \left\{ \mathfrak{h}_\mu (\mathcal{Q}) + \int_X \int_{T(x_1)} \phi (x_1, x_2) \, d\mathcal{Q}_{x_1}(x_2) \, d\mu (x_1) \right\},
    \end{equation}
    where $\mathcal{Q}$ ranges over all transition probability kernels on $X$ supported by $T$, and $\mu$ ranges over all Borel probability measures or $\mathcal{Q}$-invariant Borel probability measures on $X$.

    \item[(ii)] Every pair $(\mathcal{Q}, \mu)$ which attains the maximal \eqref{eq:abstract-variational-principle-correspondence} satisfies $\mu\mathcal{Q}=\mu$. That is, $\mu$ is $\mathcal{Q}$-invariant.

    \item[(iii)] For any transition probability kernel $\mathcal{Q}$ on $X$ supported by $T$ and any Borel probability measure $\mu$ on $X$, the inverse variational principle holds:
    \[
    \mathfrak{h}_\mu (\mathcal{Q}) = \inf_{\phi} \left\{ \Ptop(T,\phi) - \int_X \int_{T(x_1)} \phi (x_1, x_2) \, d\mathcal{Q}_{x_1}(x_2) \, d\mu (x_1) \right\},
    \]
    where $\phi$ ranges over all continuous functions on $\mathcal{O}_2(T)$.

    \item[(iv)] $0\leq h_\mu (\mathcal{Q})\leq \mathfrak{h}_\mu (\mathcal{Q})$ for any transition probability kernel $\mathcal{Q}$ on $X$ supported by $T$ and any $\mu\in \mathcal{P}_\mathcal{Q}(X)$.
\end{enumerate}
\end{thm}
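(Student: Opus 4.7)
The plan is to lift the problem to the shift system $(\mathcal{O}_\omega(T), \sigma)$ via Lemma \ref{lem:connection of two pressure} and invoke the Bi\'{s}--Carvalho--Mendes--Varandas abstract variational principle (Theorem \ref{thm:abstract-variational-principle-single}) there, then transfer the maximizing $\sigma$-invariant measure back to a pair $(\mathcal{Q},\mu)$ on $X$ using Lemma \ref{lem:invariant-measure-characterization-correspondence} and compare abstract entropies through Lemma \ref{lem:auxiliary tool}.

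I first establish the easy inequality together with (iii). For any kernel $\mathcal{Q}$ supported by $T$, any $\mu \in \mathcal{P}(X)$, and any $\phi \in C(\mathcal{O}_2(T))$, the function $\psi := -\phi + \Ptop(T,\phi)$ lies in $\mathcal{C}_T$ by Lemma \ref{lem:basic properties of topological pressure for correspondences}(ii). Feeding this $\psi$ into the defining infimum of $\mathfrak{h}_\mu(\mathcal{Q})$ and using Lemma \ref{lem:a integral equality} yields
\[ \mathfrak{h}_\mu(\mathcal{Q}) + \int_X \int_{T(x_1)} \phi(x_1,x_2)\, d\mathcal{Q}_{x_1}(x_2)\, d\mu(x_1) \leq \Ptop(T,\phi), \]
which already gives the $\leq$ half of (i). Part (iii) is the Legendre dual of this inequality: for each $\psi \in \mathcal{C}_T$, applying the bound above to $\phi = -\psi$ shows $\inf_\phi\{\Ptop(T,\phi) - \int \phi \, d\mu\mathcal{Q}^{[1]}\} \leq \int \psi \, d\mu\mathcal{Q}^{[1]}$, and taking the infimum over $\psi \in \mathcal{C}_T$ delivers the matching inequality.

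To realize the supremum in (i), I apply Theorem \ref{thm:abstract-variational-principle-single} to $(\mathcal{O}_\omega(T), \sigma)$ with potential $\tilde{\phi}$, obtaining a $\sigma$-invariant $\nu \in \mathcal{P}_\sigma(\mathcal{O}_\omega(T))$ with $\Ptop(\sigma, \tilde{\phi}) = \mathfrak{h}_\nu(\sigma) + \int \tilde{\phi} \, d\nu$. Setting $\mu := \nu \circ \tilde{\pi}_1^{-1}$ and $\tilde{\mu} := \nu \circ \tilde{\pi}_{12}^{-1}$ and running the chain (iv)$\Rightarrow$(iii)$\Rightarrow$(ii) from the proof of Lemma \ref{lem:invariant-measure-characterization-correspondence} produces a kernel $\mathcal{Q}$ supported by $T$ such that $\mu\mathcal{Q} = \mu$ and $\mu\mathcal{Q}^{[1]} = \tilde{\mu}$. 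Lemma \ref{lem:auxiliary tool} then gives $\mathfrak{h}_\nu(\sigma) \leq \mathfrak{h}_\mu(\mathcal{Q})$ and Lemma \ref{lem:a integral equality} gives $\int \tilde{\phi} \, d\nu = \int_X \int_{T(x_1)} \phi \, d\mathcal{Q}_{x_1}\, d\mu$; sandwiching with the upper bound from the previous paragraph forces equality. For (ii), given any maximizing pair $(\mathcal{Q},\mu)$, Lemma \ref{lem:basic properties of topological pressure for correspondences}(iv) lets me perturb $\phi$ by any coboundary $\psi\circ\tilde{\pi}_1 - \psi\circ\tilde{\pi}_2$ without changing $\Ptop(T,\phi)$, and the identities $\mu\mathcal{Q}^{[1]}\circ\tilde{\pi}_1^{-1} = \mu$, $\mu\mathcal{Q}^{[1]}\circ\tilde{\pi}_2^{-1} = \mu\mathcal{Q}$ convert the maximizing condition into $\int \psi\, d(\mu - \mu\mathcal{Q}) \leq 0$ for every $\psi \in C(X)$; replacing $\psi$ by $-\psi$ forces $\mu = \mu\mathcal{Q}$.

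Part (iv) is now short: $h_\mu(\mathcal{Q}) \geq 0$ is immediate from nonnegativity of the partition entropies $H_{\mu\mathcal{Q}^{[n-1]}}(\mathcal{A}^n)$, and the inequality $h_\mu(\mathcal{Q}) \leq \mathfrak{h}_\mu(\mathcal{Q})$ for $\mu \in \mathcal{P}_\mathcal{Q}(X)$ follows by combining (iii) with the easy half $h_\mu(\mathcal{Q}) + \int \phi \, d\mu\mathcal{Q}^{[1]} \leq \Ptop(T,\phi)$ of the variational principle in \cite{LLZ23}, which holds for $\mathcal{Q}$-invariant $\mu$ without any expansiveness hypothesis. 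The main obstacle is the middle paragraph: converting the $\sigma$-invariant maximizer $\nu$ on $\mathcal{O}_\omega(T)$ into a pair $(\mathcal{Q},\mu)$ on $X$ and matching the two flavours of abstract entropy. The disintegration underlying this conversion is provided by \cite[Proposition A.11]{LLZ23} behind Lemma \ref{lem:invariant-measure-characterization-correspondence}, and the entropy comparison hinges on Lemma \ref{lem:auxiliary tool}, which is precisely designed for this purpose.
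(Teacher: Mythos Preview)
Your proposal is correct and follows essentially the same route as the paper: the upper bound via $\psi=-\phi+\Ptop(T,\phi)\in\mathcal{C}_T$, the lower bound by lifting to $(\mathcal{O}_\omega(T),\sigma)$ and invoking Theorem~\ref{thm:abstract-variational-principle-single}, the disintegration of the resulting $\sigma$-invariant $\nu$ into a pair $(\mathcal{Q},\mu)$, the entropy comparison through Lemma~\ref{lem:auxiliary tool}, the coboundary argument for (ii), and the appeal to \cite[Theorem~D]{LLZ23} for (iv). The only cosmetic difference is that you extract $(\mathcal{Q},\mu)$ by running the implications (iv)$\Rightarrow$(iii)$\Rightarrow$(ii) in Lemma~\ref{lem:invariant-measure-characterization-correspondence}, whereas the paper cites \cite[Lemma~6.16]{LLZ23} directly; the two are the same construction.
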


\begin{proof}
Fix a continuous function $\phi: \mathcal{O}_2(T) \rightarrow \mathbb{R}$.
Since $\Ptop(\phi-\Ptop(T,\phi))=0$, we know that $\hat{\phi}:=\Ptop(T,\phi)-\phi\in\mathcal{C}_T$. Hence, for any transition probability kernel $\mathcal{Q}$ on $X$ supported by $T$ and any Borel probability measure $\mu$ on $X$, it holds that
\begin{align*}
\mathfrak{h}_\mu (\mathcal{Q})&\leq\int_X \int_{T(x_1)} \hat{\phi} (x_1, x_2) \, d\mathcal{Q}_{x_1}(x_2)\, d\mu (x_1) \\
&=\Ptop(T,\phi)-\int_X \int_{T(x_1)} \phi (x_1, x_2) \, d\mathcal{Q}_{x_1}(x_2)\, d\mu (x_1).
\end{align*}
Consequently,
\[
\Ptop(T, \phi) \geq \sup_{\mathcal{Q}, \mu} \left\{ \mathfrak{h}_\mu (\mathcal{Q}) + \int_X \int_{T(x_1)} \phi (x_1, x_2) \, d\mathcal{Q}_{x_1}(x_2) \, d\mu (x_1) \right\},
\]
where $\mathcal{Q}$ ranges over all transition probability kernels on $X$ supported by $T$, and $\mu$ ranges over all Borel probability measures on $X$. Conversely, for any $\phi\in C(\mathcal{O}_2(T))$, we need to find
a transition probability kernel $\mathcal{Q}$ on $X$ supported by $T$ and a $\mathcal{Q}$-invariant Borel probability measure $\mu$ on $X$ such that
\[
\Ptop(T, \phi) \leq \mathfrak{h}_\mu (\mathcal{Q}) + \int_X \int_{T(x_1)} \phi (x_1, x_2) \, d\mathcal{Q}_{x_1}(x_2) \, d\mu (x_1).
\]
This can be done by means of the corresponding abstract variational principle for the shift map on the orbit space.
To this end, by Theorem \ref{thm:abstract-variational-principle-single}, there exists a $\sigma$-invariant Borel probability measure $\nu$ on
$\mathcal{O}_\omega(T)$ such that
\begin{align*}
\Ptop(\sigma, \tilde{\phi}) \leq \mathfrak{h}_\nu (\sigma) + \int_{\mathcal{O}_\omega(T)}  \tilde{\phi}\, d\nu.
\end{align*}
According to Lemma \cite[Lemma 6.16]{LLZ23}, we can find a transition probability kernel $\mathcal{Q}$ on $X$ supported by $T$ and a $\mathcal{Q}$-invariant Borel probability measure on $X$ satisfying $\nu\circ\tilde{\pi}^{-1}_{12}=\mu\mathcal{Q}^{[1]}$, which together with Lemma \ref{lem:connection of two pressure}, Lemma \ref{lem:a integral equality}, and Lemma \ref{lem:auxiliary tool} implies that
\begin{align*}
\Ptop(T, \phi)=\Ptop(\sigma, \tilde{\phi}) &\leq \mathfrak{h}_\nu (\sigma) + \int_{\mathcal{O}_\omega(T)}  \tilde{\phi}\, d\nu\\
&\leq\mathfrak{h}_\mu (\mathcal{Q}) + \int_{\mathcal{O}_2(T)}\phi\, d(\mu\mathcal{Q}^{[1]})\\
&=\mathfrak{h}_\mu (\mathcal{Q}) +\int_X \int_{T(x_1)} \phi (x_1, x_2) \, d\mathcal{Q}_{x_1}(x_2)\, d\mu (x_1).
\end{align*}
Thus, we have finished the proof of assertion (i).

Suppose $\mathcal{Q}_\phi$ is a transition probability kernels on $X$ supported by $T$, and $\mu_\phi$ a Borel probability measure on $X$ such that the pair $(\mathcal{Q}_\phi,\mu_\phi)$ satisfies
\[
\Ptop(T, \phi)=\mathfrak{h}_\mu (\mathcal{Q}_\phi) +\int_X \int_{T(x_1)} \phi (x_1, x_2) \, d({\mathcal{Q}_\phi})_{x_1}(x_2)\, d\mu_\phi (x_1).
\]
For $\psi\in C(X)$. Combining Lemma \ref{lem:basic properties of topological pressure for correspondences}, Lemma \ref{lem:a integral equality}
and statement (i) yields
\begin{align*}
\mathfrak{h}_\mu (\mathcal{Q}_\phi) +\int_{\mathcal{O}_2(T)}\phi\, d(\mu_\phi\mathcal{Q}_\phi^{[1]})
&=\Ptop(T, \phi)\\
&=\Ptop(T, \phi+\psi\circ \tilde{\pi}_1-\psi\circ \tilde{\pi}_2)\\
&\geq \mathfrak{h}_\mu (\mathcal{Q}_\phi) +\int_{\mathcal{O}_2(T)}(\phi+\psi\circ \tilde{\pi}_1-\psi\circ \tilde{\pi}_2)\, d(\mu_\phi\mathcal{Q}_\phi^{[1]}).
\end{align*}
Hence,
\[
\int_{\mathcal{O}_2(T)}\psi\circ \tilde{\pi}_1\, d(\mu_\phi\mathcal{Q}_\phi^{[1]})
\leq\int_{\mathcal{O}_2(T)}\psi\circ \tilde{\pi}_2\, d(\mu_\phi\mathcal{Q}_\phi^{[1]}).
\]
Furthermore, since (see \cite[Corollarys A.4, A.7 and Lemma 6.13]{LLZ23})
\[
(\mu_\phi\mathcal{Q}_\phi^{[1]})\circ \tilde{\pi}_1^{-1}=\mu_\phi,~
(\mu_\phi\mathcal{Q}_\phi^{[2]})\circ \tilde{\pi}_2^{-1}=\mu_\phi\mathcal{Q}_\phi,
~\text{ and }(\mu_\phi\mathcal{Q}_\phi^{[1]})(\mathcal{O}_2(T))=1,
\]
we obtain
\begin{align*}
\int_X\psi\,d\mu_\phi&=\int_{X^2}\psi\circ \tilde{\pi}_1\, d(\mu_\phi\mathcal{Q}_\phi^{[1]})\\
&=\int_{\mathcal{O}_2(T)}\psi\circ \tilde{\pi}_1\, d(\mu_\phi\mathcal{Q}_\phi^{[1]})\\
&\leq\int_{\mathcal{O}_2(T)}\psi\circ \tilde{\pi}_2\, d(\mu_\phi\mathcal{Q}_\phi^{[1]})\\
&=\int_{X^2}\psi\circ \tilde{\pi}_2\, d(\mu_\phi\mathcal{Q}_\phi^{[1]})\\
&=\int_X\psi\,d(\mu_\phi\mathcal{Q}_\phi).
\end{align*}
Therefore, the inequality
\[
\int_X\psi\,d\mu_\phi\leq \int_X\psi\,d(\mu_\phi\mathcal{Q}_\phi)
\]
holds for all $\psi\in C(X)$. Applying this with $-\psi$ gives the reverse inequality.
Therefore, we get
\[
\int_X\psi\,d\mu_\phi= \int_X\psi\,d(\mu_\phi\mathcal{Q}_\phi),
\]
which implies $\mu_\phi=\mu_\phi\mathcal{Q}_\phi$. Consequently, $\mu_\phi$ is $\mathcal{Q}_\phi$-invariant. This proves (ii).

Next, we will continue with the proof of assertion (iii). For any transition probability kernel $\mathcal{Q}$ on $X$ supported by $T$ and any Borel probability measure $\mu$ on $X$. By statement (i) we get
\[
\mathfrak{h}_\mu (\mathcal{Q}) \leq P(T,\phi) - \int_X \int_{T(x_1)} \phi (x_1, x_2) \, d\mathcal{Q}_{x_1}(x_2) \, d\mu (x_1)
\]
for any $\phi\in C(\mathcal{O}_2(T))$. Therefore,
\begin{align*}
\mathfrak{h}_\mu (\mathcal{Q})
&\leq\inf_{\phi\in C(\mathcal{O}_2(T))} \left\{ \Ptop(T,-\phi) + \int_X \int_{T(x_1)} \phi (x_1, x_2) \, d\mathcal{Q}_{x_1}(x_2) \, d\mu (x_1) \right\}\\
&\leq\inf_{\phi\in \mathcal{C}_T} \left\{ \Ptop(T,-\phi) + \int_X \int_{T(x_1)} \phi (x_1, x_2) \, d\mathcal{Q}_{x_1}(x_2) \, d\mu (x_1) \right\}\\
&\leq\inf_{\phi\in \mathcal{C}_T} \left\{ \int_X \int_{T(x_1)} \phi (x_1, x_2) \, d\mathcal{Q}_{x_1}(x_2) \, d\mu (x_1) \right\}\\
&=\mathfrak{h}_\mu (\mathcal{Q}),
\end{align*}
which immediately implies that
\begin{align*}
\mathfrak{h}_\mu (\mathcal{Q})
&=\inf_{\phi\in C(\mathcal{O}_2(T))} \left\{ \Ptop(T,-\phi) + \int_X \int_{T(x_1)} \phi (x_1, x_2) \, d\mathcal{Q}_{x_1}(x_2) \, d\mu (x_1) \right\}\\
&=\inf_{\phi\in C(\mathcal{O}_2(T))} \left\{ \Ptop(T,\phi) - \int_X \int_{T(x_1)} \phi (x_1, x_2) \, d\mathcal{Q}_{x_1}(x_2) \, d\mu (x_1) \right\}.
\end{align*}
This ends the proof.

(iv) follows immediately from (iii) and \cite[Theorem D]{LLZ23}.
\end{proof}

\begin{defn}\label{defn:abstract measure entropy for correspondences}
Let $T$ be a correspondence on a compact metric space $(X, d)$, $\mu$ be a $T$-invariant Borel probability measure and $\phi: \mathcal{O}_2(T) \rightarrow \mathbb{R}$ be a continuous function. We define the {\it abstract measure-theoretic pressure of $\varphi$ for $\mu$} and the {\it abstract measure-theoretic entropy of $\mu$} as follows:
\[
\mathfrak{P}_\mu (T,\phi)=\sup_{\mathcal{Q}\in \mathcal{K}_\mu}
\left\{\mathfrak{h}_\mu (\mathcal{Q})+ \int_X \int_{T(x_1)} \phi (x_1, x_2) \, d\mathcal{Q}_{x_1}(x_2) \, d\mu (x_1)\right\},
\]
and
\[
\mathfrak{h}_\mu (T)=\sup_{\mathcal{Q}\in \mathcal{K}_\mu}\{\mathfrak{h}_\mu (\mathcal{Q})\}.
\]
\end{defn}

Theorem \ref{thm:type II variational principle} can be reformulated within the framework of Definition \ref{defn:abstract measure entropy for correspondences} as follows:
\begin{thm}
Let $T$ be a correspondence on a compact metric space $(X, d)$ and
$\phi: \mathcal{O}_2(T) \rightarrow \mathbb{R}$ be a continuous function. Then we have
\[
\Ptop(T, \phi) =\max_{\mu\in \mathcal{P}_T(X)} \left\{ \mathfrak{P}_\mu (T,\phi) \right\}.
\]
Especially,
\[
\htop(T) = \max_{\mu\in\mathcal{P}_T(X)} \left\{ \mathfrak{h}_\mu (T) \right\}.
\]
\end{thm}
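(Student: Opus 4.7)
The plan is to obtain this statement as a direct reformulation of Theorem \ref{thm:type II variational principle}, by recognizing the right--hand side as a reindexed double supremum.

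First, I would unfold the definition of $\mathfrak{P}_\mu(T,\phi)$ to rewrite
\[
\max_{\mu \in \mathcal{P}_T(X)} \mathfrak{P}_\mu(T,\phi)
= \sup \left\{ \mathfrak{h}_\mu(\mathcal{Q}) + \int_X \int_{T(x_1)} \phi(x_1,x_2) \, d\mathcal{Q}_{x_1}(x_2) \, d\mu(x_1) \right\},
\]
where the supremum is taken over all pairs $(\mathcal{Q},\mu)$ such that $\mu \in \mathcal{P}_T(X)$, $\mathcal{Q}$ is a transition probability kernel on $X$ supported by $T$, and $\mu\mathcal{Q}=\mu$ (i.e.\ $\mathcal{Q} \in \mathcal{K}_\mu$).

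Next, I would argue that this index set coincides with the one appearing in the second form of Theorem \ref{thm:type II variational principle}(i), namely all pairs $(\mathcal{Q},\mu)$ with $\mathcal{Q}$ supported by $T$ and $\mu$ being $\mathcal{Q}$-invariant. The inclusion ``$\subset$'' is trivial; the reverse is precisely the implication (ii) $\Rightarrow$ (i) of Lemma \ref{lem:invariant-measure-characterization-correspondence}, which guarantees that any Borel probability measure which is $\mathcal{Q}$-invariant for some kernel $\mathcal{Q}$ supported by $T$ is automatically $T$-invariant, hence belongs to $\mathcal{P}_T(X)$.

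With the two index sets identified, Theorem \ref{thm:type II variational principle}(i) yields the equality $\Ptop(T,\phi) = \max_{\mu \in \mathcal{P}_T(X)} \mathfrak{P}_\mu(T,\phi)$, and the assertion that this supremum is attained is inherited from the attainment clause in Theorem \ref{thm:type II variational principle}(i)--(ii): any optimal pair $(\mathcal{Q}^*,\mu^*)$ given by that theorem satisfies $\mu^* \in \mathcal{P}_T(X)$, and then $\mathfrak{P}_{\mu^*}(T,\phi)\ge \mathfrak{h}_{\mu^*}(\mathcal{Q}^*)+\int\!\!\int \phi\,d\mathcal{Q}^*_{x_1}d\mu^*=\Ptop(T,\phi)$, while the reverse inequality holds by construction. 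The entropy statement then follows by specializing to $\phi \equiv 0$ and invoking $\mathfrak{h}_\mu(T)=\mathfrak{P}_\mu(T,0)$.

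There is essentially no obstacle here beyond a careful bookkeeping of the two indexing schemes; the only substantive ingredient is the equivalence (i) $\Leftrightarrow$ (ii) of Lemma \ref{lem:invariant-measure-characterization-correspondence}, which ensures the two optimization domains coincide and lets the two formulations of the variational principle be identified.
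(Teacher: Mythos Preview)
Your proposal is correct and matches the paper's approach: the paper presents this theorem explicitly as a reformulation of Theorem \ref{thm:type II variational principle} via Definition \ref{defn:abstract measure entropy for correspondences}, without giving a separate proof. Your bookkeeping argument, identifying the two index sets through Lemma \ref{lem:invariant-measure-characterization-correspondence}, is exactly the content of that reformulation.
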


Next we explore the behavior of the abstract measure-theoretic pressure (entropy) under topological conjugacy.
\begin{thm}
Let $T$ be a correspondence on a compact metric space $X$, let $S$ be a correspondence on a compact metric space $Y$, let $\mu$ be a $T$-invariant measure, and let $\phi: \mathcal{O}_2(S) \rightarrow \mathbb{R}$ be a continuous function. If $T$ and $S$ are topologically conjugate via a homeomorphism $\theta: X\to Y$, then $\mu\circ\theta^{-1}$ is an $S$-invariant measure and
\[
\mathfrak{P}_\mu(T,\varphi)=\mathfrak{P}_{\mu\circ\theta^{-1}}(S,\phi),
\]
where $\varphi:=\phi\circ\theta^{(2)}|_{\mathcal{O}_2(T)}$. Especially,
\[
\mathfrak{h}_\mu(T)=\mathfrak{h}_{\mu\circ\theta^{-1}}(S).
\]
\end{thm}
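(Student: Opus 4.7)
The plan is to mirror the proof of Theorem \ref{thm:measure pressure under topological conjugacy}, replacing the use of \cite[Lemma 5.28]{LLZ23} (which matched classical measure-theoretic entropies) by a direct comparison of the variational definitions of $\mathfrak{h}_\mu(\mathcal{Q})$ and $\mathfrak{h}_{\mu\circ\theta^{-1}}(\mathcal{L})$. Step~1 of that earlier proof already shows that $\mu\circ\theta^{-1}$ is $S$-invariant and that the assignment $\mathcal{Q}\mapsto\mathcal{L}$ given by $\mathcal{L}(y,B):=\mathcal{Q}(\theta^{-1}(y),\theta^{-1}(B))$ is a bijection between $\mathcal{K}_\mu$ and $\mathcal{K}_{\mu\circ\theta^{-1}}$. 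Step~3 of the same proof produces the crucial pushforward identity
\[
\mathcal{Q}^{[1]}_x\circ(\theta^{(2)})^{-1}=\mathcal{L}^{[1]}_{\theta x},\qquad
(\mu\mathcal{Q}^{[1]})\circ(\theta^{(2)})^{-1}=(\mu\circ\theta^{-1})\mathcal{L}^{[1]},
\]
which is what lets us transfer integrals of functions on $\mathcal{O}_2(T)$ to integrals of the conjugate functions on $\mathcal{O}_2(S)$.

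With this in hand, I would next transport potentials. Given any $\psi'\in C(\mathcal{O}_2(S))$, let $\psi:=\psi'\circ\theta^{(2)}|_{\mathcal{O}_2(T)}\in C(\mathcal{O}_2(T))$; this is a continuous bijection between the two spaces of potentials, and Theorem \ref{thm:topological pressure under topological conjugacy} gives $\Ptop(T,-\psi)=\Ptop(S,-\psi')$, so it restricts to a bijection $\mathcal{C}_T\leftrightarrow\mathcal{C}_S$, where $\mathcal{C}_S:=\{\eta\in C(\mathcal{O}_2(S)):\Ptop(S,-\eta)\leq 0\}$. Combining the pushforward identity with Lemma \ref{lem:a integral equality} yields, for every corresponding pair $\psi\leftrightarrow\psi'$,
\[
\int_X\!\int_{T(x_1)}\!\psi(x_1,x_2)\,d\mathcal{Q}_{x_1}(x_2)\,d\mu(x_1)
=\int_Y\!\int_{S(y_1)}\!\psi'(y_1,y_2)\,d\mathcal{L}_{y_1}(y_2)\,d(\mu\circ\theta^{-1})(y_1).
\]
Taking the infimum over $\psi\in\mathcal{C}_T$ on the left and $\psi'\in\mathcal{C}_S$ on the right — which is legitimate because of the bijection $\mathcal{C}_T\leftrightarrow\mathcal{C}_S$ — gives $\mathfrak{h}_\mu(\mathcal{Q})=\mathfrak{h}_{\mu\circ\theta^{-1}}(\mathcal{L})$.

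Finally, applying the same integral identity to the distinguished pair $\varphi\leftrightarrow\phi$ and adding it to the entropy equality yields
\[
\mathfrak{h}_\mu(\mathcal{Q})+\!\int_X\!\int_{T(x_1)}\!\varphi\,d\mathcal{Q}_{x_1}\,d\mu
=\mathfrak{h}_{\mu\circ\theta^{-1}}(\mathcal{L})+\!\int_Y\!\int_{S(y_1)}\!\phi\,d\mathcal{L}_{y_1}\,d(\mu\circ\theta^{-1}).
\]
Taking the supremum over $\mathcal{Q}\in\mathcal{K}_\mu$ (equivalently $\mathcal{L}\in\mathcal{K}_{\mu\circ\theta^{-1}}$) then produces $\mathfrak{P}_\mu(T,\varphi)=\mathfrak{P}_{\mu\circ\theta^{-1}}(S,\phi)$, and specializing to $\phi\equiv 0$ gives $\mathfrak{h}_\mu(T)=\mathfrak{h}_{\mu\circ\theta^{-1}}(S)$.

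The only nontrivial bookkeeping is verifying that $\mathcal{L}$ is genuinely supported by $S$ and that $\psi'$ is well-defined and continuous on the closed set $\mathcal{O}_2(S)$; both issues are handled exactly as in Theorems \ref{thm:topological pressure under topological conjugacy} and \ref{thm:measure pressure under topological conjugacy}. The main conceptual point — and the step where one must be careful — is the bijection $\mathcal{C}_T\leftrightarrow\mathcal{C}_S$: one needs Theorem \ref{thm:topological pressure under topological conjugacy} not just for the specific potential $\phi$ but simultaneously for every potential in $C(\mathcal{O}_2(S))$, so that both the infimum defining $\mathfrak{h}_\mu(\mathcal{Q})$ and its $Y$-side counterpart range over genuinely matched classes of functions. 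Once that bijection is in place, the variational definition of $\mathfrak{h}_\mu(\mathcal{Q})$ as an infimum plus the integral translation formula reduce the proof to elementary manipulations.
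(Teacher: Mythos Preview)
Your proposal is correct and follows essentially the same route as the paper: both arguments transfer kernels via $\mathcal{L}(y,B):=\mathcal{Q}(\theta^{-1}(y),\theta^{-1}(B))$, transfer potentials via $\psi\leftrightarrow\psi'$ together with Theorem~\ref{thm:topological pressure under topological conjugacy}, and combine the integral identity from Step~3 of Theorem~\ref{thm:measure pressure under topological conjugacy} to match the variational expressions on the two sides. The only cosmetic differences are that the paper computes $\mathfrak{h}_\mu(\mathcal{Q})=\mathfrak{h}_{\mu\circ\theta^{-1}}(\mathcal{L})$ via the inverse variational principle of Theorem~\ref{thm:type II variational principle}(iii) (infimum over all of $C(\mathcal{O}_2(T))$ with a pressure term) rather than your direct use of the definition (infimum over $\mathcal{C}_T$), and the paper proves one inequality for $\mathfrak{P}$ and then invokes symmetry of $\theta$, whereas you state the bijection $\mathcal{K}_\mu\leftrightarrow\mathcal{K}_{\mu\circ\theta^{-1}}$ up front and obtain equality in one stroke.
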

\begin{proof}
It follows from Theorem \ref{thm:measure pressure under topological conjugacy} that $\mu\circ\theta^{-1}$ is $S$-invariant and $\varphi$ is well-defined.

By Theorems \ref{thm:topological pressure under topological conjugacy} and \ref{thm:measure pressure under topological conjugacy}, we have the following claim.

\textbf{Claim.} Let $\psi'\in C(\mathcal{O}_2(T))$ and $\psi\in C(\mathcal{O}_2(S))$
be continuous functions such that $\psi'=\psi\circ\theta^{(2)}|_{\mathcal{O}_2(T)}$. Then we have
\[
\Ptop(T,\psi')=\Ptop(S,\psi).
\]
Furthermore, let $\mathcal{Q}$ be a transition probability kernel on $X$ and $\mathcal{L}$ a transition probability kernel on $Y$. If
\[
\mathcal{Q}(x,A)=\mathcal{L}(\theta(x),\theta(A))~
\text{ for all }x\in X \text{ and } A\in \mathscr{B}(X),
\]
then
\[
\int_X \int_{T(x_1)} \psi' (x_1, x_2) \, d\mathcal{Q}_{x_1}(x_2) \, d\mu (x_1)
=\int_Y \int_{S(y_1)} \psi (y_1, y_2) \, d\mathcal{L}_{y_1}(y_2)
\, d(\mu\circ\theta^{-1}) (x_1).
\]

Note that since $\mu$ is $T$-invariant, there exists a transition probability kernel on $X$ supported by $T$ such that $\mu\mathcal{Q}=\mu$.
Let $\mathcal{L}: Y\times \mathscr{B}(Y)\to [0,1]$ be defined as follow:
for any $y\in Y$ and $B\in \mathscr{B}(Y)$, set
\[
\mathcal{L}(y,B):=\mathcal{Q}(\theta^{-1}(y),\theta^{-1}(B)).
\]
It follows from Theorem \ref{thm:measure pressure under topological conjugacy} that $\mathcal{L}$ is a transition probability kernel on $Y$ supported by $S$ and that $(\mu\circ\theta^{-1})$ is $\mathcal{L}$-invariant.

Therefore, combining Theorem \ref{thm:type II variational principle} and the preceding claim, we obtain
\begin{align*}
\mathfrak{h}_\mu (\mathcal{Q}) &= \inf_{\psi'\in C(\mathcal{O}_2(T))} \left\{ \Ptop(T,\psi') - \int_X \int_{T(x_1)} \psi' (x_1, x_2) \, d\mathcal{Q}_{x_1}(x_2) \, d\mu (x_1) \right\}\\
&=\inf_{\psi\in C(\mathcal{O}_2(S))} \left\{ \Ptop(S,\psi) - \int_Y \int_{S(x_1)} \psi (y_1, y_2) \, d\mathcal{L}_{y_1}(y_2) \, d(\mu\circ\theta^{-1}) (y_1) \right\}\\
&=\mathfrak{h}_{\mu\circ\theta^{-1}} (\mathcal{L}).
\end{align*}
Hence,
\begin{align*}
\mathfrak{P}_\mu (T,\varphi)&=\sup_{\mathcal{Q}\in \mathcal{K}_\mu}
\left\{\mathfrak{h}_\mu (\mathcal{Q})+ \int_X \int_{T(x_1)} \varphi (x_1, x_2) \, d\mathcal{Q}_{x_1}(x_2) \, d\mu (x_1)\right\}\\
&\leq\sup_{\mathcal{L}\in \mathcal{K}_{\mu\circ\theta^{-1}}}
\left\{\mathfrak{h}_{\mu\circ\theta^{-1}}(\mathcal{L})+ \int_X \int_{T(x_1)} \phi (x_1, x_2) \, d\mathcal{L}_{x_1}(x_2) \, d(\mu\circ\theta^{-1}) (x_1)\right\}\\
&=\mathfrak{P}_{\mu\circ\theta^{-1}} (S,\phi).
\end{align*}
Moreover, by symmetry of the conjugacy $\theta$, it holds that
$\mathfrak{P}_\mu(T,\varphi)=\mathfrak{P}_{\mu\circ\theta^{-1}}(S,\phi)$.
\end{proof}

\section{Differentiability and equilibrium states of the topological pressure}

This section focuses on the differentiability of the topological pressure and its associated equilibrium states for correspondences.

\subsection{Differentiability of the topological pressure}
In this subsection, we analyze the differentiability of the topological pressure for correspondences. Let $(X, d)$ be a compact metric space
and $T$ be a correspondence on $(X, d)$.
For any pair of continuous potential functions $\phi, \varphi: \mathcal{O}_2(T) \rightarrow \mathbb{R}$, the convexity property of topological pressure guarantees the existence of the following limits:
\[
d^+\Ptop(T,\phi)(\varphi):=\lim_{t\to0^+}\frac{1}{t}(\Ptop(T,\phi+t\varphi)-\Ptop(T,\phi)),
\]
\[
d^-\Ptop(T,\phi)(\varphi):=\lim_{t\to0^-}\frac{1}{t}(\Ptop(T,\phi+t\varphi)-\Ptop(T,\phi)).
\]
It can be demonstrated that
\begin{enumerate}
  \item[(i)] $d^-\Ptop(T,\phi)(\varphi)=-d^+\Ptop(T,\phi)(-\varphi)$;
  \item[(ii)] $d^-\Ptop(T,\phi)(\varphi)\leq d^+\Ptop(T,\phi)(\varphi)$;
  \item[(iii)] $d^+\Ptop(T,\phi)(\lambda\varphi)= \lambda d^+\Ptop(T,\phi)(\varphi)$ for $\lambda\geq0$;
  \item[(iv)] $d^+\Ptop(T,\phi)(\varphi_1+\varphi_2)\leq d^+\Ptop(T,\phi)(\varphi_1)+d^+\Ptop(T,\phi)(\varphi_2)$.
\end{enumerate}

\begin{defn}
Let $T$ be a correspondence on a compact metric space $(X, d)$ and $\phi: \mathcal{O}_2(T) \rightarrow \mathbb{R}$ be a continuous function.
We call the topological pressure $\Ptop(T,\cdot)$ of $T$ {\it Gateaux differentiable at $\phi$}
if
\[
d\Ptop(T,\phi)(\varphi):=\lim_{t\to0}\frac{1}{t}(\Ptop(T,\phi+t\varphi)-\Ptop(T,\phi))
\]
exists for all $\varphi\in C(\mathcal{O}_2(T))$.
\end{defn}

\begin{rem}
It is obvious to see that the topological pressure of $T$
is Gateaux differentiable at $\phi$ if and only if the following equivalent conditions hold:
\begin{enumerate}
  \item[(i)] the function $t\mapsto \Ptop(T,\phi+t\varphi)$ is differentiable at $t=0$ for all $\varphi\in C(\mathcal{O}_2(T))$;
  \item[(ii)] $d^+\Ptop(T,\phi)(\varphi)=-d^+\Ptop(T,\phi)(-\varphi)$ for all $\varphi\in C(\mathcal{O}_2(T))$;
  \item[(iii)] the functional $\varphi\mapsto d^+\Ptop(T,\phi)(\varphi)$ is linear.
\end{enumerate}
\end{rem}

Next we introduce a related notion.

\begin{defn}
Let $T$ be a correspondence on a compact metric space $(X, d)$ satisfying $\htop(T)<+\infty$ and $\phi: \mathcal{O}_2(T) \rightarrow \mathbb{R}$
be a continuous function.
Let \(\mathcal{Q}\) be a transition probability kernel on \((X, \mathscr{B}(X))\) supported by $T$ and $\mu$ be a Borel probability measure on $(X,\mathscr{B}(X))$.
We call the pair $(\mathcal{Q},\mu)$ a {\it tangent functional} to $\Ptop(T,\cdot)$ at $\phi$ if
\[
\Ptop(T,\phi+\varphi)-\Ptop(T,\phi)\geq\int_{\mathcal{O}_2(T)}\varphi\, d(\mu\mathcal{Q}^{[1]}) \text{ for any } \varphi\in C(\mathcal{O}_2(T)).
\]
Denote by $t_\phi(X,T)$ the collection of all tangent functionals to $\Ptop(T,\cdot)$ at $\phi$.
\end{defn}

Some basic properties of the tangent functionals are collected as follows.
\begin{lem}\label{lem:property of tangent functional}
Let $T$ be a correspondence on a compact metric space $(X, d)$ satisfying $\htop(T)<+\infty$ and $\phi, \varphi: \mathcal{O}_2(T) \rightarrow \mathbb{R}$ be continuous functions.
The following statements hold.
\begin{enumerate}
  \item[(i)] $t_\phi(X,T)\neq\emptyset$;
  \item[(ii)] $(\mathcal{Q},\mu)\in t_\phi(X,T)$ if and only if
  \[
  \Ptop(T,\phi)-\int_{\mathcal{O}_2(T)}\phi\, d(\mu\mathcal{Q}^{[1]})=\inf\left\{\Ptop(T,\psi)-\int_{\mathcal{O}_2(T)}\psi\, d(\mu\mathcal{Q}^{[1]}):\psi\in C(\mathcal{O}_2(T))\right\};
  \]
  \item[(iii)] $d^+\Ptop(T,\phi)(\varphi)=\max\left\{\int_{\mathcal{O}_2(T)}\varphi\, d(\mu\mathcal{Q}^{[1]}):(\mathcal{Q},\mu)\in t_\phi(X,T)\right\}$;
  \item[(iv)] $d^-\Ptop(T,\phi)(\varphi)=\min\left\{\int_{\mathcal{O}_2(T)}\varphi\, d(\mu\mathcal{Q}^{[1]}):(\mathcal{Q},\mu)\in t_\phi(X,T)\right\}$.
\end{enumerate}
\end{lem}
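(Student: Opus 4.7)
The plan is to treat this as a standard Hahn--Banach plus Riesz representation argument, adapted from the classical theory for single-valued maps (cf.\ Walters \cite{Wa82}, Theorem 9.15), with one extra step at the end that uses the disintegration result \cite[Proposition A.11]{LLZ23} to realize the resulting probability measure on $\mathcal{O}_2(T)$ as $\mu\mathcal{Q}^{[1]}$ for some kernel $\mathcal{Q}$ supported by $T$. Throughout, the key observation is that the map
\[
p(\varphi):=\Ptop(T,\phi+\varphi)-\Ptop(T,\phi),\qquad \varphi\in C(\mathcal{O}_2(T)),
\]
is sublinear on $C(\mathcal{O}_2(T))$ by Lemma \ref{lem:basic properties of topological pressure for correspondences}(i),(iii), and is finite (since $\htop(T)<+\infty$ and $\phi$ is bounded). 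Moreover, $p(c)=c$ for constants $c\in\R$ by part (ii) of that lemma.

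For (i), I would first fix any $\varphi_0\in C(\mathcal{O}_2(T))$ and define $L_0(t\varphi_0):=t\,d^+\Ptop(T,\phi)(\varphi_0)$ on the one-dimensional subspace $\R\varphi_0$; convexity of $t\mapsto \Ptop(T,\phi+t\varphi_0)$ ensures $L_0\leq p$ there. By the Hahn--Banach theorem, extend $L_0$ to a linear functional $L$ on $C(\mathcal{O}_2(T))$ with $L\leq p$. From $L(c)\leq p(c)=c$ and $L(-c)\leq p(-c)=-c$ one gets $L(c)=c$, and $\varphi\leq 0$ implies $L(\varphi)\leq p(\varphi)\leq 0$ by monotonicity of $\Ptop$, so $L$ is a positive linear functional with $L(1)=1$. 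The Riesz representation theorem then yields a Borel probability measure $\nu$ on the compact set $\mathcal{O}_2(T)$ with $L(\varphi)=\int\varphi\,d\nu$. Applying \cite[Proposition A.11]{LLZ23} with $M=\mathcal{O}_2(T)$ to the measure $\nu$ (viewed as a Borel probability measure on $X^2$ with $\nu(\mathcal{O}_2(T))=1$), we obtain a transition probability kernel $\mathcal{Q}$ on $X$ supported by $T$ such that $\nu=\mu\mathcal{Q}^{[1]}$, where $\mu:=\nu\circ\tilde{\pi}_1^{-1}$. This pair $(\mathcal{Q},\mu)$ lies in $t_\phi(X,T)$.

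Part (ii) is a direct rewriting: substituting $\psi=\phi+\varphi$ in the tangent inequality $\Ptop(T,\phi+\varphi)-\Ptop(T,\phi)\geq\int\varphi\,d(\mu\mathcal{Q}^{[1]})$ gives
\[
\Ptop(T,\psi)-\int_{\mathcal{O}_2(T)}\psi\,d(\mu\mathcal{Q}^{[1]})\geq \Ptop(T,\phi)-\int_{\mathcal{O}_2(T)}\phi\,d(\mu\mathcal{Q}^{[1]})
\]
for every $\psi\in C(\mathcal{O}_2(T))$, and the reverse implication is immediate. For (iii), the inequality ``$\geq$'' holds because any $(\mathcal{Q},\mu)\in t_\phi(X,T)$ satisfies $\int(t\varphi)\,d(\mu\mathcal{Q}^{[1]})\leq \Ptop(T,\phi+t\varphi)-\Ptop(T,\phi)$, so dividing by $t>0$ and letting $t\to 0^+$ gives $\int\varphi\,d(\mu\mathcal{Q}^{[1]})\leq d^+\Ptop(T,\phi)(\varphi)$. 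For ``$\leq$'', apply the construction in (i) with the chosen $\varphi_0=\varphi$; the resulting tangent functional attains the value $d^+\Ptop(T,\phi)(\varphi)$ exactly. Part (iv) follows immediately from (iii) via the identity $d^-\Ptop(T,\phi)(\varphi)=-d^+\Ptop(T,\phi)(-\varphi)$ noted just before the definition.

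The only genuine obstacle is the passage from the abstract probability measure $\nu$ on $\mathcal{O}_2(T)$ produced by Riesz representation to the structured form $\mu\mathcal{Q}^{[1]}$ required in the definition of tangent functional; without \cite[Proposition A.11]{LLZ23} one would be stuck at the level of measures on $X^2$. Everything else is either convex analysis or a restatement, and the proof will be short once this disintegration step is invoked.
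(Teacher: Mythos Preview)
Your proposal is correct and follows essentially the same approach as the paper: Hahn--Banach to produce a linear functional dominated by the sublinear map $\psi\mapsto\Ptop(T,\phi+\psi)-\Ptop(T,\phi)$, positivity and normalization to identify it via Riesz with a probability measure $\nu$ on $\mathcal{O}_2(T)$, and then \cite[Proposition A.11]{LLZ23} to write $\nu=\mu\mathcal{Q}^{[1]}$. The only cosmetic difference is that you start part (i) already on the one-dimensional subspace $\R\varphi_0$ (anticipating the attainment argument in (iii)), whereas the paper applies Hahn--Banach generically in (i) and reintroduces the one-dimensional extension separately in (iii); the content is identical.
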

\begin{proof}
(i) By the convexity of the continuous linear functional $\psi\mapsto\Ptop(T,\phi+\psi)-\Ptop(T,\phi)$ and the Hahn-Banach theorem, one can find a continuous linear functional $L$ on $C(\mathcal{O}_2(T))$ such that
\[
L(\psi)\leq\Ptop(T,\phi+\psi)-\Ptop(T,\phi)\text{ for any }
\psi\in C(\mathcal{O}_2(T)).
\]
Further applying Riesz representation theorem yields that there exists
a finite signed measure $\nu$ on $\mathcal{O}_2(T)$ such that
\[
L(\psi)=\int_{\mathcal{O}_2(T)}\psi\, d\nu \text{ for any }
\psi\in C(\mathcal{O}_2(T)).
\]

\textbf{Claim.} We prove that $\nu$ is a Borel probability measure on $\mathcal{O}_2(T)$.

For any $\psi\in C(\mathcal{O}_2(T))$ with $\psi\geq0$ and $\epsilon>0$, Lemma \ref{lem:basic properties of topological pressure for correspondences} implies that
\begin{align*}
\int_{X^2}(\psi+\ep)\, d\nu&=L(\psi+\ep)\\
&=-L(-(\psi+\ep))\\
&\geq-(\Ptop(T,\phi-(\psi+\ep)))+\Ptop(T,\phi)\\
&\geq-(\Ptop(T,\phi)-\inf(\psi+\ep))+\Ptop(T,\phi)\\
&\geq\inf(\psi+\ep)\\
&>0.
\end{align*}
Hence, $\nu$ is a finite measure on $\mathcal{O}_2(T)$. Meanwhile, for any $n\in\Z$, we have
\[
n\nu(\mathcal{O}_2(T))=\int_{\mathcal{O}_2(T)}n\, d\nu\leq\Ptop(T,\phi+n)-\Ptop(T,\phi)=n,
\]
which immediately implies $\nu(\mathcal{O}_2(T))=1$. So the claim is true.

By \cite[Proposition A.11]{LLZ23} we can find a Borel probability measure $\mu$ on $X$ and a transition probability kernel \(\mathcal{Q}\) on $X$ supported by $T$ such that $\mu\mathcal{Q}^{[1]}=\nu$. As a result, we have
\[
\int_{\mathcal{O}_2(T)}\psi\, d(\mu\mathcal{Q}^{[1]})\leq\Ptop(T,\phi+\psi)-\Ptop(T,\phi)\text{ for any }
\psi\in C(\mathcal{O}_2(T)).
\]
So $(\mathcal{Q},\mu)\in t_\phi(X,T)$.

(ii) If $(\mathcal{Q},\mu)\in t_\phi(X,T)$, then for any $\psi\in C(\mathcal{O}_2(T))$,
\begin{align*}
\Ptop(T,\phi)-\int_{\mathcal{O}_2(T)}\phi\, d(\mu\mathcal{Q}^{[1]})
&\leq\Ptop(T,\psi)-\int_{\mathcal{O}_2(T)}(\psi-\phi)\, d(\mu\mathcal{Q}^{[1]})-\int_{\mathcal{O}_2(T)}\phi\, d(\mu\mathcal{Q}^{[1]})\\
&=\Ptop(T,\psi)-\int_{\mathcal{O}_2(T)}\psi\, d(\mu\mathcal{Q}^{[1]}),
\end{align*}
which clearly implies the necessity.
Conversely, if
\[
\Ptop(T,\phi)-\int_{\mathcal{O}_2(T)}\phi\, d(\mu\mathcal{Q}^{[1]})=\inf\left\{\Ptop(T,\psi)-\int_{\mathcal{O}_2(T)}\psi\, d(\mu\mathcal{Q}^{[1]}):\psi\in C(\mathcal{O}_2(T))\right\},
\]
then for any $\varphi\in C(\mathcal{O}_2(T))$ we have
\[
\Ptop(T,\phi)-\int_{\mathcal{O}_2(T)}\phi\, d(\mu\mathcal{Q}^{[1]})
\leq\Ptop(T,\phi+\varphi)-\int_{\mathcal{O}_2(T)}(\phi+\varphi)\, d(\mu\mathcal{Q}^{[1]}).
\]
So
\[
\Ptop(T,\phi+\varphi)-\Ptop(T,\phi)\geq\int_{\mathcal{O}_2(T)}\varphi\, d(\mu\mathcal{Q}^{[1]}),
\]
which implies that $(\mathcal{Q},\mu)\in t_\phi(X,T)$.

(iii) If $(\mathcal{Q},\mu)\in t_\phi(X,T)$ then
\[
\int_{\mathcal{O}_2(T)}\varphi\, d(\mu\mathcal{Q}^{[1]})\leq\frac{1}{t}
\left(\Ptop(T,\phi+t\varphi)-\Ptop(T,\phi)\right)
\]
for any $t>0$. Letting $t\to0^+$ gives $\int_{\mathcal{O}_2(T)}\varphi\, d(\mu\mathcal{Q}^{[1]})\leq d^+\Ptop(T,\phi)(\varphi)$.

On the other hand, let $a=d^+\Ptop(T,\phi)(\varphi)$ and consider a linear functional $\gamma$ on $\{t\varphi:t\in\R\}$ defined by $\gamma(t\varphi):=ta$. By the property of $d^+\Ptop(T,\phi)(\varphi)$
we have $\gamma(t\varphi)=td^+\Ptop(T,\phi)(\varphi)
\leq\Ptop(T,\phi+t\varphi)-\Ptop(T,\phi)$. Now adopting a procedure similar to the proof of statement (i), we can choose a transition probability kernel \(\mathcal{Q}\) on $X$ supported by $T$ and a Borel probability measure $\mu$ on $X$ such that $(\mathcal{Q},\mu)\in t_\phi(X,T)$
and
\[
\int_{\mathcal{O}_2(T)}\varphi\, d(\mu\mathcal{Q}^{[1]})
=\gamma(\varphi)=d^+\Ptop(T,\phi)(\varphi).
\]
This ends the proof of statement (iii).

(iv) is a consequence of (iii) and the fact that $d^-\Ptop(T,\phi)(\varphi)=-d^+\Ptop(T,\phi)(-\varphi)$.
\end{proof}

Next, we present a result concerning the differentiability of the topological pressure for correspondences. The following theorem will introduce some new concepts. As these concepts are not directly utilized in our work, no specific definitions are provided in the context. Readers may refer to \cite{LLZ23} for their definitions.

\begin{thm}\label{thm:differentiability of pressure}
Let $T$ be a correspondence on a compact metric space $(X, d)$ satisfying $\htop(T)<+\infty$. The topological pressure of $T$ is Gateaux differentiable at $\phi\in C(\mathcal{O}_2(T))$ if and only if there is a unique tangent functional $(\mathcal{Q},\mu)$ to $\Ptop(T,\cdot)$ at $\phi$ in the sense that the measure $\mu$ is unique and that if there are two tangent functionals $(\mathcal{Q},\mu)$ and $(\mathcal{Q}',\mu)$, then for $\mu$-almost every $x\in X$ and all $A\in \mathscr{B}(X)$, the equality $\mathcal{Q}_x(A)=\mathcal{Q}'_x(A)$ holds.

Moreover, if
one of the following two conditions holds:
\begin{enumerate}
  \item[(i)] $T$ is a forward expansive correspondence with the specification property and
  $\phi, \varphi\in C(\mathcal{O}_2(T))$ are Bowen summable continuous functions;
  \item[(ii)] $T$ is an open, strongly transitive, distance-expanding correspondence on $X$ and
  $\phi, \varphi\in C(\mathcal{O}_2(T))$ are H\"{o}lder continuous functions,
\end{enumerate}
then the topological pressure of $T$ is Gateaux differentiable at $\phi$ and
\[
d\Ptop(T,\phi)(\varphi)
=\int_X \int_{T(x_1)} \varphi (x_1, x_2) \,
d(\mathcal{Q}_\phi)_{x_1}(x_2)\, d\mu_\phi (x_1),
\]
where $(\mathcal{Q}_\phi,\mu_\phi)$ is the unique tangent functional to $\Ptop(T,\cdot)$ at $\phi$. Besides, the function $t\mapsto \Ptop(T,\phi+t\varphi)$ is differentiable on $\R$ and
\[
\frac{d}{dt}\Ptop(T,\phi+t\varphi)=\int_X \int_{T(x_1)} \varphi (x_1, x_2) \,
d(\mathcal{Q}_t)_{x_1}(x_2)\, d\mu_t (x_1),
\]
where $(\mathcal{Q}_t,\mu_t)$ is the unique tangent functional to $\Ptop(T,\cdot)$ at $\phi+t\varphi$.

\end{thm}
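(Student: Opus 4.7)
The plan is to treat the two claims of the theorem separately. For the first claim (equivalence of Gateaux differentiability at $\phi$ with uniqueness of the tangent functional), I will exploit Lemma \ref{lem:property of tangent functional} directly. If $\Ptop(T,\cdot)$ is Gateaux differentiable at $\phi$, then $d^+\Ptop(T,\phi)(\varphi) = d^-\Ptop(T,\phi)(\varphi)$ for every $\varphi \in C(\mathcal{O}_2(T))$, so by parts (iii)--(iv) of Lemma \ref{lem:property of tangent functional} the value $\int_{\mathcal{O}_2(T)}\varphi\,d(\mu\mathcal{Q}^{[1]})$ is independent of the choice of $(\mathcal{Q},\mu) \in t_\phi(X,T)$. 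Ranging $\varphi$ over $C(\mathcal{O}_2(T))$ and invoking the Riesz representation theorem forces the Borel measure $\mu\mathcal{Q}^{[1]}$ on $\mathcal{O}_2(T)$ to be unique. Since $\mu = (\mu\mathcal{Q}^{[1]})\circ \tilde{\pi}_1^{-1}$ by \cite[Corollary A.4]{LLZ23}, the measure $\mu$ is uniquely determined, and the uniqueness of the disintegration (cf.\ \cite[Proposition A.11]{LLZ23}) yields that $\mathcal{Q}_x$ agrees with $\mathcal{Q}'_x$ for $\mu$-almost every $x$ whenever $(\mathcal{Q},\mu),(\mathcal{Q}',\mu) \in t_\phi(X,T)$. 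The converse is immediate from Lemma \ref{lem:property of tangent functional}(iii)--(iv): if $\mu\mathcal{Q}^{[1]}$ is uniquely determined then $d^+\Ptop(T,\phi) = d^-\Ptop(T,\phi)$.

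For the second claim, the key step is to identify the tangent functionals at $\phi$ with equilibrium states for $\phi$. Given $(\mathcal{Q},\mu) \in t_\phi(X,T)$, Lemma \ref{lem:property of tangent functional}(ii) together with the inverse variational principle (Theorem \ref{thm:type II variational principle}(iii)) shows that
\[
\mathfrak{h}_\mu(\mathcal{Q}) = \Ptop(T,\phi) - \int_X \int_{T(x_1)}\phi(x_1,x_2)\,d\mathcal{Q}_{x_1}(x_2)\,d\mu(x_1),
\]
so by Theorem \ref{thm:type II variational principle}(ii) $\mu$ is $\mathcal{Q}$-invariant and the pair achieves the maximum in the abstract variational principle. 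Conversely, any such maximizer lies in $t_\phi(X,T)$ by a direct computation using Theorem \ref{thm:type II variational principle}(i) applied to $\phi + \varphi$. Thus the tangent functionals at $\phi$ are exactly the equilibrium pairs for $\phi$ in the sense of Theorem \ref{thm:type II variational principle}, and uniqueness of the latter is equivalent to uniqueness of the former.

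Next, under hypothesis (i) or (ii), the corresponding thermodynamic formalism developed in \cite{LLZ23} (to be recalled in the subsection on equilibrium states) provides a unique equilibrium state for every Bowen summable (respectively H\"older) potential. Since hypotheses (i) and (ii) are stable under addition of continuous (respectively H\"older) potentials, the potential $\phi + t\varphi$ satisfies the same hypothesis for every $t \in \mathbb{R}$. Applying the equivalence just established, the tangent functional at $\phi + t\varphi$ is unique for each such $t$, whence the first part of the theorem yields Gateaux differentiability at every $\phi + t\varphi$. The explicit derivative formula follows by taking $\varphi$ in Lemma \ref{lem:property of tangent functional}(iii) and using that $d^+\Ptop(T,\phi+t\varphi) = d^-\Ptop(T,\phi+t\varphi)$ collapses the max into the integral against the unique $(\mathcal{Q}_t,\mu_t)$, and then Lemma \ref{lem:a integral equality} rewrites $\int_{\mathcal{O}_2(T)}\varphi\,d(\mu_t\mathcal{Q}_t^{[1]})$ in the double-integral form stated.

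The main obstacle I anticipate is establishing the tangent-functional/equilibrium-state correspondence cleanly: the forward direction uses Lemma \ref{lem:property of tangent functional}(ii) together with the inverse variational principle, but the reverse direction requires care to promote a maximizer $(\mathcal{Q},\mu)$ of the abstract variational principle to a tangent functional, which one does by applying Theorem \ref{thm:type II variational principle}(i) to arbitrary perturbations $\phi+\varphi$ and subtracting. Once this bridge is in place, the differentiability and the derivative formula on all of $\mathbb{R}$ follow from uniqueness of equilibrium states for the appropriate classes of potentials.
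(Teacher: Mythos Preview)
Your treatment of the first claim (Gateaux differentiability $\Leftrightarrow$ uniqueness of the tangent functional) is correct and essentially matches the paper's argument via Lemma~\ref{lem:property of tangent functional}(iii)--(iv) and the disintegration uniqueness from \cite[Proposition~A.11]{LLZ23}.

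For the second claim your route diverges from the paper's, and there is a genuine gap. You correctly establish that $t_\phi(X,T)$ coincides with the set of \emph{type~II} equilibrium states, i.e.\ maximizers of $\mathfrak{h}_\mu(\mathcal{Q}) + \int\phi$ (this is Theorem~\ref{thm:characterization of equilibrium states}(ii) in the paper). But the uniqueness statements in \cite{LLZ23} (Theorems~B and~C there) concern \emph{type~I} equilibrium states, i.e.\ maximizers of $h_\mu(\mathcal{Q}) + \int\phi$. One only knows $e^1_\phi(X,T)\subset t_\phi(X,T)=e^2_\phi(X,T)$ in general, so uniqueness of the type~I equilibrium state does not by itself force $t_\phi(X,T)$ to be a singleton. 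Closing this gap would require showing $h_\mu(\mathcal{Q})=\mathfrak{h}_\mu(\mathcal{Q})$ for all relevant pairs under hypotheses (i)/(ii), which is precisely the open Question posed at the end of the paper.

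The paper sidesteps this issue entirely: it transfers to the shift system $(\mathcal{O}_\omega(T),\sigma)$ via Lemma~\ref{lem:connection of two pressure}, invokes \cite[Propositions~7.10, 7.15]{LLZ23} to verify that $\sigma$ and $\tilde\phi,\tilde\varphi$ satisfy the classical hypotheses, and then applies Walters' differentiability result \cite[Corollary~2]{Wa92} directly to obtain the existence of $\lim_{t\to 0}\frac{1}{t}\bigl(\Ptop(\sigma,\tilde\phi+t\tilde\varphi)-\Ptop(\sigma,\tilde\phi)\bigr)$. Gateaux differentiability of $\Ptop(T,\cdot)$ at $\phi$ then follows, and only \emph{afterwards} does the first part of the theorem yield uniqueness of the tangent functional. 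The derivative formula and the differentiability in $t$ are then read off from Lemma~\ref{lem:property of tangent functional}(iii) and Lemma~\ref{lem:a integral equality}, as you indicate.
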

\begin{proof}
Assume that $t_\phi(X,T)$ is unique in the sense defined above. If
$(\mathcal{Q},\mu), (\mathcal{Q}',\mu)\in t_\phi(X,T)$ then
$\mu\mathcal{Q}^{[1]}=\mu\mathcal{Q}'^{[1]}$.
By Lemma \ref{lem:property of tangent functional}, we have
\[
d^+\Ptop(T,\phi)(\varphi)=d^-\Ptop(T,\phi)(\varphi).
\]
So the topological pressure of $T$
is Gateaux differentiable at $\phi$.

If the set $t_\phi(X,T)$ is not unique, then we can choose two pairs $(\mathcal{Q},\mu), (\mathcal{Q}',\mu')$ in $t_\phi(X,T)$. \cite[Proposition A.11]{LLZ23} guarantees that
$\mu\mathcal{Q}^{[1]}\neq\mu'\mathcal{Q}'^{[1]}$. Then there must be a continuous function
$\varphi\in C(\mathcal{O}_2(T))$ such that $\int_{\mathcal{O}_2(T)}\phi\, d(\mu\mathcal{Q}^{[1]})\neq\int_{\mathcal{O}_2(T)}\phi\, d(\mu'\mathcal{Q}'^{[1]})$.
Now applying Lemma \ref{lem:property of tangent functional} gives
$d^+\Ptop(T,\phi)(\varphi)>d^-\Ptop(T,\phi)(\varphi)$. Thus, the topological pressure of $T$
is not Gateaux differentiable at $\phi$.

Let $T$ be a forward expansive correspondence with the specification property and
$\phi, \varphi\in C(\mathcal{O}_2(T))$ be Bowen summable continuous functions.
According to the discussion of \cite{LLZ23}, the dynamical system $(\mathcal{O}_\omega(T),\sigma)$ is forward expansive and has the specification property, and the continuous functions $\tilde{\phi}, \tilde{\varphi}: \mathcal{O}_\omega(T)\to \R$ are Bowen summable with respect to $\sigma$, where
\[
\tilde{\phi}(x_1,x_2,\ldots)=\phi(x_1,x_2)\text{ and }
\tilde{\varphi}(x_1,x_2,\ldots)=\varphi(x_1,x_2).
\]
By \cite[Proposition 7.10]{LLZ23} and \cite[Corollary 2]{Wa92}, the function $t\mapsto \Ptop(\sigma,g+th)$ is differentiable at $t=0$ for those $g,h\in C(\mathcal{O}_\omega(T))$
which are Bowen summable with respect to $\sigma$. Therefore, the following limit exists:
\begin{align*}
d\Ptop(T,\phi)(\varphi)&=
\lim_{ t\to0}\frac{1}{ t}(\Ptop(T,\phi+t\varphi)-\Ptop(T,\phi))\\
&=\lim_{ t\to0}\frac{1}{ t}(\Ptop(\sigma,\tilde{\phi}+t\tilde{\varphi})
-\Ptop(\sigma,\tilde{\phi})).
\end{align*}
So the topological pressure of $T$ is Gateaux differentiable at $\phi$.
Moreover, based on previous discussion, the set $t_\phi(X,T)$ is unique in the sense defined above. Denote by $(\mathcal{Q}_\phi,\mu_\phi)$ the unique tangent functional to $\Ptop(T,\cdot)$ at $\phi$. By Lemma \ref{lem:property of tangent functional} and Lemma Lemma \ref{lem:a integral equality} we have
\[
d\Ptop(T,\phi)(\varphi)=\int_{\mathcal{O}_2(T)}\varphi\, d(\mu_\phi\mathcal{Q}_\phi^{[1]})
=\int_X \int_{T(x_1)} \varphi (x_1, x_2) \,
d(\mathcal{Q}_\phi)_{x_1}(x_2)\, d\mu_\phi (x_1).
\]

Meanwhile, for any $t\in\R$, we can obtain
\begin{align*}
\frac{d}{dt}\Ptop(T,\phi+t\varphi)&=
\lim_{\Delta t\to0}\frac{1}{\Delta t}(\Ptop(T,\phi+(t+\Delta t)\varphi)-\Ptop(T,\phi+t\varphi))\\
&=d\Ptop(T,\phi+t\varphi)(\varphi)\\
&=\int_X \int_{T(x_1)} \varphi (x_1, x_2) \,
d(\mathcal{Q}_t)_{x_1}(x_2)\, d\mu_t (x_1),
\end{align*}
where $(\mathcal{Q}_t,\mu_t)$ is the unique tangent functional to $\Ptop(T,\cdot)$ at $\phi+t\varphi$.
Hence, the function $t\mapsto \Ptop(T,\phi+t\varphi)$ is differentiable on $\R$.

Another case can be established by combining \cite[Proposition 7.15]{LLZ23} and similar techniques.
\end{proof}

\subsection{Equilibrium states}

In this section, we investigate two types of equilibrium states for correspondences.

\begin{defn}
Let $T$ be a correspondence on a compact metric space $(X, d)$ and
$\phi: \mathcal{O}_2(T) \rightarrow \mathbb{R}$ be a continuous function.

\begin{enumerate}
    \item[(i)] Let \(\mathcal{Q}\) be a transition probability kernel on \((X, \mathscr{B}(X))\) and $\mu$ be a \(\mathcal{Q}\)-invariant Borel probability measure on $(X,\mathscr{B}(X))$. We call the pair $(\mathcal{Q},\mu)$ a {\it type I equilibrium state} for the correspondence $T$ and the potential function $\phi$ if
    \[
    \Ptop(T, \phi) = h_\mu (\mathcal{Q}) + \int_X \int_{T(x_1)} \phi (x_1, x_2) \, d\mathcal{Q}_{x_1}(x_2) \, d\mu (x_1).
    \]
    Denote by $e^1_\phi(X,T)$ the set of type I equilibrium states for the correspondence $T$ and the potential function $\phi$.

    \item[(ii)] Let \(\mathcal{Q}\) be a transition probability kernel on \((X, \mathscr{B}(X))\) and $\mu$ be a Borel probability measure on $(X,\mathscr{B}(X))$. We call the pair $(\mathcal{Q},\mu)$ a {\it type II equilibrium state} for the correspondence $T$ and the potential function $\phi$ if
    \[
    \Ptop(T, \phi) = \mathfrak{h}_\mu (\mathcal{Q}) + \int_X \int_{T(x_1)} \phi (x_1, x_2) \, d\mathcal{Q}_{x_1}(x_2) \, d\mu (x_1).
    \]
    Denote by $e^2_\phi(X,T)$ the set of type II equilibrium states for the correspondence $T$ and the potential function $\phi$.
\end{enumerate}
\end{defn}

\begin{rem}
Some remarks are in order.
\begin{enumerate}
  \item[(i)] It should be noted that the first type of equilibrium state is defined for \(\mathcal{Q}\)-invariant Borel probability measures, while the second type applies to general Borel probability measures.
  \item[(ii)] By \cite[Theorem D]{LLZ23}, we obtain that $(\mathcal{Q},\mu)\in e^1_\phi(X,T)$ if and only if
\[
\Ptop(T, \phi) = \sup_{\mathcal{Q}, \mu} \left\{ h_\mu (\mathcal{Q}) + \int_X \int_{T(x_1)} \phi (x_1, x_2) \, d\mathcal{Q}_{x_1}(x_2) \, d\mu (x_1) \right\},
\]
where $\mathcal{Q}$ ranges over all transition probability kernels on $X$ supported by $T$, and $\mu$ ranges over all $\mathcal{Q}$-invariant Borel probability measures on $X$. Moreover, by Theorem \ref{thm:type II variational principle},
$(\mathcal{Q},\mu)\in e^2_\phi(X,T)$ if and only if
\[
\Ptop(T, \phi) = \max_{\mathcal{Q}, \mu} \left\{ \mathfrak{h}_\mu (\mathcal{Q}) + \int_X \int_{T(x_1)} \phi (x_1, x_2) \, d\mathcal{Q}_{x_1}(x_2) \, d\mu (x_1) \right\},
\]
where $\mathcal{Q}$ ranges over all transition probability kernels on $X$ supported by $T$, and $\mu$ ranges over all Borel probability measures on $X$.
  \item [(iii)] It follows from Theorem \ref{thm:type II variational principle} that the set $e^2_\phi(X,T)$ is nonempty.
\end{enumerate}

\end{rem}

Now we explore the connection between equilibrium states and tangent functionals.

\begin{thm}\label{thm:characterization of equilibrium states}
Let $T$ be a correspondence on a compact metric space $(X, d)$ satisfying $\htop(T)<+\infty$ and
$\phi: \mathcal{O}_2(T) \rightarrow \mathbb{R}$ be a continuous function. Then
\begin{enumerate}
  \item[(i)] $e^1_\phi(X,T)\subset t_\phi(X,T)$.
  \item[(ii)] $e^2_\phi(X,T)= t_\phi(X,T)$.
\end{enumerate}
\end{thm}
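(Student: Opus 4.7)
The plan is to exploit the duality between the pressure function and the abstract measure-theoretic entropy $\mathfrak{h}_\mu(\mathcal{Q})$ already captured by the inverse variational principle in Theorem \ref{thm:type II variational principle}(iii), together with Lemma \ref{lem:property of tangent functional}(ii), which rewrites the tangent-functional condition as an infimum attainment. Both inclusions then reduce to very short convex-analytic computations, and no new technical tool beyond what is already in the excerpt should be required.

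For part (i), I would take $(\mathcal{Q},\mu) \in e^1_\phi(X,T)$ so that $\mu$ is $\mathcal{Q}$-invariant and
\[
\Ptop(T,\phi) = h_\mu(\mathcal{Q}) + \int_{\mathcal{O}_2(T)} \phi \, d(\mu\mathcal{Q}^{[1]}).
\]
For any $\varphi \in C(\mathcal{O}_2(T))$, I would apply the easy (lower-bound) direction of the variational principle from \cite[Theorem D]{LLZ23} (also invoked in the proof of Theorem \ref{thm:type II variational principle}(iv)) to the potential $\phi + \varphi$ against the same pair $(\mathcal{Q},\mu)$. Using Lemma \ref{lem:a integral equality} to identify the double integral with $\int (\phi+\varphi) \, d(\mu\mathcal{Q}^{[1]})$, this yields
\[
\Ptop(T, \phi+\varphi) \ge h_\mu(\mathcal{Q}) + \int_{\mathcal{O}_2(T)} (\phi+\varphi) \, d(\mu\mathcal{Q}^{[1]}) = \Ptop(T,\phi) + \int_{\mathcal{O}_2(T)} \varphi \, d(\mu\mathcal{Q}^{[1]}),
\]
and subtracting gives precisely the defining inequality of $t_\phi(X,T)$.

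For part (ii), the inclusion $e^2_\phi(X,T) \subset t_\phi(X,T)$ is carried out by the same trick, replacing $h_\mu(\mathcal{Q})$ by $\mathfrak{h}_\mu(\mathcal{Q})$ and using Theorem \ref{thm:type II variational principle}(i) in place of Theorem D. For the reverse inclusion, I would fix $(\mathcal{Q},\mu) \in t_\phi(X,T)$ and apply Lemma \ref{lem:property of tangent functional}(ii) to rewrite the tangent condition as the equality
\[
\Ptop(T,\phi) - \int_{\mathcal{O}_2(T)} \phi \, d(\mu\mathcal{Q}^{[1]}) = \inf_{\psi \in C(\mathcal{O}_2(T))} \left\{ \Ptop(T,\psi) - \int_{\mathcal{O}_2(T)} \psi \, d(\mu\mathcal{Q}^{[1]}) \right\}.
\]
Applying Lemma \ref{lem:a integral equality} to rewrite the integrals, the inverse variational principle Theorem \ref{thm:type II variational principle}(iii) identifies the right-hand side with $\mathfrak{h}_\mu(\mathcal{Q})$, giving
\[
\Ptop(T,\phi) = \mathfrak{h}_\mu(\mathcal{Q}) + \int_X \int_{T(x_1)} \phi(x_1,x_2) \, d\mathcal{Q}_{x_1}(x_2) \, d\mu(x_1),
\]
which is exactly the defining equation of $e^2_\phi(X,T)$.

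There is no real obstacle here: every ingredient (the easy direction of the variational principle, Lemma \ref{lem:a integral equality}, Lemma \ref{lem:property of tangent functional}(ii), and Theorem \ref{thm:type II variational principle}(iii)) has already been established. The only point meriting a brief verbal remark in the writeup is that in part (i) we do \emph{not} need to know a priori that $\mu$ satisfies any relation with $\mathcal{Q}^{[1]}$ beyond invariance, since Lemma \ref{lem:a integral equality} already converts the double integral into the required form; and one should note that (i) cannot in general be strengthened to an equality because the abstract entropy $\mathfrak{h}_\mu(\mathcal{Q})$ dominates $h_\mu(\mathcal{Q})$ strictly in general, so there may exist tangent functionals that are not type I equilibrium states.
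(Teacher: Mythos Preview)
Your proposal is correct and follows essentially the same route as the paper: for (i) you use the easy direction of \cite[Theorem~D]{LLZ23} applied at $\phi+\varphi$ together with Lemma~\ref{lem:a integral equality}, and for (ii) you combine Lemma~\ref{lem:property of tangent functional}(ii) with the inverse variational principle Theorem~\ref{thm:type II variational principle}(iii), exactly as the paper does. The only additions in your write-up are the closing remarks about why (i) is only an inclusion, which are accurate but not present in the paper's proof.
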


\begin{proof}
Let $(\mathcal{Q},\mu)\in e^1_\phi(X,T)$. By \cite[Theorem D]{LLZ23} and Lemma \ref{lem:a integral equality}, one has
\begin{align*}
\Ptop(T,\phi+\varphi)-\Ptop(T,\phi)&\geq h_\mu (\mathcal{Q}) + \int_X \int_{T(x_1)} (\phi+\varphi) (x_1, x_2) \, d\mathcal{Q}_{x_1}(x_2) \, d\mu (x_1)\\&-h_\mu (\mathcal{Q}) + \int_X \int_{T(x_1)} \phi (x_1, x_2) \, d\mathcal{Q}_{x_1}(x_2) \, d\mu (x_1)\\
&=\int_X \int_{T(x_1)} \varphi (x_1, x_2) \, d\mathcal{Q}_{x_1}(x_2) \, d\mu (x_1)\\
&=\int_{\mathcal{O}_2(T)}\varphi\, d(\mu\mathcal{Q}^{[1]})
\end{align*}
for any  $\varphi\in C(\mathcal{O}_2(T))$. So we have $(\mathcal{Q},\mu)\in t_\phi(X,T)$.

By combining Theorem \ref{thm:type II variational principle} and a similar approach, we can get $e^2_\phi(X,T)\subset t_\phi(X,T)$. Given $(\mathcal{Q},\mu)\in t_\phi(X,T)$. From Lemma \ref{lem:property of tangent functional} we deduce that
\[
\Ptop(T,\phi)-\int_{\mathcal{O}_2(T)}\phi\, d(\mu\mathcal{Q}^{[1]})=\inf\left\{\Ptop(T,\psi)-\int_{\mathcal{O}_2(T)}\psi\, d(\mu\mathcal{Q}^{[1]}):\psi\in C(\mathcal{O}_2(T))\right\}.
\]
Therefore, applying Theorem \ref{thm:type II variational principle} and Lemma \ref{lem:a integral equality} gives
\begin{align*}
\mathfrak{h}_\mu (\mathcal{Q}) &= \inf_{\psi\in C(\mathcal{O}_2(T))}
\left\{ \Ptop(T,\psi) - \int_X \int_{T(x_1)} \psi (x_1, x_2) \, d\mathcal{Q}_{x_1}(x_2) \, d\mu (x_1) \right\}\\
&=\inf\left\{\Ptop(T,\psi)-\int_{\mathcal{O}_2(T)}\psi\, d(\mu\mathcal{Q}^{[1]}):\psi\in C(\mathcal{O}_2(T))\right\}\\
&=\Ptop(T,\phi)-\int_{\mathcal{O}_2(T)}\phi\, d(\mu\mathcal{Q}^{[1]}).
\end{align*}
So $(\mathcal{Q},\mu)\in e^2_\phi(X,T)$.
\end{proof}

\begin{cor}
Let $T$ be a correspondence on a compact metric space $(X, d)$ satisfying $\htop(T)<+\infty$ and
$\phi: \mathcal{O}_2(T) \rightarrow \mathbb{R}$ be a continuous function. Then there exists a dense subset of $C(\mathcal{O}_2(T))$ such that each member of this subset has a unique type II equilibrium state and has at most
one type I equilibrium state. This uniqueness carries the same meaning as described in Theorem \ref{thm:differentiability of pressure}.
\end{cor}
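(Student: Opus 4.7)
The plan is to combine the convexity and continuity of $\Ptop(T,\cdot)$ with Mazur's theorem on Gateaux differentiability of continuous convex functions on separable Banach spaces, and then cash this out via Theorems \ref{thm:differentiability of pressure} and \ref{thm:characterization of equilibrium states}.

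First I would observe that $C(\mathcal{O}_2(T))$ is a separable Banach space under the sup-norm, since $\mathcal{O}_2(T)$ is a compact metric space. Next, by Lemma \ref{lem:basic properties of topological pressure for correspondences}(i), $|\Ptop(T,\phi)-\Ptop(T,\varphi)|\leq\|\phi-\varphi\|_\infty$, so the map $\phi\mapsto\Ptop(T,\phi)$ is $1$-Lipschitz, and by part (iii) of the same lemma it is convex. Since $\htop(T)<+\infty$, the pressure takes finite real values on all of $C(\mathcal{O}_2(T))$.

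Invoking Mazur's theorem (a continuous convex real-valued function on a separable Banach space is Gateaux differentiable on a dense $G_\delta$ subset of its domain), I would let $\mathcal{D}\subset C(\mathcal{O}_2(T))$ denote the set of continuous functions at which $\Ptop(T,\cdot)$ is Gateaux differentiable. Then $\mathcal{D}$ is dense in $C(\mathcal{O}_2(T))$.

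Finally, I would transfer the conclusion to equilibrium states. Fix $\phi\in\mathcal{D}$. By Theorem \ref{thm:differentiability of pressure}, Gateaux differentiability at $\phi$ is equivalent to uniqueness of the tangent functional in the sense stated there, so $t_\phi(X,T)$ consists of a unique pair $(\mathcal{Q}_\phi,\mu_\phi)$ (uniqueness understood as: the measure $\mu_\phi$ is unique and any two candidate kernels agree $\mu_\phi$-almost everywhere pointwise). By Theorem \ref{thm:characterization of equilibrium states}(ii), $e^2_\phi(X,T)=t_\phi(X,T)$, so $\phi$ admits a unique type II equilibrium state; by Theorem \ref{thm:characterization of equilibrium states}(i), $e^1_\phi(X,T)\subset t_\phi(X,T)$, so $\phi$ admits at most one type I equilibrium state.

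There is no real obstacle here beyond checking that Mazur's theorem applies, which requires only separability of $C(\mathcal{O}_2(T))$ (clear), finiteness of the pressure on the whole space (from $\htop(T)<+\infty$ together with Lemma \ref{lem:basic properties of topological pressure for correspondences}(i)), and convexity plus continuity. The heavy lifting — relating differentiability to uniqueness of tangent functionals, and uniqueness of tangent functionals to uniqueness/non-multiplicity of the two kinds of equilibrium states — has already been done in Theorems \ref{thm:differentiability of pressure} and \ref{thm:characterization of equilibrium states}, so the corollary is essentially a one-line consequence of Mazur plus these two theorems.
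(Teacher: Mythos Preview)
Your proposal is correct and follows essentially the same route as the paper: invoke the Mazur-type result that a continuous convex function on a separable Banach space is Gateaux differentiable (equivalently, has a unique tangent functional) on a dense set, then apply Theorem~\ref{thm:differentiability of pressure} and Theorem~\ref{thm:characterization of equilibrium states}. The paper's own proof is in fact terser than yours, merely citing the dense-uniqueness fact for convex functions and pointing to those two theorems; your version supplies the verification that the hypotheses (separability, finiteness, Lipschitz continuity, convexity) are met.
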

\begin{proof}
Since a convex function on a separated Banach space has a unique tangent functional at a dense set of points. Combining Theorem \ref{thm:characterization of equilibrium states} and the proof of Theorem \ref{thm:differentiability of pressure}, we obtain the result.
\end{proof}

According to Theorem \ref{thm:differentiability of pressure},
if one of the following two conditions holds:
\begin{enumerate}
  \item[(i)] $T$ is a forward expansive correspondence with the specification property;
  \item[(ii)] $T$ is an open, strongly transitive, distance-expanding correspondence on $X$,
\end{enumerate}
then the topological pressure of $T$ is Gateaux differentiable at
$\phi\equiv 0$ and the set $t_0(X,T)$ is unique. By \cite[Theorem B]{LLZ23}, \cite[Theorem C]{LLZ23}, and Theorem \ref{thm:characterization of equilibrium states}, we have
$e^1_0(X,T)=t_0(X,T)=e^2_0(X,T)$. So we have
$h_\mu (\mathcal{Q})=\mathfrak{h}_\mu (\mathcal{Q})=\htop(T)$ for $(\mathcal{Q},\mu)\in t_0(X,T)$. However, the following question is open.

\begin{ques}
If $T$ is a correspondence on a compact metric space $(X, d)$ satisfying either of the above two conditions, does it hold that
\[
h_\mu (\mathcal{Q})=\mathfrak{h}_\mu (\mathcal{Q})
\]
for any transition probability kernel \(\mathcal{Q}\) on \((X, \mathscr{B}(X))\) and any \(\mathcal{Q}\)-invariant Borel probability measure $\mu$ on $(X,\mathscr{B}(X))$.
\end{ques}

\end{document}